\documentclass{scrartcl}
\usepackage[T1]{fontenc}
\usepackage[utf8]{inputenc}
\usepackage{lmodern}
\usepackage{amsmath}
\usepackage{amsfonts}
\usepackage{amssymb}
\usepackage{mathrsfs}
\usepackage{dsfont}
\usepackage{semantic}
\usepackage{extarrows}
\usepackage{stmaryrd}
\usepackage{url}
\usepackage{xspace}
\usepackage{microtype}
\usepackage{graphicx}
\usepackage{cite}
\usepackage{scalerel}
\usepackage{xcolor}
\usepackage{amsthm}
\usepackage{thm-restate}
\usepackage{rotating}
\usepackage{multirow}
\usepackage{ifthen}
\usepackage{bigdelim}
\usepackage{tikz}
\usepackage{marginnote} 

\pgfdeclarelayer{background}
\pgfdeclarelayer{foreground}
\pgfsetlayers{background,main,foreground}

\pgfkeys{%
	/tikz/on layer/.code={
		\pgfonlayer{#1}\begingroup
		\aftergroup\endpgfonlayer
		\aftergroup\endgroup
	},
	/tikz/node on layer/.code={
		\pgfonlayer{#1}\begingroup
		\expandafter\def\expandafter\tikz@node@finish\expandafter{\expandafter\endgroup\expandafter\endpgfonlayer\tikz@node@finish}%
	}
}

\usepackage{mathtools}
\usepackage{enumitem}
\usetikzlibrary{shapes.geometric,hobby,calc} 
\usetikzlibrary{decorations.pathmorphing,decorations.pathreplacing}
\usetikzlibrary{decorations.markings,intersections}
\usetikzlibrary{decorations.text}
\usetikzlibrary{shapes.misc}
\usetikzlibrary{decorations,shapes}
\usetikzlibrary{patterns,intersections,through,backgrounds}
\RequirePackage{etoolbox}
\usepackage{tikzmacros}
\usepackage{bbding}
\usepackage{xfrac}
\usepackage{caption}
\usepackage[subrefformat=parens]{subcaption} 
\usepackage[makeroom]{cancel} 
\usepackage{nicefrac}
\usepackage{algorithm}
\usepackage[noend]{algpseudocode}

\usepackage{hyperref}
\usepackage{cleveref}

\usepackage{macros}

\newif\ifcomment
\commentfalse
\commenttrue

\title{A directed flat wall theorem excluding a crossrow grid}
\DeclareRobustCommand{\authorthing}{Meike Hatzel\thanks{TU Darmstadt, Germany. E-mail: \href{research@meikehatzel.com}{research@meikehatzel.com}. Meike Hatzel's research was supported by the Institute for Basic Science (IBS-R029-C1).}
\and
Ken-ichi Kawarabayashi\thanks{National Institute of Informatics \& The University of Tokyo, Tokyo, Japan, \texttt{k\_keniti@nii.ac.jp}.  Supported by JSPS Kakenhi 26K21777 and JP25K24465, and by JST ASPIRE JPMJAP2302.}
\and
Stephan Kreutzer\thanks{TU Berlin, Germany}
\and
Sebastian Wiederrecht\thanks{School of Computing, KAIST, Daejeon, South Korea. E-mail: \href{wiederrecht@kaist.ac.kr}{wiederrecht@kaist.ac.kr}}}
\author{\authorthing}
\date{}

\begin{document}

\maketitle

\begin{abstract}
    The graph minor project contains the most influential results in recent undirected graph theory research.
    There has been progress in recent years in generalising some of their results to directed graphs, with the directed grid theorem of Kawarabayashi and Kreutzer [STOC~'15] and the directed flat wall theorem of Giannopoulou, Kawarabayashi, Kreutzer, and Kwon [SODA~'22].

    We discuss the two different versions of the existing directed flat wall theorem and their drawbacks.
    Then, we present an alternative directed flat wall theorem that excludes a different digraph as a butterfly minor.
    This new theorem lies \say{in between} the two existing ones and as such does not have either of these drawbacks.
    
    The proof of our flat wall theorem is based on the one by Giannopoulou, Kawarabayashi, Kreutzer, and Kwon~[SODA~'22], which has been adapted by Giannopoulou and Wiederrecht [STOC~'24].
    Here we make further adjustments to match our setting.
\end{abstract}

\section{Introduction}

The graph minor structure theorem is a cornerstone in the graph minor series by Robertson and Seymour.
It is concerned with the question of what the structure of graphs excluding a fixed minor looks like.
If one excludes a planar graph, one obtains a class of graphs of bounded \treewidth~\cite{graphminorsV}.
But the class of grids shows that the \treewidth is not bounded when excluding a non-planar graph.
So, what is the structure of graphs excluding non-planar graphs?
For $K_5,$ this is answered by Wagner's theorem~\cite{wagner1937}, which states that every four-connected graph is planar if and only if it excludes $K_5$ as a minor.
This can equivalently be stated as: every graph excluding $K_5$ can be obtained from the moebius ladder on eight vertices, also called the Wagner graph, and planar graphs by an operation called \emph{small clique sums}.


The graph minor structure theorem~\cite{graphminorsXVII} states that every graph that excludes a non-planar graph $H$ as a minor can be decomposed by a tree decomposition with additional requirements.
The bags of this tree decomposition do not have to have bounded width, but the adhesion, that is, the intersection between two bags, has bounded size.
Additionally, the bags induce subgraphs that can be further decomposed into several 4-connected graphs.
These 4-connected graphs, after deleting a small set of apex vertices, admit a drawing into a surface in which $H$ does not have a drawing without crossings, such that there are only few areas, the \emph{vortices}, that contain crossings.


The structure described by the graph minor structure theorem is parametrically necessary and sufficient.
Not only does every graph that excludes a non-planar graph as a minor have the described structure; additionally, graphs of the described structure do not contain a large clique as a minor.


In directed structure theory, there is an analogue to the grid theorem establishing that the cylindrical grid is an obstruction to directed \treewidth~\cite{kawarabayashi2015directedgridtheorem}.
But the property that a digraph is planar if and only if it is a minor of the cylindrical grid does not hold.
The cylindrical grid does play a role similar to the grid in undirected graphs and thus, the cylindrical wall a similar one to the wall.
However, every planar undirected graph is contained as a minor in an undirected wall.
This is not true for digraphs.
The class of cylindrical walls does not contain all planar digraphs, not even all strongly planar digraphs, as butterfly minors.
In general, highly non-planar digraphs can be of directed \treewidth one, as is witnessed by the class of DAGs.
So, neither does excluding a planar digraph imply small directed \treewidth, nor does even small directed \treewidth imply planarity.

Directed structure theory evolved further in the past few years, introducing directed tangles~\cite{directedtangles2020} and also a directed flat wall theorem~\cite{giannopoulou2020directed}.
However, building a directed structure theorem based on this flat wall theorem leads to difficulties as its properties differ from the undirected flat wall theorem.
The main obstacle to transferring the results leading to the undirected structure theorem into the directed setting is the fact that there is no two-path theorem.
A digraph can be highly connected and still not allow for two disjoint paths between two given pairs of vertices~\cite{Thomassen1991non-2-linked}.
In the undirected case, the two-path theorem yields that certain parts of a graph allow for a drawing without crosses into a closed disk, which makes up a crucial step in the undirected structure theorem.

We present an alternative directed flat wall theorem and provide some intuition why we think it yields a better base for a structure theorem.

\section{Preliminaries}

Additionally, we define \MarginOnly{$[k]$}$[k] \coloneqq \Set{1, \dots, k}$ for all $k \in \N.$
For two sets $X$ and $Y$ and a function $\alpha: X \to Y,$ we often use the shortcut $\Fkt{\alpha}{X'} \coloneqq \Set{\Fkt{\alpha}{x} \mid x \in X'}$ for a subset~$X' \subseteq X.$

\paragraph{Digraphs} A \define{digraph} $D$ has a vertex set \Margin{$\V{D}$} and an edge set \Margin{$\E{D}$}.
Let $e = \Brace{u,v}$ be an edge of a digraph $D,$ then we call $u$ the \define{tail} of $e$ and $v$ the \define{head} of $e.$
Let $v$ be a vertex in $D,$ by \Margin{$\Out{D}{v}$} we denote the set of \define{out-edges} of $v,$ that is edges with $v$ as tail.
The \define{out-degree} of $v$ is defined by \MarginOnly{\resizebox{\marginparwidth}{!}{$\Outdeg{D}{v}$}}$\Outdeg{D}{v} \coloneqq \Abs{\Out{D}{v}}.$
Similarly, by \Margin{$\In{D}{v}$} we denote the set of \define{in-edges} of $v,$ that is edges with $v$ as head, and define the \define{in-degree} of $v$ by \MarginOnly{$\Indeg{D}{v}$}$\Indeg{D}{v} \coloneqq \Abs{\In{D}{v}}.$
We omit the index, if the context clearly provides the digraph $D.$

\paragraph{Paths and linkages} A \defineSorted{(directed) path}{directed path} $P$ of \defineSubindex{length}{directed path@(directed) path} $k$ in a directed graph~$D$ is a sequence of distinct vertices $v_1,\dots,v_{k+1}$ such that $\Brace{v_{i},v_{i+1}} \in \E{D}$ for all $1 \leq i \leq k.$
The vertex $v_1$ is called the \defineSubindex{start-vertex}{directed path@(directed) path} of $P,$ denoted \MarginOnly{$\Start{P}$}$\Start{P},$ while the vertex $v_{k+1}$ is the \defineSubindex{end-vertex}{directed path@(directed) path} of $P,$ denoted \MarginOnly{$\End{P}$}$\End{P}.$
We call $P$ a $\Start{P}$-$\End{P}$-path.
We often identify $P$ with the subgraph $\Brace{\Set{v_1,\dots,v_{k+1}},\Set{\Brace{v_i,v_{i+1}}\mid 1 \leq i \leq k}}.$
Let $X\subseteq\V{D}$ be a set of vertices in $D.$
A directed path $P$ is \defineSubindex{disjoint}{directed path@(directed) path} from $X$ if $\V{P}\cap X = \emptyset.$
It is \defineSubindex{internally disjoint}{directed path@(directed) path} from $X$ if $\V{P}\cap X \subseteq \Set{\Start{P},\End{P}}.$
An \defineSorted{$X$-path}{Xpath} is a directed path $P$ of length at least one that is internally disjoint from $X$ and $\Start{P},\End{P} \in X.$
For a subgraph $D' \subseteq D$ we also write $D'$-path instead of $\V{D'}$-path.
Two paths $P$ and $P'$ are \define{disjoint} if $\V{P} \cap \V{P'} = \emptyset$ and they are \define{internally disjoint} if their only intersections are start- and end-vertices, that is, $\V{P} \cap \V{P'} \subseteq \Set{\Start{P},\Start{P'},\End{P},\End{P'}}.$
A \defineSorted{(directed) cycle}{directed cycle} of length $k$ is a directed path of length $k-1$ such that $\Brace{\End{P},\Start{P}}\in\E{D}.$
A collection $\mathcal{L}$ of pairwise disjoint paths is called a \define{linkage}.
We say $\mathcal{L}$ is an $A$-$B$-linkage if every $L \in \mathcal{L}$ is an $a$-$b$-path for some $a \in A$ and $b \in B.$
We call a collection $\mathcal{L}$ of paths a \define{half-integral linkage} if every vertex of the graph occurs in at most two paths of $\mathcal{L}.$

A digraph~$D$ that is obtained from an undirected graph~$G$ by adding exactly one direction of every edge, that is, $\V{D}=\V{G}$ and for all $\Set{u,v} \in \E{G}$ we have either $\Brace{u,v} \in \E{D}$ or $\Brace{v,u} \in \E{D},$ is called an \define{orientation} of $G.$
A digraph~$D$ is a \defineSorted{(directed) tree}{directed tree} or \define{out-branching} rooted at a vertex $r$ if it is the orientation of an undirected tree $T$ rooted at $r$ such that $\Indeg{D}{v}= 1$ for every $v\in\V{D}\setminus \Set{r}.$

A digraph $D$ is \define{strongly connected} if for every two vertices $x,y \in \V{D}$ there is an $x$-$y$-path and a $y$-$x$-path in $D.$
The digraph $D$ is \define{weakly connected} if $\undirected{D}$ is connected.
A \define{strongly connected component} or \define{strong component} of $D$ is a maximal strongly connected subgraph of $D.$

\paragraph{Butterfly minors and models} In directed graphs, there are several ways to define containment relations that generalise minors in undirected graphs.
The most prominent one in structure theory is the one of \emph{butterfly minors}.
The idea behind the definition is that the contraction of an edge should not create new paths, that is, if there is no path between two vertices before the contraction, then there is no such path after the contraction either.
To this end, we call an edge $e$ \define{butterfly contractible} if $e$ is the only out-edge of its tail $u$ or the only in-edge of its head $v.$
A digraph is a \define{butterfly minor} of $D$ if it can be obtained from a subgraph of $D$ by contracting butterfly contractable edges.

An alternative way to describe this concept is via minor models.
An \define{in-branching} is the \define{reverse graph} (obtained by reversing the direction of all edges) of an out-branching, and an \define{in-out-branching} is obtained by identifying the root of an in-branching and the root of an out-branching.
A \define{butterfly (minor) model} of a digraph $D'$ in a digraph $D$ is a function $\mu$ mapping every vertex of $D'$ to a subgraph of $D$ and every edge of $D'$ to an edge of $D$ such that
\begin{enumerate}
	\item $\Fkt{\mu}{v}$ is an in-out-branching for every $v \in \V{D'},$
	\item $\Fkt{\mu}{v}$ is disjoint from $\Fkt{\mu}{u}$ for distinct $u,v \in \V{D'},$
	\item $\Fkt{\mu}{e} \in \E{D}$ for all $e \in \E{D'},$
	\item $\Fkt{\mu}{e} \neq \Fkt{\mu}{e'}$ for distinct $e,e' \in \E{D'},$ and
	\item if $e=\Brace{u,v} \in \E{D'},$ then $\Start{\Fkt{\mu}{e}}$ lies in the out-branching of $\Fkt{\mu}{u}$ and $\End{\Fkt{\mu}{e}}$ lies in the in-branching of $\Fkt{\mu}{v}.$
\end{enumerate}

\paragraph{Directed separations}
For two vertex subsets $A$ and $B$ in a digraph $D$ the tuple $\Separation{A}{B}{}{}$ is a \define{directed separation} if $A\cup B = \Fkt{V}{D}$ and there are no edges with tail in $A\setminus B$ and head in $B\setminus A$ or no edges with tail in $B \setminus A$ and head in $A\setminus B.$
The set $S\coloneqq A\cap B$ is called the \defineSubindex{separator}{directed separation} and $\Abs{S}$ is the \define{order} of the separation.
In case there are no edges with tail in $A\setminus B$ and head in $B\setminus A,$ we write $\Separation{A}{B}{}{l}$ indicating that edges are allowed to go from $B\setminus A$ to $A\setminus B.$
Similarly, we write \Margin{$\Separation{A}{B}{}{r}$} if no edge in $D$ has its tail in $B\setminus A$ and its head in $B\setminus A.$

\paragraph{Directed \treewidth}
\label{sec:dtw}
As a directed analogue to \treewidth Reed~\cite{reed1999dtw} and Johnson, Robertson, Seymour and Thomas~\cite{johnson2001directedTreeWidth} introduced the concept of directed \treewidth and conjectured that a directed version of the grid theorem holds for this directed width measure and a directed version of a grid.

For an arborescence, or directed tree, $T,$\MarginOnly{$\remEdgeLower{T}{e}$} the removal of an edge $e = \Brace{t_1,t_2}$ splits $T$ into two sub-arborescence: $\remEdgeLower{T}{e}$ containing $t_2$ and \Margin{$\remEdgeUpper{T}{e}$} containing $t_1$ as well as the root of $T.$
We also write \Margin{$T_d$} for the sub-arborescence rooted at the vertex $d.$

\begin{restatable}[Directed \treewidth]{definitionx}{DirectedTreewidthDefinition}
	\label{def:dtw}
	A \define{directed tree decomposition} of a digraph~$D$ is a triple
	$\Brace{T,\beta,\gamma}$ where $T$ is a directed tree, $\beta: \V{T} \to 2^{\V{D}}$ maps every vertex $t$ of $T$ to a set
	$\Fkt{\beta}{t} \subseteq \V{D}$ called the \defineSubindex{bag}{directed tree decomposition} \emph{at $t$} and $\gamma: \E{T} \to 2^{\V{D}}$ maps every edge $e$ of $T$ to a set $\Fkt{\gamma}{e} \subseteq \V{D}$ called the \defineSubindex{guard}{directed tree decomposition} \emph{at $e$} such that the following hold
	\begin{enumerate}
		\item $\Set{\Fkt{\beta}{t} \mid t \in \V{T}}$ is a partition of $\V{D}$ (with possibly empty classes), and
		\item for all $e \in \E{T}$ there is no closed walk in $D-\gamma(e)$ containing a vertex of $\Fkt{\beta}{\remEdgeLower{T}{e}}$ and a vertex of $V(D)- \Fkt{\beta}{\remEdgeLower{T}{e}}$.
	\end{enumerate}
	For every vertex~$t \in \V{T}$ we define $\Fkt{\Gamma}{t} \coloneqq \Fkt{\beta}{t} \cup \bigcup_{\incident{e}{t}} \Fkt{\gamma}{e}.$
	The \defineSubindex{width}{directed tree decomposition} of a directed tree decomposition is defined by $\max \Set{\Abs{\Fkt{\Gamma}{t}} \mid t \in  \V{T}}.$
	The directed \treewidth of $D,$ denoted $\dtw{D},$ is defined as the smallest $k \in \N$ such that $D$ has a directed tree decomposition of width	$k.$ 
\end{restatable}

A \define{cylindrical grid} $\CylGrid{k}$ of order $k$ consists of $k$ concentric directed cycles and $2k$ paths connecting the cycles in alternating directions, see \cref{fig:cyl_grid} for an example, as follows.
The $k$ directed disjoint cycles $C_1,\dots,C_k$ have length $2k$ each.
The cycle $C_i$ has the vertex set $\Set{v^i_1,\dots,v^i_{2k}}$ with the natural cyclic ordering.
The paths are of two different kinds, we have the \emph{in-paths} $\inPath_1,\dots,\inPath_k$ and the \emph{out-paths} $\outPath_1,\dots,\outPath_k$ as follows.

\begin{align*}
	\inPath_j &= v^k_{2j},v^{k-1}_{2j},\dots,v^2_{2j},v^1_{2j} \text{ for all } j \in \Set{1,\dots,k}\\
	\outPath_j &= v^1_{j},v^{2}_{j},\dots,v^{k-1}_{j},v^k_{j} \text{ for all } j \in \Set{1,\dots,k}.
\end{align*}
The paths $\inPath_i$ and $\outPath_i$ together build the $i$-th \emph{row} of $W,$ which we also denote $\row_i.$

\begin{figure}[ht]
	\centering
	\resizebox{0.6\textwidth}{!}{%
	\begin{tikzpicture}[scale=0.8]
		\CylindricalGrid{6}{1}{myGreen}{myRed}
	\end{tikzpicture}}
	\caption{A cylindrical grid of order 6 with \textcolor{myGreen}{in-paths} and \textcolor{myRed}{out-paths}.}
	\label{fig:cyl_grid}
\end{figure}

After being open for over 15 years this conjecture was finally confirmed to be true by Kawarabayashi and Kreutzer~\cite{kawarabayashi2015directedgridtheorem}.

\begin{theorem}[Kawarabayashi and Kreutzer~\cite{kawarabayashi2015directedgridtheorem}]
	\label{thm:directed_grid_theorem}
	There is a function $\DirectedGridThmFunction{} : \N \to \N$ such that every digraph $D$ either satisfies $\dtw{D}\leq\ k,$ or contains the cylindrical grid of order $\DirectedGridThmFunction{k}$ as a butterfly minor.
\end{theorem}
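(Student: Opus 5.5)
Since this is the Directed Grid Theorem of Kawarabayashi and Kreutzer, the plan follows the strategy of their original argument rather than building anything new from the preliminaries. We argue by contraposition. Suppose $\dtw{D} > k$. The first step is to turn large directed \treewidth into a \emph{highly linked set}, that is, a bramble-like obstruction. Via the duality between directed tree decompositions and havens or brambles (Johnson, Robertson, Seymour, Thomas), failure to have width at most $k$ yields a set $W$ of size $\Theta(k)$. This set is well linked in the directed sense: for all $A,B\subseteq W$ with $\Abs{A}=\Abs{B}$ there are $\Abs{A}$ vertex-disjoint $A$-$B$-paths. We prove this by the standard separator game: if no such $W$ existed, we could greedily build a directed tree decomposition of width $O(k)$, guarding each strong component split off by a small separator.

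The second step extracts a large \emph{well-linked path system}. Using Menger's theorem repeatedly inside the well-linked set, we find a long linkage $\mathcal{P}$ of disjoint paths whose endpoints are mutually well linked. We then find a \emph{web}: two large linkages $\mathcal{P},\mathcal{Q}$ in which every path of $\mathcal{Q}$ meets every path of $\mathcal{P}$. Concretely, among all linkages from the end set back to the start set we pick one minimising the number of edges outside $\bigcup\mathcal{P}$. We then apply a Ramsey-type thinning so that each $Q\in\mathcal{Q}$ intersects the $P\in\mathcal{P}$ in a consistent pattern. The key lemma here is that in a well-linked system either such a web exists or there is a small hitting set, which would contradict well-linkedness.

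The third step, which I expect to be the main obstacle, turns the web into a \emph{fence}. A fence consists of disjoint paths $P_1,\dots,P_r$, all traversed in the same direction, and alternating paths $Q_1,\dots,Q_r$ that cross them in order, in the style of a planar grid. The difficulty is that the intersection patterns in a web may be arbitrarily tangled, and there is no two-paths theorem to untangle them. To handle this, we would use an iterated rerouting argument. We take a minimal counterexample in terms of the number of edges of $\bigcup\mathcal{Q}$ outside $\bigcup\mathcal{P}$, and we use Erd\H{o}s--Szekeres to find monotone subsequences of crossing orders. In each round we either reroute $Q$-paths along segments of $P$-paths to reduce crossings, or find the required ordered structure. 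This loses tower-type factors, which is acceptable since we only claim existence of \DirectedGridThmFunction{}. Finally, the fence is closed into a cylindrical grid. Using well-linkedness of the endpoints once more, we link the bottom of the fence back to its top by a linkage that avoids most of the fence. Then a second Ramsey and rerouting step produces $k'$ concentric cycles crossed alternately by in- and out-paths. Contracting the resulting subpaths, which are butterfly contractible because each relevant vertex has degree pattern in/out one along the grid, yields $\CylGrid{\DirectedGridThmFunction{k}}$ as a butterfly minor.
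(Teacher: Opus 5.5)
The paper gives no proof of this theorem; it only cites Kawarabayashi and Kreutzer, so the comparison has to be with their argument. Your roadmap (well-linked set, path system, web, fence, cylindrical grid) is the one they follow. However, your write-up has a genuine gap exactly at the step where their proof does its real work.

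You claim that, by well-linkedness, the bottom of the fence can be linked back to its top ``by a linkage that avoids most of the fence''. Well-linkedness only guarantees \emph{some} bottom-to-top linkage, and that linkage may run through the fence in an arbitrary way. Nothing guarantees that the digraph minus the fence (or minus most of it) contains any bottom-to-top path at all, so Menger's theorem gives you nothing here. A fence together with such an intersecting back-linkage only yields a cylindrical grid in which vertices may be used twice. This is, roughly, how the earlier half-integral grid theorem of Kawarabayashi, Kobayashi and Kreutzer arises. To get an integral cylindrical grid you must either untangle the back-linkage from the fence or show that it avoids a large subfence. That is the long and technically central part of the Kawarabayashi--Kreutzer argument. ``A second Ramsey and rerouting step'' is not an argument for it, and this is the missing idea.

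Relatedly, you locate the main obstacle in the passage from the web to the fence, which is not where the main difficulty of their proof lies. The intermediate steps you describe are plausible but only asserted: the ``key lemma'' producing a web, and the minimal-counterexample rerouting producing a fence. So what you have is an outline of the known proof rather than a proof. That is acceptable only if, as the paper does, you are simply invoking the theorem.

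As a side remark, read literally the statement is already satisfied by a bounded function. Under the paper's definition every acyclic digraph has directed tree-width at most one, so directed tree-width larger than one forces a directed cycle, and hence a cylindrical grid of order one as a butterfly minor. The content you are rightly aiming for is that the function can be chosen unbounded.
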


While the function they provide is exponential, there is a polynomial bound for planar directed graphs~\cite{2019planardirectedgridtheorem}.
Additionally, Campos, Lopes, Maia, and Sau~\cite{2022FPTdirectedgridtheorem} adapted the result into an FPT algorithm.

\section{The directed wall}
\label{sec:old_flat_wall}

Let us take a closer look at the existing directed flat wall theorem by Giannopoulou, Kawarabayashi, Kreutzer, and Kwon~\cite{giannopoulou2020directed}.
In order to do so, we have to introduce how a wall in the directed setting is defined alongside some additional related definitions.

An \define{elementary cylindrical wall} of order $k$ is a graph, which is the union of $k$ directed disjoint cycles $C_1,\dots, C_k$ of length $4k$ each and $2k$ disjoint paths.
The cycle $C_i$ has the vertex set $\Set{v^i_1,\dots,v^i_{4k}}$ with the natural cyclic ordering.
The paths are of two different kinds, we have the \define{in-paths} $\inPath_1,\dots,\inPath_k$ and the \define{out-paths} $\outPath_1,\dots,\outPath_k$ as follows, also see~\cref{fig:wall} for an illustration.
\begin{align*}
	\inPath_j &= v^k_{4j-1},v^k_{4j},v^{k-1}_{4j-1},v^{k-1}_{4j},\dots,v^1_{4j-1},v^1_{4j} \text{ for all } j \in [k]\\
	\outPath_j &= v^1_{4(j-1)+1},v^1_{4(j-1)+2},v^{2}_{4(j-1)+1},v^{2}_{4(j-1)+2},\dots,v^k_{4(j-1)+1},v^k_{4(j-1)+2}\\
	& \text{ for all } j \in [k].
\end{align*}

\begin{figure}[!ht]
	\centering
	\begin{tikzpicture}[scale=0.5, decoration={
			markings,
			mark=at position 0.75 with {\arrow{latex}}}]
		\tikzstyle{w}=[circle,draw,fill=black!50,inner sep=0pt,minimum width=3pt]

		\foreach \y in {1, 2, 3, 4}{
			\node at (-1, 8-2*\y+1) {$\outPath_{\y}$};
			\node at (-1, 8-2*\y) {$\inPath_{\y}$};
		}
		
		\foreach \x in {0, 2, 4}{
			\foreach \y in {1,3,5,7}{
				\draw[postaction={decorate}] (\x, \y+1)-- (\x, \y);	
				\draw[postaction={decorate}] (\x+1, \y)-- (\x+1, \y-1);	
				
				\draw[postaction={decorate}] (\x, \y)-- (\x+1, \y);	
				\draw[postaction={decorate}] (\x+1, \y)-- (\x+2, \y);	
				
				\draw[postaction={decorate}] (\x+1, \y-1)-- (\x, \y-1);	
				\draw[postaction={decorate}] (\x+2, \y-1)-- (\x+1, \y-1);	
				
			}
		}
		\foreach \x in {0, 6}{
			\foreach \y in {1,3,5,7}{
				
				\draw[very thick, postaction={decorate}] (\x, \y+1)-- (\x, \y);	
				\draw[very thick, postaction={decorate}] (\x+1, \y)-- (\x+1, \y-1);	
				
				\draw[very thick, postaction={decorate}] (\x, \y)-- (\x+1, \y);	
				
				\draw[very thick, postaction={decorate}] (\x+1, \y-1)-- (\x, \y-1);	
			}
		}
	
		\foreach \i in {0,...,7}
		{
			\foreach \j in {0,...,7}
			{
				\node[vertex,scale=0.5] at (\i,\j) {};
			}
		}

		\foreach \x/\d in {1/thick,2/thin,3/thin,4/thick}{
			\draw[\d,-] (0+2*\x-2, 0) .. controls (0+2*\x-2, -2.5+0.5*\x) ..  (7, -2.5+0.5*\x);
			\draw[\d,-latex] (7, -2.5+0.5*\x) .. controls (12-\x, -2.5+0.5*\x) and (12-\x, 0)  .. (12-\x, 4) ;
			\draw[\d,-] (12-\x, 4)  .. controls (12-\x, 8) and (8+2.5-0.5*\x, 11-0.5*\x)  ..  (7, 11-0.5*\x);
			\draw[\d,-] (7, 11-0.5*\x) .. controls (0+2*\x-2, 11-0.5*\x) ..  (0+2*\x-2, 8) ;
			
			\pgfmathtruncatemacro\y{5-\x}
			\node[anchor=south east] at (8-2*\x, 8) {$C_{\y}$};
			
		}

	\end{tikzpicture}
	\caption{The cylindrical wall of order four.
		The perimeters are depicted using thick edges.}
	\label{fig:wall}
\end{figure}

The paths $\inPath_i$ and $\outPath_i$ together make up the $i$-th \define{row} of $W,$ which we also denote $\row_i.$
We consider rows modulo the wall size, that is, for $\ell > k$ we consider $\inPath_{\ell}\coloneqq \inPath_{\Brace{\ell -1}\bmod k +1},$ $\outPath_{\ell}\coloneqq \outPath_{\Brace{\ell -1}\bmod k +1}$ as well as $\row_{\ell}\coloneqq \row_{\Brace{\ell -1}\bmod k +1}.$
The \define{perimeter} of $W,$ denoted $\per{W},$ contains all vertices of $C_1$ and $C_k.$
The vertices of $C_1$ we also refer to as the \define{inner perimeter}, $\perIn{W},$ and we call the vertices of $C_k$ the \define{outer perimeter}, $\perOut{W}.$
The graph that is obtained from $W$ by removing the perimeter (and the edges incident to its vertices) is called the \defineSubindex{interior}{elementary cylindrical wall} of the wall, we write $\interior{W}.$
A \define{subwall} $W[i,j]$ of $W$ for $1 \leq i < j \leq k$ is the graph obtained by the union of the cycles $C_i,\dots,C_j$ and the subpaths $\Subpath{\inPath_x}{C_i}{C_j}$ and $\Subpath{\outPath_x}{C_i}{C_j}$ for all $1 \leq x \leq k.$
We sometimes use $Q_1, \dots,Q_k$ to refer to the subpaths of $C_1,\dots,C_k$ starting on $\outPath_1$ and ending on $\inPath_k.$

A \defineSorted{(cylindrical) wall}{cylindrical wall}\index{wall} is a subdivision of an elementary cylindrical wall.
The \define{branch vertices} of a wall are the vertices that have out- or in-degree two.
Let $W$ be a cylindrical wall of order $k.$
We say that $W$ \define{grasps} a butterfly minor model $\mu$ of $\K{t}$ if for every $v\in\V{\K{t}}$ there exists a pair $i_v,j_v \in [k]$ such that $\V{Q_{i_v}}\cap\V{\inPath_{j_v}}\subseteq \V{\Fkt{\mu}{v}}$ or $\V{Q_{i_v}}\cap\V{\outPath_{j_v}}\subseteq \V{\Fkt{\mu}{v}},$ that is, the model of every vertex contains a branch vertex of $W.$

\begin{restatable}[Strip]{definition}{StripDefinition}
	\label{def:strip}
	A \define{strip} between column $i$ and $j$ of a wall $W,$ for $1 \leq i < j \leq k,$ is the subgraph of $W$ containing all rows $\row_i,\dots,\row_j$ and the subpaths $\Subpath{Q_1}{\row_i}{\row_j},\dots,\Subpath{Q_k}{\row_i}{\row_j}.$
	The \defineSubindex{height}{strip} of such a strip is $j-i+1.$
\end{restatable}

\begin{figure}[!ht]
	\centering
	\resizebox{0.8\textwidth}{!}{%
    \includegraphics{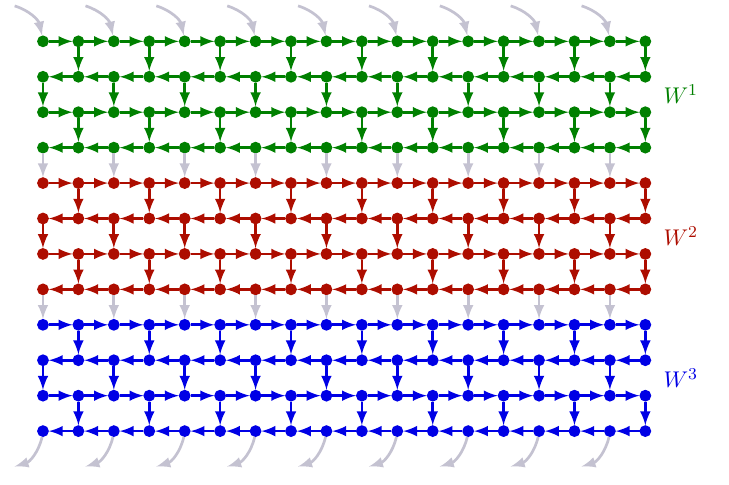}
    }
	\caption{The wall divided into three strips $W^1,$ $W^2$ and $W^3.$}
	\label{fig:strips}
\end{figure}

The boundaries of the faces of a wall, except for the two faces containing $C_1$ and $C_k,$ are called \define{bricks}.

\begin{restatable}[Slice]{definition}{SliceDefinition}
	\label{def:slice}
	Let $k \in \N$ be a positive integer and $W$ be a cylindrical wall of order $k.$
	A \define{slice} $W'$ of $W$ is a cylindrical wall containing the vertical paths $Q_i, \dots,$ $Q_{i+\ell}$ for all $i \in [k]$ and some $\ell \in [k-i],$ and the horizontal paths $\InducedSubgraph{P_1^1}{Q_i, \dots, Q_{i+\ell}},$ $\dots,$ $\InducedSubgraph{P_k^2}{Q_i, \dots, Q_{i+\ell}}.$
	We say that $W'$ is the \emph{slice of $W$ between $Q_i$ and $Q_{i+\ell}$} and that $W'$ is of \defineSubindex{width}{slice} $\ell+1.$
\end{restatable}

\begin{figure}[!ht]
	\centering
	\resizebox{0.8\textwidth}{!}{%
    \includegraphics{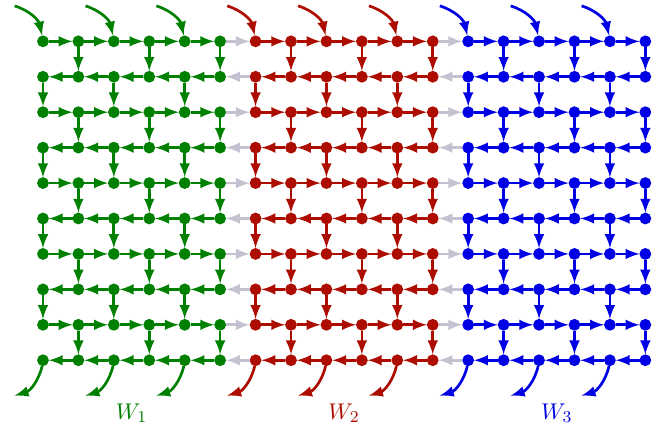}
    }
	\caption{The wall divided into three slices $W_1,$ $W_2$ and $W_3.$}
	\label{fig:slices}
\end{figure}

We define a distance measure with respect to a cylindrical wall based on how many paths lie between two given vertices.

\begin{definition}[$W$-Distance]
	\label{def:Wdistance}
	Let $k \in \N$ be a positive integer and $W$ be a cylindrical wall of order $k.$
	Given two vertices $u,v\in\V{W},$ we say that they have \defineSorted{$W$-distance}{Wdistance} at least $i$ if there exist $i$ distinct vertical or $i$ distinct horizontal paths whose removal separates $u$ and $v$ in $W.$
\end{definition}

Most of these definitions can naturally be used for cylindrical grids as well and we do so at times.
Cylindrical grids and walls are closely related.
From a wall one can obtain a cylindrical grid of the same order by butterfly contraction.
From a cylindrical grid one can obtain a wall by deleting subpaths of the in- and out-paths alternatingly, thereby loosing a factor two in the order.
Therefore, we can obtain the following theorem using the function \Margin{$\DirectedWallThmFunction{}$}~$:~\N \to \N$ obtained from the function $\DirectedGridThmFunction{}$ provided by \cref{thm:directed_grid_theorem}:
\begin{equation*}
	\DirectedWallThmFunction{k} \coloneqq 2\DirectedGridThmFunction{k}.
\end{equation*}

\begin{theorem}[Kawarabayashi and Kreutzer, 2015 \cite{kawarabayashi2015directedgridtheorem}]
	\label{thm:directed_wall_theorem}
	There is a function $\DirectedWallThmFunction{} : \N \to \N$ such that every digraph $D$ either satisfies $\dtw{D}\leq\ k,$ or contains the cylindrical wall of order $\DirectedWallThmFunction{k}$ as a butterfly minor.
\end{theorem}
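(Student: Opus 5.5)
The plan is to derive this statement directly from \cref{thm:directed_grid_theorem}, using the relation between cylindrical grids and cylindrical walls noted just before it. Let $D$ be a digraph with $\dtw{D} > k$. By \cref{thm:directed_grid_theorem}, $D$ contains the cylindrical grid $\CylGrid{\DirectedGridThmFunction{k}}$ as a butterfly minor. Since the butterfly minor relation is transitive, it suffices to show that a cylindrical grid of order $2m$ contains an elementary cylindrical wall of order $m$ as a butterfly minor (or as a subgraph of a subdivision). We then apply this with the order bookkeeping chosen so that the resulting wall has order $\DirectedWallThmFunction{k}$. The exact dependency between $m$ and $\DirectedGridThmFunction{k}$ will be chosen to match the definition $\DirectedWallThmFunction{k}=2\DirectedGridThmFunction{k}$.

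\textbf{Step 1: transitivity.} Composing models is routine. Given a model $\mu$ of $H$ in $D$ and a model $\nu$ of $H'$ in $H$, we map each $v\in\V{H'}$ to the union of $\mu(u)$ over $u\in\V{\nu(v)}$, together with the images $\mu(e)$ of the edges $e$ of $\nu(v)$. The in-out-branching structure is preserved because the edges of $\nu(v)$ join the out-branching of one $\mu$-image to the in-branching of the next. Equivalently, one can argue with the subgraph-plus-contraction definition.

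\textbf{Step 2: embedding a wall in a grid.} Take the cylindrical grid of order $2m$ with cycles $C_1,\dots,C_{2m}$ and radial in- and out-paths alternating around the cycles. To form the wall, I would keep every cycle. Each radial path is cut into consecutive segments between successive cycles, and these segments are deleted alternately, with the pattern staggered between neighbouring paths. The result is the brick pattern of the wall: each cycle vertex keeps degree at most three, and each radial segment remaining between $C_i$ and $C_{i+1}$ becomes a wall edge from a vertex of $C_i$ to a vertex of $C_{i+1}$. Because the alternating deletion needs two grid rows per wall row, one obtains a subdivision of an elementary cylindrical wall of order roughly half that of the grid. Cycle subpaths between kept branch vertices serve as subdivided edges. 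A subdivision is in turn a butterfly minor, since a subdivision vertex has in- and out-degree one, so its edges are butterfly contractible.

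\textbf{Main obstacle.} The hard part is the precise index bookkeeping. The elementary wall puts two consecutive vertices $v^i_{4j-1},v^i_{4j}$ of each cycle on every radial path, and it requires the in-paths and out-paths to have the prescribed directions. We must therefore check that the kept grid segments have the correct orientation, inward for $\inPath_j$ and outward for $\outPath_j$, and attach at consecutive cycle positions, with the cycle edge between them acting as the short horizontal step. I would first try to realise each wall radial path $\inPath_j$ or $\outPath_j$ as a zig-zag using one grid radial path of the same direction, together with the short cycle arc between two adjacent grid columns. If the orders do not fit that way, I would fall back on taking every other cycle, which loses the stated factor two. That factor is absorbed in $\DirectedWallThmFunction{}$, since the theorem only asserts the existence of some function, and the paper's remark that from a cylindrical grid one obtains a wall by deleting subpaths alternately, losing a factor two, fixes the constant.
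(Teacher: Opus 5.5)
Your overall route is the paper's: apply \cref{thm:directed_grid_theorem}, then turn the grid into a wall with a factor-two loss. The paper asserts that conversion in one sentence, so everything rests on your Step~2. The construction you give there does not work, because of orientations.

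\textbf{Why Step~2 fails.} Write $x^c_p$ for the grid vertex on $C_c$ in column $p$. Outward radial paths sit in odd columns, inward ones in even columns, and all cycles are oriented the same way. You keep every cycle and stagger the deleted radial segments between neighbouring radial paths. Since neighbouring radial paths point in opposite directions, all kept segments at a given cycle then point the same way relative to it. For example, if the outward paths keep $C_1C_2, C_3C_4,\dots$ and the inward paths keep $C_2C_3, C_4C_5,\dots$, then $C_2$ only receives edges and $C_3$ only emits them. The result is not even strongly connected.

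Your fallbacks do not repair this. A wall out-path cannot step along the cycle to the adjacent column, because that column carries an inward path. More generally, suppose every wall cycle is a single grid cycle, whether you use all of them or every other one. Then planarity forces the shared edge $v^i_{4j-3}v^i_{4j-2}$ to be a forward arc of that cycle between two distinct outward columns. No grid edge moves a path backwards. So each out-path advances at least two columns per wall cycle going outwards, and each in-path advances at least two columns per wall cycle going inwards. An out-path and the next in-path are then disjoint only if they start roughly $4m$ columns apart. That gives a quadratic loss, not a factor two, and you do not carry out even that version.

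\textbf{The missing idea.} Keep the radial paths straight and let each wall cycle zig-zag between two consecutive grid cycles. For the $i$-th wall cycle take $C_{2i-1}\cup C_{2i}$. Delete the edges of $C_{2i-1}$ leaving odd columns and the edges of $C_{2i}$ leaving even columns, and keep all radial edges. Set $v^i_{4j-3}=x^{2i-1}_{2j-1}$, $v^i_{4j-2}=x^{2i}_{2j-1}$, $v^i_{4j-1}=x^{2i}_{2j}$ and $v^i_{4j}=x^{2i-1}_{2j}$. The radial edges between $C_{2i-1}$ and $C_{2i}$ become the cycle edges shared with $\outPath_j$ and $\inPath_j$. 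Those between $C_{2i}$ and $C_{2i+1}$ become $v^i_{4j-2}v^{i+1}_{4j-3}$ and $v^{i+1}_{4j}v^i_{4j-1}$. What remains is exactly an elementary cylindrical wall. After discarding surplus rows, $\CylGrid{n}$ contains a subdivision of the elementary wall of order $\lfloor n/2\rfloor$, which realises the claimed factor-two loss.

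\textbf{The function.} This gives $\DirectedWallThmFunction{k}=\lfloor \DirectedGridThmFunction{k}/2\rfloor$. With the statement phrased as ``$\dtw{D}\le k$ or a grid of order $\DirectedGridThmFunction{k}$'', you cannot arrange the bookkeeping to match $2\DirectedGridThmFunction{k}$. Your own ``roughly half'' already contradicts it. This is harmless only because the statement is purely existential.
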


The flat wall theorem for directed graphs by Giannopoulou et al.~\cite{giannopoulou2020directed} ensures the existence of a wall with a bounded number of cross-rows when excluding a large clique as a minor.
We prove a statement for digraphs excluding a cross-row grid instead, which allows us to get rid of all forward crosses in the wall by deleting a small apex set.
We use the following definitions that were introduced by Giannopoulou et al.

\newcommand{\Triadic}{\mathcal{W}}
\begin{definition}[Triadic Partitions]
	\label{def:triadic_partition}
	Let $k \in \N$ be a positive integer and $W$ be a cylindrical wall of order $3k.$
	The \define{triadic partition} of $W$ is the tuple
	\begin{equation*}
		\Triadic = \Brace{W,k,W_1,W_2,W_3,W^1,W^2,W^3}
	\end{equation*}
	such that for each $i \in [3],$ $W_i$ denotes the \hyperref[def:slice]{slice} of $W$ between $Q_{k\Brace{i-1}+1},$ see~\cref{fig:slices} and $Q_{ik},$ and $W^i$ denotes the strip of $W$ between the rows $k\Brace{i-1}+1$ and $ik,$ see~\cref{fig:strips}.
\end{definition}

We often consider paths starting and ending in a brick.
In order to be able to construct paths to and from their start- and end-vertices, we need parts of the wall around the brick, thus there is the concept of tiles centred at a brick.

\newcommand{\centre}[1]{\variablestyle{C}_{#1}}

\begin{restatable}[Tiles]{definition}{TileDefinition}
	\label{def:tile}
	For $i,j \in [k]$ and $d \geq 1,$ the \define{tile} $\tile_{i,j,d}$ of $W$ is defined as the subgraph of $W$ obtained by the union
	\begin{align*}
		\bigcup_{i \leq \ell \leq i+2d+1} \Subpath{Q_{\ell}}{\row_j}{\row_{j+2d+1}} \cup \bigcup_{j \leq \ell \leq j+2d+1} \Subpath{\row_{\ell}}{Q_i}{Q_{i+2d+1}}.
	\end{align*}
	We say $i$ is the \defineSubindex{column index}{tile} of $\tile_{i,j,d},$ and $j$ is the \defineSubindex{row index}{tile} of $\tile_{i,j,d}.$
	Also, $d$ is the \defineSubindex{width}{tile} of $\tile_{i,j,d}.$
	See \cref{fig:tile} for an illustration.
	
	The \defineSubindex{perimeter}{tile} of $\tile_{i,j,d}$ is given by 
	\begin{align*}
		\tile_{i,j,d} \cap \Brace{Q_i \cup Q_{i+2d+1} \cup \outPath_j \cup \inPath_{j+2d+1}}.
	\end{align*}
	We call $Q_i$ the \defineSubindex{left path}{tile} of the perimeter, $Q_{i+2d+1}$ its \defineSubindex{right path}{tile}, $\outPath_j$ the \defineSubindex{upper path}{tile} of the perimeter, and finally $\inPath_{j+2d+1}$ its \defineSubindex{lower path}{tile}.
	
	The \defineSubindex{corners}{tile} of a tile are the vertices $a,b,c,d \in \V{\tile_{i,j,d}}$ where
	\begin{itemize}
		\item $a,$ the \emph{upper left corner}, is the common starting point of $\tile_{i,j,d} \cap Q_i$ and $\tile_{i,j,d} \cap \outPath_j,$
		\item $b,$ the \emph{upper right corner}, is the end of $\tile_{i,j,d} \cap \outPath_j$ and the starting point of $\tile_{i,j,d} \cap Q_{i+2d+1},$
		\item $c,$ the \emph{lower left corner}, is the common end of $\tile_{i,j,d} \cap Q_i$ and $\tile_{i,j,d} \cap \inPath_{j+2d+1},$ and
		\item $d,$ the \emph{lower right corner}, is the end of $\tile_{i,j,d} \cap Q_{i+2d+1}$ and the starting point of $\tile_{i,j,d} \cap \inPath_{j+2d+1}.$
	\end{itemize} 
	
	The \defineSubindex{centre}{tile} of $\tile_{i,j,d}$ is the boundary of the unique brick $\centre{\tile_{i,j,d}}$ of $W$ whose boundary consists of vertices from $Q_{i+d+1},$ $Q_{i+d+2},$ $\inPath_{j+d+1},$ and $\outPath_{j+d+2}.$
	All vertices of $\tile_{i,j,d}$ which are not in the centre and not on the perimeter of $\tile_{i,j,d}$ are called \emph{internal}.
\end{restatable}

\begin{figure}[thb]
	\centering
	\resizebox{\textwidth}{!}{%
    \includegraphics{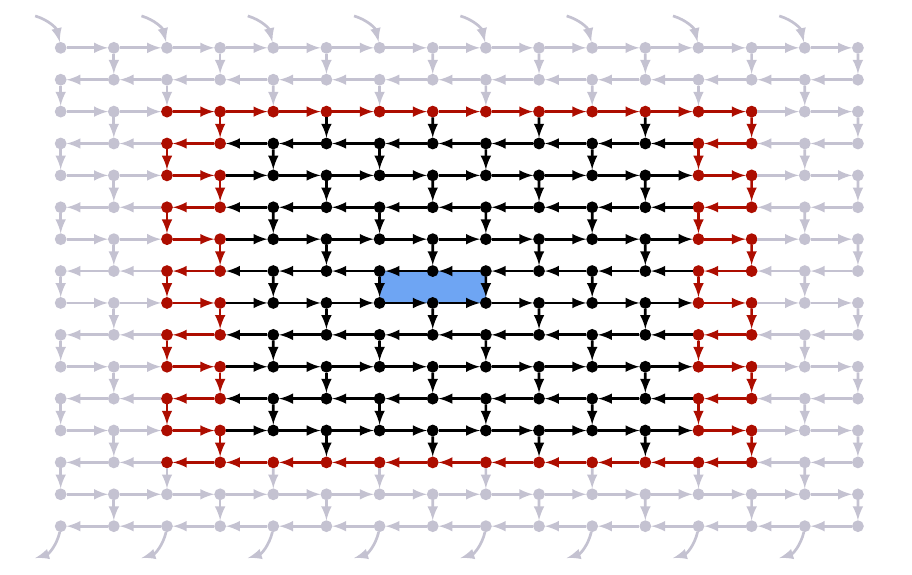}
    }
	\caption{A tile of of width two in a wall of order eight.
		The centre brick is filled in \textcolor{myLightblue!80!blue}{blue} and the perimeter is drawn in \textcolor{myRed}{red}.}\label{fig:tile}
\end{figure}

Please note that by this definition, only bricks lying between $\inPath_i$ and $\outPath_{i+1}$ for some $i\in [k]$ can be the centre of a tile.
However, if we take the mirror image of the unique embedding of the wall along $Q_1,$ we obtain a new embedding, switching in- and out-paths.
We call these two possible embeddings the two \define{parametrisations} of a wall, we write \MarginOnly{$\Fkt{\pi}{W}$}$\Fkt{\pi}{W} = \variablestyle{o},$ if the cycles encounter the out-path of a row before the in-path and $\Fkt{\pi}{W} = \variablestyle{i}$ otherwise.
This means that we can define for every brick $F$ of $W$ a tile $\tile_F$ such that $F$ is the centre of $\tile_F.$

Giannopoulou et al.~use the following concept of flatness.

\begin{definition}[weakly flat]
	\label{def:weakly_flat}
	Let $D$ be a digraph, $W$ a wall in $D,$ $a : \N\to\N$ a function and $t \in \N.$
	The wall $W$ is \defineSorted{$\Fkt{a}{t}$-weakly flat}{weakly flat} with directed \treewidth bounded by $d$ if there is a subset $A \subseteq \V{D}$ with $\Abs{A} \leq \Fkt{a}{t}$ such that $W$ is a wall in $D-A$ and the following~hold:
	\renewcommand{\labelenumi}{\textbf{\theenumi}}
	\renewcommand{\theenumi}{(\roman{enumi})}
	\begin{enumerate}[labelindent=0pt,labelwidth=\widthof{\ref{last-item-weakly-flat}},itemindent=1em]
		\item \label{weakly-flat:separation} There is an undirected separation $\Separation{X}{Y}{}{}$ in $D$ such that $A \cup \per{W} = X \cap Y$ and $\interior{W}$ is contained in $X\setminus Y,$ as well as every vertex in $X \setminus Y$ reaches a vertex in $\interior{W}$ or is reachable from it.
		\item \label{weakly-flat:jumps}\label{weakly-flat:extensions} For every path $Q$ in $D-A$ that has both endpoints in $\interior{W}$ but is internally disjoint from $\interior{W}$ there is a brick $B$ in $W$ such that the boundary of $B$ contains both endpoints of $Q.$
		Moreover, for every brick $B$ of $W$ let $V_B$ be the set of vertices of $D-A$ that appear as internal vertices of a path $Q$ with both endpoints on $B$ but internally disjoint from $B.$
		The strongly connected components of $\InducedSubgraph{D}{C},$ which are called the \define{extensions} of $B,$ have directed \treewidth at most $d.$
		\item \label{weakly-flat:crosses} If $\tile$ is a tile of width five of $W$ and $c_1$ is its upper left corner, $c_2$ its upper right corner, $d_1$ its lower left corner and $d_2$ its lower right corner, then there are no two paths $Q_1$ and $Q_2$ in $\Brace{D-A}-\Brace{W-\tile}$ such that $Q_1$ is a $c_1$-$d_2$-path and $Q_2$ is a $c_2$-$d_1$-path. \label{last-item-weakly-flat}
	\end{enumerate}
	For a function $G: \N \to \N,$ the wall $W$ is \defineSorted{$\Fkt{g}{t}$-nearly-$\Fkt{a}{t}$-weakly flat}{nearly weakly flat} with directed \treewidth bounded by $d$ if it satisfies \cref{weakly-flat:separation,weakly-flat:jumps} and there are at most $\Fkt{g}{t}$ rows whose tiles do not satisfy \cref{weakly-flat:crosses}, we also refer to such a row as \emph{cross-row}.
\end{definition}

Giannopoulou et al.~obtain two different flat wall theorems for directed graphs.
The~first excludes a transitive tournament in order to get rid of cross-rows entirely.
The second only excludes the clique, thus cross-rows may still exist.

\begin{theorem}[Directed weakly flat wall theorem~\cite{giannopoulou2020directed}]
	\label{thm:directed_weakly_flat_wall}
	There exist functions $d \colon \N \times \N \to \N$ and $a \colon \N \to \N$ such that for every directed graph $D$ and all $k,t \in \N$ one of the following is true:
	\renewcommand{\labelenumi}{\textbf{\theenumi}}
	\renewcommand{\theenumi}{(\roman{enumi})}
	\begin{enumerate}[labelindent=0pt,labelwidth=\widthof{\ref{last-item-weakly-flat-wall}},itemindent=1em]
		\item $\dtw{D} < \Fkt{d}{k,t},$
		\item $D$ contains a tournament of order $t$ as a butterfly minor
		\item there is a directed cylindrical wall $W$ of order $k$ in $D$ that is $\Fkt{a}{t}$-weakly flat with directed \treewidth bounded by $\Fkt{d}{k,t}.$ \label{last-item-weakly-flat-wall}
	\end{enumerate}
\end{theorem}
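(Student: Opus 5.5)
The plan follows the standard flat-wall strategy of Robertson and Seymour, adapted to butterfly minors as in Giannopoulou et al. The outline is: start from a huge wall, find a highly linked apex set, make the wall locally flat, and argue that the existing crosses yield a transitive tournament.

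\textbf{Step 1: a huge wall.} Assume $\dtw{D} \geq \Fkt{d}{k,t}$, where $d$ is chosen so large that \cref{thm:directed_wall_theorem} yields a cylindrical wall $W_0$ of order $N \gg k,t$ as a butterfly minor. Since $W_0$ has maximum in- and out-degree small, this minor can be realised as a subdivision, that is, a wall in $D$.

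\textbf{Step 2: apices or a tournament.} Consider vertices $v \notin \V{W_0}$ that have many (say $\geq p(t)$) pairwise far-apart, internally disjoint paths to and from $\interior{W_0}$.
\begin{itemize}
\item If there are at least $t$ such vertices, route their paths through disjoint subwalls. The wall's high connectivity lets us link every ordered pair in a consistent direction, which produces a butterfly model of a tournament of order $t$ grasped by the wall.
\item Otherwise, let $A$ be the set of these fewer than $t$ vertices, together with small separators found via a Menger-type argument, so that $\Abs{A} \leq \Fkt{a}{t}$.
\end{itemize}

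\textbf{Step 3: separation and jumps.} In $D-A$, pick a subwall $W_1$ of large order lying in the interior of $W_0$. Take $X$ to be everything reachable to or from $\interior{W_1}$ without passing through $\per{W_1} \cup A$. This gives \cref{weakly-flat:separation}.
\begin{itemize}
\item For \cref{weakly-flat:jumps}, consider a jump between far-apart bricks. Many such jumps can either be routed into a tournament model, by combining them with the wall's rows and columns, or they are hit by few vertices; those vertices are added to $A$, or we pass to a subwall avoiding the jumps.
\item To bound the directed \treewidth of the extensions, argue as follows. An extension of large directed \treewidth would contain its own large wall by \cref{thm:directed_wall_theorem}, attached to a single brick. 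Choosing $W_1$ minimally, for instance by minimising the size of $X$, rules this out, because we could replace the wall by one inside the extension. Iterating this replacement gives the bound $\Fkt{d}{k,t}$.
\end{itemize}

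\textbf{Step 4: no crosses (main obstacle).} This is the hard part, \cref{weakly-flat:crosses}. Split the wall into triadic partitions and many disjoint strips.
\begin{itemize}
\item If many rows contain a cross in some width-five tile, choose crosses in pairwise distant tiles across different rows. A single cross combined with the cylindrical structure lets a row "jump" over its neighbour.
\item With enough such crosses, arranged in $t$ slices and $t$ strips, reroute the in-/out-paths so that the branch sets $\mu(v_i)$ are formed from rows. Each ordered pair $i<j$ gets an edge from $\mu(v_i)$ to $\mu(v_j)$ through a cross. The cylindrical direction supplies only one orientation per pair, which is exactly why we obtain a \emph{transitive} tournament rather than a clique.
\item The difficulty is keeping the crosses' paths disjoint from each other and from the rerouting paths. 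For this I would use that the paths lie in $(D-A)-(W-\tile)$, so paths in distinct tiles interact only through the rest of $D$. Where they do intersect, I would use the half-integral linkage trick: pass to a half-integral model and then clean it up using the wall's slack.
\end{itemize}

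\textbf{Conclusion.} If the crosses cannot be routed in this way, at most $g(t)$ rows are cross-rows. Deleting these rows, by taking a subwall on the remaining consecutive rows of order $k$, gives a weakly flat wall, which is outcome \cref{last-item-weakly-flat-wall}.
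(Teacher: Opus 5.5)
\textbf{The main gap is your final step.} In this paper's convention a row is a pair of radial paths $P^1_j,P^2_j$, and every cycle $C_1,\dots,C_k$ runs through every row. A cross in a width-five tile joins corners lying on cycles $Q_i$ and $Q_{i+11}$, so it spans a range of consecutive cycles.

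Dropping the cross-rows therefore does not give a cylindrical wall without crosses. The cycles must be kept in full, so the discarded rows end up inside long bricks of the new wall. The two cross paths, extended along surviving cycle and row segments, again join the corners of a width-five tile of the new wall in the forbidden way. This already happens in $\CrossRowGrid{k}$: deleting its special row leaves pairs of crossing chords inside the long bricks between consecutive cycles, and these still yield crosses. Crosses can only be avoided by passing to a \emph{slice}, that is, a set of consecutive cycles whose column range misses them.

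So bounding the number of cross-rows by $g(t)$ is the wrong goal; that is exactly the conclusion of the clique version. The case that distinguishes the tournament version is a bounded number of rows, possibly a single one, carrying crosses in many pairwise disjoint slices. Your Step 4 never handles this case, since it only uses crosses spread over many distinct rows.

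What is needed is the dichotomy this paper uses for its own variant (Phase II in the proof of \cref{thm:directed_flat_wall}; the statement you were given is only cited here):
\begin{itemize}
\item either some slice is cross-free,
\item or there are $t$ disjoint cross slices. \Cref{lem:many_cross_slices_yield_minor} contracts these into a single row to give $\CrossRowGrid{t}$, and the switch construction of \cref{lem:tournament_using_switches} then yields the transitive tournament.
\end{itemize}
In that construction the crosses are not edges between branch sets. They are switches that make every pair of lanes adjacent once. The tournament edges are short pieces of another in-path between adjacent lanes, and transitivity comes from scheduling all in-edges of each lane before its out-edges, not from the cylindrical direction.

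\textbf{Other steps are only asserted.} In Step 3, long jumps connect arbitrary pairs of far-apart tiles, so there is no single Menger instance whose separator you can add to $A$. The argument of \cite{giannopoulou2020directed}, as adapted here, needs tilings with colour classes, the auxiliary digraphs, \cref{thm:X_paths,thm:half_integral}, and \cref{lem:directed_long_jumps}. The outcomes (ii) and (iii) of that lemma are again many crosses in a single strip, which is the case above.

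Moreover, passing to a subwall that avoids the jumps creates new jumps through the discarded part of the wall. This is why the paper iterates the marking over many rounds and shows that any surviving long jump would meet tiles marked in every round.

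Finally, your minimality argument for the directed tree-width of the extensions does not fix a measure that provably decreases. The wall found inside an extension comes with a new apex set and a new separation, and its inside part need not be contained in the old one.
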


\begin{theorem}[Directed nearly-weakly flat wall theorem~\cite{giannopoulou2020directed}]
	\label{thm:directed_nearly_weakly_flat_wall}
	There exist functions $d \colon \N \times \N \to \N,$ $a \colon \N \to \N$ and $g \colon \N \to \N$ such that for every directed graph $D$ and all $k,t \in \N$ one of the following is true:
	\renewcommand{\labelenumi}{\textbf{\theenumi}}
	\renewcommand{\theenumi}{(\roman{enumi})}
	\begin{enumerate}[labelindent=0pt,labelwidth=\widthof{\ref{last-item-nearly-weakly-flat-wall}},itemindent=1em]
		\item $\dtw{D} < \Fkt{d}{k,t},$
		\item $D$ contains $\K{t}$ as a butterfly minor
		\item there is a directed cylindrical wall $W$ of order $k$ in $D$ that is $\Fkt{g}{t}$-nearly-$\Fkt{a}{t}$-weakly flat with directed \treewidth bounded by $\Fkt{d}{k,t}.$ \label{last-item-nearly-weakly-flat-wall}
	\end{enumerate}
\end{theorem}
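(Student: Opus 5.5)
The plan is to run the standard flat wall argument in the directed setting, with the directed wall theorem (\cref{thm:directed_wall_theorem}) as the engine. We pick $\Fkt{d}{k,t}$ large enough that, whenever $\dtw{D} \geq \Fkt{d}{k,t}$, \cref{thm:directed_wall_theorem} gives a cylindrical wall $W_0$ of order $N \gg k$ as a butterfly minor. The goal is then to find, inside $W_0$, a subwall $W$ of order $k$ together with an apex set $A$, $\Abs{A} \leq \Fkt{a}{t}$, satisfying \cref{weakly-flat:separation,weakly-flat:jumps}, and such that at most $\Fkt{g}{t}$ rows violate \cref{weakly-flat:crosses}. Whenever some step fails, we aim to construct $\K{t}$ as a butterfly minor instead.

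\textbf{Step 1: jumps and the apex set.} Consider $W_0$-paths in $D$ whose endpoints are far apart in $W$-distance, that is, not on a common brick. Using a directed Menger argument, or its half-integral Erdős–Pósa variant, relative to the wall, there are two cases.
\begin{itemize}
\item If there is a large family of pairwise disjoint such jumps, each attached to many distinct slices or strips, we route them through the wall via the triadic partition (\cref{def:triadic_partition}) to obtain a model of $\K{t}$ that is grasped by the wall.
\item Otherwise, a set $A$ of size bounded in $t$ meets all such jumps for some large subwall.
\end{itemize}
After deleting $A$ and passing to a subwall, every path leaving and re-entering $\interior{W}$ lands on a single brick. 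We then define $X$ as $A \cup \per{W}$ together with all vertices reachable from, or reaching, $\interior{W}$ in $D-A-\per{W}$; this gives \cref{weakly-flat:separation}.

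\textbf{Step 2: bounded-width extensions.} For a brick $B$, suppose some extension has directed treewidth at least $\Fkt{d}{k,t}$. Then \cref{thm:directed_wall_theorem} yields a large wall $W_B$ hanging off $B$, and there are two sub-cases.
\begin{itemize}
\item If $W_B$ is well linked to $W$ through many disjoint paths, these paths would be jumps that Step~1 excluded, or they produce $\K{t}$.
\item Otherwise, $W_B$ is separated from $W$ by few vertices, and we restart the whole argument inside $W_B$.
\end{itemize}
To keep this recursion finite, we use a potential argument: each restart either strictly shrinks the relevant part of $D$ or adds to the apex set, and the depth is bounded in $t$. This is what forces $\Fkt{d}{k,t}$ to be defined recursively.

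\textbf{Step 3: cross-rows.} This is the step I expect to be the main obstacle. Suppose more than $\Fkt{g}{t}$ rows each contain a tile of width five with a cross, meaning disjoint paths $c_1$-$d_2$ and $c_2$-$d_1$ avoiding $W-\tile$. The plan is as follows.
\begin{enumerate}
\item Choose many such rows that are pairwise far apart, so that their tiles are disjoint and the remaining wall between them stays untouched.
\item Observe that a single cross lets two horizontal ``lanes'' swap positions.
\item Use $\binom{t}{2}$ crosses, in distinct rows and spread over disjoint slices, to realise a sorting network on $t$ lanes. Each of the $t$ lanes carries the out-branching of one clique vertex along a cycle.
\item Realise the edges in both directions of each pair by meeting the lanes at a cross, since in-paths and out-paths give the two directions.
\end{enumerate}
The delicate points are:
\begin{itemize}
\item ensuring the branch sets are genuinely in-out-branchings, so that contraction is butterfly-legal;
\item ensuring the crossing paths do not interfere with each other, which is where the restriction to $D-A$ minus $W-\tile$ and the bound on jumps from Step~1 are used.
\end{itemize}
Taking $\Fkt{g}{t}$ of order $t^2$ times a constant should suffice. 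With $a$, $g$ and $d$ fixed this way, the three outcomes of the theorem follow.
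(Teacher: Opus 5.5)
\textbf{Note on the comparison.} The paper does not prove this theorem; it is quoted from \cite{giannopoulou2020directed}. The nearest argument in the paper is the proof of \cref{thm:directed_flat_wall}. Measured against that argument, your sketch has a genuine gap, and it sits mainly in Step~3.

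\textbf{Step 3: the clique construction cannot work as described.} A tile cross consists of a $c_1$-$d_2$-path and a $c_2$-$d_1$-path, so it runs from the upper to the lower corners. It therefore swaps two lanes running \emph{along the cycles}. Regard each cross path as a single edge from an upper to a lower corner. Then every edge of $W$ and of the crosses is weakly forward in the direction of the cycles, so every directed cycle built from the wall and the crosses must wind around the cylinder.

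Now take a pair $u,v$ whose lanes $M_u,M_v$ meet at a cross, where you realise both $u\to v$ and $v\to u$ by an out-path edge and an in-path edge. Both lanes traverse the meeting region forward, so the two attachment points occur in the same order on $M_u$ and on $M_v$. In a butterfly model, however, every in-attachment of a branch set must precede every out-attachment along its lane. This forces opposite orders on $M_u$ and $M_v$. So the two directions of a pair can never be realised at one meeting.

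More generally, the directed cycle formed by $\mu(u)$, $\mu(v)$, $\mu(uv)$ and $\mu(vu)$ must wind around the cylinder. Hence the branch sets have to wrap around, the roots must be staggered suitably, and each pair must be joined at two different places, consistently for all pairs. Your design contains none of this. Indeed, nothing in it uses that the crosses lie in \emph{distinct} rows, which is a sign that the essential ingredient is missing. A sorting network of parallel lanes with local connections only yields the acyclic pattern of \cref{lem:tournament_using_switches}, i.e.\ $\Kdir{t}$. This is exactly how the paper obtains a transitive tournament, not $\K{t}$, from the cross-row grid.

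\textbf{Steps 1 and 2 do not go through as stated.}
\begin{itemize}
\item \emph{Passing to a subwall.} A bounded set hitting the long jumps over $W_0$ says nothing about long jumps over a subwall $W\subseteq W_0$, because paths through $W_0-W$ become new jumps over $W$. This is why the proof of \cref{thm:directed_flat_wall} marks tiles in many rounds with nested families $\mathcal{F}_{\rom{1},i}$. In \cref{claim:nolongjumps} it then cuts any surviving long jump into many disjoint subpaths between tiles marked in different rounds.
\item \emph{Getting usable jumps.} When no small hitting set exists, the resulting linkage must be turned into jumps of the form required by \cref{lem:directed_long_jumps}: pairwise disjoint, starting in centres of well-separated tiles, and internally disjoint from a still large wall. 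This needs the auxiliary digraphs of type~\rom{1} and~\rom{2}, half-integral rerouting and tier~\rom{2} tilings. A plain Menger argument on $W_0$-paths does not give it.
\item \emph{Short jumps.} Jumps between distinct nearby bricks are not addressed at all. They are not long, and no bounded set meets them. They have to be handled by localising to one of many disjoint candidate subwalls, where every failing candidate contributes a local cross that must then be converted into the excluded minor.
\item \emph{Step 2.} The paths linking a large wall inside an extension of a brick $B$ to $W$ may all end on the boundary of $B$, which can be long since $W$ is a subdivision. So they are not jumps excluded by Step~1, and they do not obviously produce $\K{t}$. Nothing in your potential argument bounds the recursion depth in terms of $t$.
\end{itemize}
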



Both of these theorems have their drawbacks.
\Cref{thm:directed_nearly_weakly_flat_wall} only finds a wall that still contains cross-rows.
So, it still contains highly non-planar behaviour.
\Cref{thm:directed_weakly_flat_wall} ensures a wall that does not contain any cross-rows, but the excluded structure of a transitive tournament is in itself not strongly connected.
So, the transitive tournament is of directed \treewidth one.
This means that any structure theorem using the transitive tournament as the excluded butterfly minor does not provide sufficiency, that is, every graph having the described structure also excludes the transitive tournament as a butterfly~minor.

\section{The cross-row grid}
\label{sec:cross-row_grid}

Here, we consider a structure lying in between the transitive tournament and the clique: the \emph{cross-row grid}.
It lies in between the two other structures in the sense that every transitive tournament is a butterfly minor of a cross-row grid and every cross-row grid is a butterfly minor of a clique.

Let $D$ be cylindrical grid of order $k$ with cycles $C_1,\dots,C_k$ and in-paths $\inPath_1,\dots,\inPath_k$ and out-paths $\outPath_1, \dots, \outPath_k.$
Let $a_1,\dots,a_k$ be the vertices of $\outPath_1$ and $b_1,\dots,b_k$ be the vertices of $\inPath_1$ in order of occurrence along the path.
A \define{cross-row grid} of order $k,$ written $\CrossRowGrid{k},$ is obtained from $D$ by adding the edges $\Set{\Brace{a_j,b_{k-j}} \mid 1 \leq j \leq k-1} \cup \Set{\Brace{a_j,b_{k-\Brace{j-2}}} \mid 2 \leq j \leq k}.$
See \cref{fig:cross-row_grid} for an example.

\begin{figure}[!ht]
	\centering
	\begin{tikzpicture}[decoration={
			markings,
			mark=at position 0.8 with {\arrow{latex}}},scale=0.8]
		\CylindricalGrid{4}{1}{}{}
		\begin{pgfonlayer}{background}
		\foreach[evaluate={\j=int(\i+1);}] \i in {1,2,3}
		{
			\draw[edge,myRed,bend right,postaction={decorate}] ({2*360/8}:\j) to ({3*360/8}:\i);
			\draw[edge,myRed,bend right,postaction={decorate}] ({2*360/8}:\i) to ({3*360/8}:\j);
		}
		\foreach[evaluate={\j=int(5-\i);}] \i in {1,2,3,4}
		{
			\node at ($({2*360/8}:\i)+(315:0.5)$) {$a_{\i}$};
			\node at ($({3*360/8}:\i)+(280:0.4)$) {$b_{\j}$};
		}
		\end{pgfonlayer}
	\end{tikzpicture}
	\caption{A cross-row grid of order four.}
	\label{fig:cross-row_grid}
\end{figure}

We start by proving that every transitive tournament is contained in a large enough cross-row grid as a butterfly minor.
In order to construct such a transitive tournament minor we introduce the following tool.
Let $D$ be a digraph with $2t$ designated vertices $\Brace{i_1,\dots,i_t, o_1,\dots,o_t}.$
If for every $s \in [t-1]$ there exist $t$ disjoint paths $P_1,\dots,P_t$ in $D$ such that $P_s$ is an $i_s$-$o_{s+1}$-path, $P_{s+1}$ is an $i_{s+1}$-$o_s$-path and for all $s' \notin\Set{s,s+1}$ the path $P_{s'}$ is an $i_{s'}$-$o_{s'}$-path, then $D$ is called a \defineSorted{$\Brace{i_1,\dots,i_t}$-$\Brace{o_1,\dots,o_t}$-switch}{switch}.

We define the function
\begin{equation*}
	\switchesfortournamentfunction{t} \coloneqq \frac{1}{2}\Brace{t^2-t}-2,
\end{equation*}
which gives for every number $t$ the number of switches necessary to a build transitive tournament minor on $t$ vertices.

\begin{lemma}
	\label{lem:tournament_using_switches}
	Let $D$ be a digraph that contains $\switchesfortournamentfunction{t}$ disjoint subgraphs $D_1, \dots,$ $D_{\Fkt{f}{t}}$ such that $D_j$ is an $(i^j_1,\dots,i^j_t)$-$(o^j_1,\dots,o^j_t)$-switch.
	For every $1 \leq j < \switchesfortournamentfunction{t}$ let $\mathcal{L}_j = (P^j_1,\dots,P^j_t)$ be an $(o^j_1,\dots,o^j_t)$-$(i^{j+1}_1,\dots,i^{j+1}_t)$-linkage with $P^j_s$ is an $o^j_s$-$i^{j+1}_s$-path and let $\mathcal{L}_0= \Brace{P^0_1,\dots,P^0_t}$ be a linkage ending in the vertices $\Brace{i^{1}_1,\dots,i^{1}_t}$ and $\mathcal{L}_{\switchesfortournamentfunction{t}+1} = (P^{\switchesfortournamentfunction{t}+1}_1, \dots,$ $P^{\switchesfortournamentfunction{t}+1}_t)$ be a linkage starting in the vertices $(i^{\switchesfortournamentfunction{t}}_1,\dots,$ $i^{\switchesfortournamentfunction{t}}_t)$ such that
	$\mathcal{L}_0,\dots,$ $\mathcal{L}_{\switchesfortournamentfunction{t}}$ are internally disjoint from the switches.
	Also, let $\mathcal{R}_i \coloneqq \Set{R^{a,a+1}_i \mid 1 \leq a < t}$ for all $1 \leq i \leq \switchesfortournamentfunction{t}$ such that $R^{a,a+1}_i$ starts on the $a$-th path and ends on the $a+1$-th path of $\mathcal{L}_i$ and is disjoint to the switches and internally disjoint to the linkages $\mathcal{L}_0,\dots,\mathcal{L}_{\switchesfortournamentfunction{t}}.$
	Additionally, we require that the linkages $\mathcal{L}_1, \dots, \mathcal{L}_{\switchesfortournamentfunction{t}}$ are pairwise disjoint.
	Then, $D$ contains a $\Kdir{t}$ as a butterfly minor.
\end{lemma}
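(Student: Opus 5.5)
We plan to realise the transitive tournament $\Kdir{t}$ on vertex set $\Set{x_1,\dots,x_t}$ through the adjacent-transposition (bubble sort) permutation picture. The $t$ ``strands'' start at $\mathcal{L}_0$ and end at $\mathcal{L}_{\switchesfortournamentfunction{t}+1}$. In each switch $D_j$ we either route straight through, with $i^j_s \to o^j_s$ for all $s$, or perform the transposition at some position $s$, which the switch property allows via disjoint paths. Composing these choices along $\mathcal{L}_1,\dots,\mathcal{L}_{\switchesfortournamentfunction{t}}$ produces $t$ pairwise disjoint long paths $S_1,\dots,S_t$, one per strand. The strand starting at position $a$ becomes the in-out-branching, in fact a path, modelling $x_a$.

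\textbf{Step 1: the order of transpositions.} Choose a sequence of adjacent transpositions in which every pair of strands becomes adjacent at some linkage $\mathcal{L}_i$ with the lower-indexed strand at the lower position. Concretely, run bubble sort that reverses the order of the strands. This uses $\binom{t}{2}$ swaps, and just before each swap the two strands being swapped are neighbours. A cleaner way to see it is that after the $m$-th swap, the linkage $\mathcal{L}_m$ has some consecutive pair $(a,a+1)$ of strands. We must check that every unordered pair of strands appears as consecutive positions in some $\mathcal{L}_i$ with $1\le i\le\switchesfortournamentfunction{t}$. The count $\frac12(t^2-t)-2$ is two less than $\binom t2$, which suggests the first and last swap are handled by $\mathcal{L}_0$ and $\mathcal{L}_{\switchesfortournamentfunction{t}+1}$, or that pairs adjacent initially and finally need no dedicated swap. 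We will fix the indexing so that each pair is adjacent in at least one $\mathcal{L}_i$; this bookkeeping is the main obstacle.

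\textbf{Step 2: the edges.} For each pair $\Set{x_a,x_b}$ with $a<b$, pick an index $i$ at which the strands of $a$ and $b$ occupy consecutive positions $p,p+1$ of $\mathcal{L}_i$. Take the connector $R^{p,p+1}_i$, which goes from the $p$-th path to the $(p+1)$-th path. The required direction $x_a\to x_b$ needs strand $a$ at position $p$, and this is the invariant guaranteed by doing the swaps in the order that first sorts the pairs appropriately. Before swap $(p,p+1)$ the lower strand sits at position $p$. Since bubble sort towards the reversed order swaps each pair exactly once, the pair is adjacent with the lower index on top right before its swap, so the connector orientation always matches $a\to b$ where $a$ is at position $p$. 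This yields a transitive orientation whose order is given by the initial positions.

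\textbf{Step 3: butterfly minor.} Contract each $R^{p,p+1}_i$ to a single edge. Its internal vertices have in- and out-degree one and are disjoint from everything else, so these contractions are butterfly contractible. Next, split each strand path $S_a$ at the attachment points. The strand $S_a$ is a directed path, and connectors both leave it (out-edges) and enter it (in-edges) at various points. To obtain a valid in-out-branching, we need every incoming attachment to occur before every outgoing one, or else we must use the fact that a path is itself an in-out-branching when rooted anywhere, provided in-edges attach to the prefix and out-edges to the suffix. Here transitivity helps. Strand $a$ receives edges from strands of smaller index and sends edges to strands of larger index. Using bubble sort, where strand $a$ first moves and interacts in a controlled order, we arrange that all interactions with smaller-indexed strands happen before those with larger-indexed ones; alternatively we may choose for each pair a suitable one of its possibly several adjacency moments. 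This ordering, checked alongside Step 1, completes the model and gives $\Kdir{t}$ as a butterfly minor.
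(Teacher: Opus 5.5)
Your architecture matches the paper's: strands are routed through the switches, with one adjacent transposition per switch. Each edge of the tournament is realised by one connector from the upper to the lower of two adjacent strands, and each vertex is modelled by its strand.

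\textbf{The gap is in Step~3.} The real content of the lemma is that along every strand all incoming connectors attach before all outgoing ones. You only assert this can be ``arranged'', and it does not follow from choosing some sequence of adjacent transpositions that reverses the strands. For example, let the passes go upwards: first strand $t$ is moved to position $1$, then strand $t-1$ to position $2$, and so on. A middle strand $k$ then sends to every $l>k$ during the passes of those $l$. All of these passes come before its own pass, where it receives from every $j<k$. Already for $t=3$, strand $2$ emits on the first linkage and receives on the third, so no root on its path yields an in-out-branching containing both attachments. Picking another adjacency moment does not rescue this case either: strands $2$ and $3$ are adjacent with $2$ on top only on the first linkage.

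\textbf{The fix is a specific schedule.} It is the one behind the paper's blocks $\mathcal{B}_1,\dots,\mathcal{B}_{t-1}$ and its figure. At the start of block $k$, strands $k,k+1,\dots,t$ occupy positions $1,\dots,t-k+1$ in this order. On the $j$-th linkage of the block, strand $k$ is at position $j$ and strand $k+j$ at position $j+1$. You use that linkage's connector $R^{j,j+1}$ for the edge from $x_k$ to $x_{k+j}$, and the next switch exchanges positions $j$ and $j+1$. Hence strand $k$ only receives in blocks $1,\dots,k-1$ and only sends in block $k$. Every strand meets the linkages in increasing order, so rooting its path between the last in-attachment and the first out-attachment gives the required in-out-branching.

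\textbf{This also settles your counting worry, which is the easy part.} One connector per linkage, in blocks of sizes $t-1,\dots,1$, gives $\binom{t}{2}$ linkages. The pair $\{t-1,t\}$ is never swapped, so $\binom{t}{2}-1$ transpositions suffice. Linkages do not ``handle'' swaps: $\mathcal{L}_0$ and the last linkage simply carry a connector with no switch on one side.

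\textbf{The paper's own count is loose here.} Its proof splits $f(t)+2=\binom{t}{2}$ linkages into these blocks, which needs one more transposition than the $f(t)$ switches in the statement. Its explicit rule $x\equiv j \pmod{t-1}$ for the switching position agrees with the block schedule only for $t\le 4$. So what you need to establish is the block schedule, not the exact constant.
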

\begin{proof}
	We build butterfly contractible models for every vertex of $\Kdir{t}.$
	Let $v_1,\dots,v_t$ be the vertices of $\Kdir{t}$ such that all edges are of the form $\Brace{v_i,v_j}$ with $i < j.$
	The model of $v_k$ consists of a path $M_k,$ $k-1$ edges $e^{\variablestyle{in}(k)}_1,\dots,e^{\variablestyle{in}(k)}_{k-1}$ and $t-k$ edges $e^{\variablestyle{out}(k)}_{k+1},\dots,e^{\variablestyle{out}(k)}_{t}$ such that the vertices $\Head{e^{\variablestyle{in}(k)}_1},$ $\dots,$ $\Head{e^{\variablestyle{in}(k)}_{k-1}},$ $\Tail{e^{\variablestyle{out}(k)}_{k+1}},$ $\dots,$ $\Tail{e^{\variablestyle{out}(k)}_{t}}$ occur on $M_k$ in that order, see \cref{fig:tournament_minor_vertex_model} for an illustration.
	
	\begin{figure}[!ht]
		\centering
		\begin{tikzpicture}
			\def\hor{1}
			\def\vert{1.1}
			\foreach \i in {1,...,8}
			{
				\node[vertex,myGreen] (v-\i) at ($(\i*\hor,0)$) {};			
			}
			\foreach[evaluate={\j=int(\i+1);}] \i in {1,...,7}
			{
				\draw[directededge,myGreen] (v-\i) to (v-\j);			
			}
			\node (v-M) at ($(v-8)+(0.4,0.2)$) {\textcolor{myGreen}{$M_i$}};
			
			\node (e-1) at ($(1*\hor,\vert)$) {$e^{\variablestyle{in}(i)}_{1}$};
			\node (e-2) at ($(2*\hor,\vert)$) {$e^{\variablestyle{in}(i)}_{2}$};
			\node (e-3) at ($(3*\hor,\vert)$) {\vphantom{$e^{\variablestyle{in}(i)}_{1}$}$\dots$};
			\node (e-4) at ($(4*\hor,\vert)$) {$e^{\variablestyle{in}(i)}_{i-1}$};
			\foreach \i in {1,...,4}
			{
				\draw[directededge] (e-\i) to (v-\i);		
			}
			
			\node (e-5) at ($(5*\hor,-\vert)$) {$e^{\variablestyle{out}(i)}_{i+1}$};
			\node (e-6) at ($(6*\hor,-\vert)$) {$e^{\variablestyle{out}(i)}_{i+2}$};
			\node (e-7) at ($(7*\hor,-\vert)$) {\vphantom{$e^{\variablestyle{in}(i)}_{1}$}$\dots$};
			\node (e-8) at ($(8*\hor,-\vert)$) {$e^{\variablestyle{out}(i)}_{t}$};
			\foreach \i in {5,...,8}
			{
				\draw[directededge] (v-\i) to (e-\i);		
			}
		\end{tikzpicture}
		\caption{The butterfly minor model for a vertex of the transitive tournament.}
		\label{fig:tournament_minor_vertex_model}
	\end{figure}
	
	We construct $M_1,\dots,M_t$ inductively.
	We also define a function $f_k$ helping us to keep track of the path we construct.
	The path $M^1_k$ is defined as $P^0_k,$ which ends in the vertex $i^1_{f_k(1)},$ and we define $f_k(1) \coloneqq k$ for all $1 \leq k \leq t.$
	
	Now assume we have constructed a path $M^j_k$ ending in $i^j_{f_k(j)}$ for all $1 \leq k \leq t.$
	Let
	\begin{equation*}
		x \coloneqq \begin{cases}
			j \bmod \Brace{t-1} &, \text{ if } j \bmod \Brace{t-1}<0\\
			t-1 &, \text{ if } j \bmod \Brace{t-1}=0.
		\end{cases}
	\end{equation*}
	We use $D_j$ to switch the paths arriving at $i^j_x$ and $i^j_{x+1}.$
	So let $\mathcal{Q}^j$ be the $(i^j_1,\dots,i^j_t)$-$(o^j_1,\dots,o^j_t)$-linkage within $D_j$ that contains an $i^j_y$-$o^j_y$-path for every $y \notin \Set{x,x+1}$ and an $i^j_x$-$o^j_{x+1}$-path as well we an $i^j_{x+1}$-$o^j_x$-path.
	Obtain $M^{j+1}_k$ from $M^j_k$ by appending the path $Q$ from $\mathcal{Q}^j$ starting in $\End{M^j_k}$ and then the path $P^{j}_{k'} \in \mathcal{L}_{j+1}$ starting in $\End{Q}.$
	Define $f_k(j+1) \coloneqq k'.$
	
	Finally, we define $M_k \coloneqq M^{\switchesfortournamentfunction{t}}_k.$
	As $M_1,\dots,M_t$ each contain exactly one path from each linkage $\mathcal{L}_1,\dots,\mathcal{L}_{t},\mathcal{Q}_1,\dots,\mathcal{Q}_t$ they yield a $\Brace{i^1_1,\dots,i^1_t}$-$(o^{\switchesfortournamentfunction{t}}_1,\dots,$ $o^{\switchesfortournamentfunction{t}}_t)$-linkage.
	
	\begin{figure}[!ht]
		\centering
		\resizebox{\textwidth}{!}{%
			\begin{tikzpicture}
				\node (constr) at (-4,0) {\begin{tikzpicture}
						\def\hdist{2}
						\def\vdist{0.4}
						\def\width{0.7}
						\node (a-l-1) at (0,0) {};
						\node (b-l-1) at ($(a-l-1)+(\hdist,0)$) {};
						\node (c-l-1) at ($(b-l-1)+(\hdist,0)$) {};
						\node (d-l-1) at ($(c-l-1)+(\hdist,0)$) {};
						\node (e-l-1) at ($(d-l-1)+(\hdist,0)$) {};
						\foreach \x in {a,b,c,d,e}
						{
							\foreach[evaluate={\j=int(\i-1);}] \i in {2,3,4}
							{
								\node (\x-l-\i) at ($(\x-l-\j)+(0,-\vdist)$) {};
							}
							\foreach \i in {1,2,3,4}
							{
								\node (\x-r-\i) at ($(\x-l-\i)+(\width,0)$) {};
							}
						}
						
						\foreach \x in {a,b,c,d,e}
						{
							\draw[myRed,thick,rounded corners=true] ($(\x-l-1)+(-0.1,\vdist)$) rectangle ($(\x-r-4)+(0.1,-\vdist)$);
						}
						\node at ($(b-r-4)!0.5!(c-l-4)-(0,0.8)$) {\textcolor{myRed}{switches}};
						
						\foreach \i in {1,2,3,4}
						{
							\node (s-\i) at ($(a-l-\i)-(\hdist/2,0)$) {};
							\node (t-\i) at ($(e-r-\i)+(\hdist/2,0)$) {};
						}			
						
						\draw[->,-latex] (s-1) -- (a-l-1.center) -- (a-r-2.center) -- (b-l-2.center) -- (b-r-3.center) -- (c-l-3.center) -- (c-r-4.center) -- (t-4);
						\draw[markeddirected={myOrange}{black}] (s-2) -- (a-l-2.center) -- (a-r-1.center) -- (d-l-1.center) -- (d-r-2.center) -- (e-l-2.center) -- (e-r-3.center) -- (t-3);
						\draw[->,-latex] (s-3) -- (b-l-3.center) -- (b-r-2.center) -- (d-l-2.center) -- (d-r-1.center) -- (t-1);
						\draw[->,-latex] (s-4) -- (c-l-4.center) -- (c-r-3.center) -- (e-l-3.center) -- (e-r-2.center) -- (t-2);
						
						\draw[markeddirected={myOrange}{myGreen}] ($(s-1)!0.5!(a-l-1)$) to ($(s-2)!0.5!(a-l-2)$);
						\draw[directededge,myGreen] ($(a-r-2)!0.5!(b-l-2)$) to ($(a-r-3)!0.5!(b-l-3)$);
						\draw[directededge,myGreen] ($(b-r-3)!0.5!(c-l-3)$) to ($(b-r-4)!0.5!(c-l-4)$);
						\draw[markeddirected={myOrange}{myGreen}] ($(c-r-1)!0.5!(d-l-1)$) to ($(c-r-2)!0.5!(d-l-2)$);
						\draw[markeddirected={myOrange}{myGreen}] ($(d-r-2)!0.5!(e-l-2)$) to ($(d-r-3)!0.5!(e-l-3)$);
						\draw[directededge,myGreen] ($(e-r-1)!0.5!(t-1)$) to ($(e-r-2)!0.5!(t-2)$);
						
						\node at ($(s-1)+(-0.1,0.1)$) {\tiny{1}};
						\node at ($(s-2)+(-0.1,0.1)$) {\tiny{2}};
						\node at ($(s-3)+(-0.1,0.1)$) {\tiny{3}};
						\node at ($(s-4)+(-0.1,0.1)$) {\tiny{4}};
						
						\node at ($(a-r-1)+(0.3,0.1)$) {\tiny{2}};
						\node at ($(a-r-2)+(0.3,0.1)$) {\tiny{1}};
						\node at ($(a-r-3)+(0.3,0.1)$) {\tiny{3}};
						\node at ($(a-r-4)+(0.3,0.1)$) {\tiny{4}};
						
						\node at ($(b-r-1)+(0.3,0.1)$) {\tiny{2}};
						\node at ($(b-r-2)+(0.3,0.1)$) {\tiny{3}};
						\node at ($(b-r-3)+(0.3,0.1)$) {\tiny{1}};
						\node at ($(b-r-4)+(0.3,0.1)$) {\tiny{4}};
						
						\node at ($(c-r-1)+(0.3,0.1)$) {\tiny{2}};
						\node at ($(c-r-2)+(0.3,0.1)$) {\tiny{3}};
						\node at ($(c-r-3)+(0.3,0.1)$) {\tiny{4}};
						\node at ($(c-r-4)+(0.3,0.1)$) {\tiny{1}};
						
						\node at ($(d-r-1)+(0.3,0.1)$) {\tiny{3}};
						\node at ($(d-r-2)+(0.3,0.1)$) {\tiny{2}};
						\node at ($(d-r-3)+(0.3,0.1)$) {\tiny{4}};
						\node at ($(d-r-4)+(0.3,0.1)$) {\tiny{1}};
						
						\node at ($(e-r-1)+(0.3,0.1)$) {\tiny{3}};
						\node at ($(e-r-2)+(0.3,0.1)$) {\tiny{4}};
						\node at ($(e-r-3)+(0.3,0.1)$) {\tiny{2}};
						\node at ($(e-r-4)+(0.3,0.1)$) {\tiny{1}};
				\end{tikzpicture}};
				\node (K4) at (4,0) {
					\begin{tikzpicture}
						\def\dist{1.3}
						\node[vertex,label=315:{$v_1$}] (v-1) at (0,0) {};
						\node[vertex,label=45:{$v_2$}] (v-2) at (0,\dist) {};
						\node[vertex,label=135:{$v_3$}] (v-3) at (-\dist,\dist) {};
						\node[vertex,label=225:{$v_4$}] (v-4) at (-\dist,0) {};
						
						\draw[markeddirected={myOrange}{black}] (v-1) to (v-2);
						\draw[markeddirected={myOrange}{black}] (v-2) to (v-3);
						\draw[markeddirected={myOrange}{black}] (v-2) to (v-4);
						\draw[directededge] (v-1) to (v-3);
						\draw[directededge] (v-1) to (v-4);
						\draw[directededge] (v-3) to (v-4);
						
						\node (label) at ($(v-1)!0.3!(v-4) +(0,-0.8)$) {$\Kdir{4}$};
				\end{tikzpicture}};
		\end{tikzpicture}}
		\caption{An example how the construction looks for finding a $\Kdir{4}$ butterfly minor in the given graph, the model for the vertex $v_2$ is highlighted in \textcolor{myOrange!90!red}{orange}.}
		\label{fig:tournament_minor_using_switches_ex}
	\end{figure}
	
	We divide the linkages $\mathcal{L}_0,\dots,\mathcal{L}_{\switchesfortournamentfunction{t}+1}$ into $t-1$ consecutive blocks $\mathcal{B}_1,\dots,\mathcal{B}_{t-1}$ such that $\mathcal{B}_i$ contains $t-i$ linkages.
	Let $\mathcal{B}_i = \Brace{\mathcal{L}_{i_1},\dots,\mathcal{L}_{i_{t-i}}}$ and $\mathcal{R}_{i_j} \coloneqq \Set{R^{a,a+1}_{i_j} \mid 1 \leq a < t}.$
	We consider the path $R^{f_i(i_j),f_i(i_j)},$ which starts in $M_i$ and ends in $M_{i+j}$ due to our construction earlier.
	We add its first edge as $e^{\variablestyle{out}}_{i+j}$ to the model of $v_i$ and its last edge as $e^{\variablestyle{in}}_i$ to the model of $v_{i+j}.$
	The remaining path is the model of $\Brace{i,i+j}.$
	
	As we collect all the in-edges for the model of $v_k$ within the blocks $\mathcal{B}_1,\dots,\mathcal{B}_{k-1}$ and only collect the out-edges in block $\mathcal{B}_k,$ all in-edges are met by $M_k$ before the out-edges, making the model of $v_i$ butterfly contractible.
	See~\cref{fig:tournament_minor_using_switches_ex} for an illustration of the~construction.
\end{proof}

Now, we can prove that for every transitive tournament there is a sufficiently large cross-row grid containing it as a butterfly minor. 

\begin{lemma}
	\label{lem:cross-row_grid_contains_tournament.}
	The \hyperlink{cross-row grid}{cross-row grid} of order $\switchesfortournamentfunction{t} \cdot t +2$ contains $\Kdir{t}$ as a butterfly minor.
\end{lemma}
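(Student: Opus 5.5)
We apply \cref{lem:tournament_using_switches} with the cross-row grid $\CrossRowGrid{n}$, where $n \coloneqq \switchesfortournamentfunction{t}\cdot t + 2$. The task is to locate, inside $\CrossRowGrid{n}$, $\switchesfortournamentfunction{t}$ pairwise disjoint switches on $t$ terminals each, the connecting linkages $\mathcal{L}_0,\dots,\mathcal{L}_{\switchesfortournamentfunction{t}+1}$, and the auxiliary paths $R^{a,a+1}_i$ that run between consecutive paths of each linkage. The natural candidate for a switch is a block of $t$ consecutive \emph{levels} of the cross-row: the rungs $a_j,b_{n-j+1}$ of the first row, together with the adjacent pieces of the cycles $C_j$. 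The added edges $\Brace{a_j,b_{n-j}}$ and $\Brace{a_j,b_{n-j+2}}$ let a path entering the first row on cycle $C_j$ leave it on cycle $C_{j+1}$ or $C_{j-1}$. Straight passage uses the ordinary row structure.

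First, I would fix the roles of the cycles. Each cycle $C_j$ with $j \in \Set{2,\dots,n-1}$ is cut just before the first row, turning it into a long directed path that starts right after the first row and ends at it. Those $n-2$ cycles are partitioned into $\switchesfortournamentfunction{t}$ groups of $t$ consecutive cycles, which is where the factor $\switchesfortournamentfunction{t}\cdot t$ and the slack $+2$ (the boundary cycles $C_1$ and $C_n$) come from.

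Second, I would make the switches disjoint by routing them through different columns rather than through the single cross-row. Going around the cylinder, the out-paths $\outPath_2,\dots$ and in-paths $\inPath_2,\dots$ move a path one cycle outwards or inwards. For $s \in [t-1]$, I would realise the transposition of terminals $s$ and $s+1$ as follows: the path on cycle $s$ goes out via the cross-edge $\Brace{a_j,b_{n-j}}$ to the next cycle, and the path on cycle $s+1$ goes in via the other cross-edge family. All other paths pass straight through the row along their own cycles. A cleaner alternative, which I would try first, is to take the $D_j$ to be nested rather than side by side: group $j$ uses only the cycles in its own band, and its cross-edges are the ones attached to that band. The bands are disjoint because each $a_j$ belongs to exactly one cycle. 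The linkage $\mathcal{L}_j$ is then obtained by following each band's cycles around and shifting outwards by $t$ cycles using out-paths $\outPath_x$ in a column away from the first row. The paths $R^{a,a+1}_i$ come from short segments of in-paths or out-paths in yet another column, connecting cycle $a$ to cycle $a+1$ within the band.

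The main obstacle is the switch property itself. Each $s$ needs a full linkage realising the transposition $(s\ s+1)$, while the paths for all other indices go straight; this requires checking that the two cross-edges at level $s$ can be used simultaneously without blocking the neighbouring paths. The cross-edges of $\CrossRowGrid{n}$ join $a_j$ to $b_{n-j\pm1}$, where one side's cycle index differs by one. So I expect the transposition to be realisable using the edges $\Brace{a_s,b_{n-s}}$ and $\Brace{a_{s+1},b_{n-s+1}}$, with the remaining paths traversing the row via $a_y \to \cdots \to b_{n-y+1}$ along their cycle. The same cross-row segment is reused by every switch in a band; this is harmless because \cref{lem:tournament_using_switches} only needs one linkage per $s$ to exist, not all at once. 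I would also verify that the counting $n = \switchesfortournamentfunction{t}\cdot t + 2$ leaves enough cycles for the connecting linkages to be internally disjoint from the switches, as \cref{lem:tournament_using_switches} requires, and that the linkages $\mathcal{L}_1,\dots,\mathcal{L}_{\switchesfortournamentfunction{t}}$ are pairwise disjoint. I expect this bookkeeping to be the delicate part.
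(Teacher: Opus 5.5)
Your nested-band plan is the paper's proof. Each switch is a block of $t$ consecutive levels of the single cross-row: straight passage uses the cycle edges $(a_y,b_{n-y+1})$, and the transposition $(s\ s+1)$ uses the two cross edges $(a_s,b_{n-s})$ and $(a_{s+1},b_{n-s+1})$, so the switch property is immediate because every path in it is a single edge; the linkages follow a band's cycles and move to the next band along radial paths, and the $\mathcal{R}_i$ come from one further radial path.

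The paper shifts inward along in-paths instead of outward along out-paths, which is symmetric. Each linkage needs $t$ distinct radial columns (not one), ordered so that the vertical pieces miss the cycle pieces of the other paths. The side-by-side variant in your second paragraph is not available, since cross edges exist only in the first row.
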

\begin{proof}
	We define $k \coloneqq \switchesfortournamentfunction{t}\cdot t +2.$
	Let $a_1,\dots,a_k$ be the vertices of $\outPath_1$ and $b_1,\dots,b_k$ be the vertices of $\inPath_1$ in order of appearance along the paths.
	We define $\switchesfortournamentfunction{t}$ many subgraphs $D_1,\dots,D_{\switchesfortournamentfunction{t}},$ where $D_i$ is the subgraph of $D$ containing the vertices $a_{k-\Brace{i-1}t},\dots,a_{k-it+1},b_{\Brace{i-1}t},\dots,b_{it}$ and all edges between them.
	Note that $D_i$ is a $\Brace{a_{k-\Brace{i-1}t},\dots,a_{k-it+1}}$-$\Brace{b_{\Brace{i-1}t},\dots,b_{it}}$-switch.
	
	We choose $\mathcal{L}_0$ to be the subpaths of $C_{k},\dots,C_{k-t+1}$ starting in $\inPath_{k}$ and ending in the vertices $a_{k},\dots,a_{k-t+1}.$
	For $\mathcal{L}_i$ with $1 \leq i \leq \switchesfortournamentfunction{t}$ we construct paths from $(b_{\Brace{i-1}t}, \dots,$ $b_{it})$
	along $(C_{\Brace{i-1}t}, \dots,$ $C_{it})$
	until meeting $(\inPath_{k-1-\Brace{i-1}t}, \dots,$ $\inPath_{k-it}),$
	then following the paths $(\inPath_{k-1-\Brace{i-1}t}, \dots,$ $\inPath_{k-it})$
	until meeting the cycles $(C_{k-\Brace{i}t}, \dots,$ $C_{k-\Brace{i+1}t+1})$
	which the paths of $\mathcal{L}_i$ then follow until finally reaching $(a_{k-\Brace{i}t}, \dots,$ $a_{k-\Brace{i+1}t+1}).$
	Next, we define the linkage $\mathcal{L}_{\switchesfortournamentfunction{t}+1}$ as the subpaths of $(C_{\Brace{\switchesfortournamentfunction{t}-1}t},\dots,$ $C_{\switchesfortournamentfunction{t}t})$ starting in $(b_{\Brace{\switchesfortournamentfunction{t}-1}t},\dots,$ $b_{\switchesfortournamentfunction{t}t})$ and ending on $\inPath_{2}.$
	
	Finally we construct families $\mathcal{R}_i$ for all $1 \leq i \leq \switchesfortournamentfunction{t}+1.$
	For $1 < i \leq \switchesfortournamentfunction{t}+1$ we obtain the subpaths between the paths in $\mathcal{L}_i$ from $\inPath_2.$
	And for $i=1$ we obtain them from $\inPath_k.$
	
	Then, we have everything we need to apply \cref{lem:tournament_using_switches} and thus, obtain the desired $\Kdir{t}$ as a butterfly minor.
\end{proof}

Clearly, for every cross-row grid there is bidirected clique containing it as a minor.

\begin{lemma}
	\label{lem:clique_contains_cross-row-wall}
	The bidirected clique $\K{\cliqueToCrossRowGridFunction{t}}$ contains the cross-row grid $\CrossRowGrid{t}$ as a butterfly minor.
\end{lemma}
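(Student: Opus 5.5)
The cross-row grid $\CrossRowGrid{t}$ is a finite simple digraph without loops, so we embed it directly into a large bidirected clique as a subgraph. The function $\cliqueToCrossRowGridFunction{}$ is not specified before the statement, so I take it to be at least the number of vertices of $\CrossRowGrid{t}$, namely $\cliqueToCrossRowGridFunction{t} \geq 2t^2$. Here $\CrossRowGrid{t}$ has the $t$ cycles $C_1,\dots,C_t$ of length $2t$ of the cylindrical grid of order $t$, plus extra edges between vertices of $\outPath_1$ and $\inPath_1$; it adds no new vertices.

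First I check that $\CrossRowGrid{t}$ has no parallel edges that would obstruct a subgraph embedding. The added edges $\Brace{a_j,b_{t-j}}$ and $\Brace{a_j,b_{t-(j-2)}}$ go from a vertex of $\outPath_1$ to a vertex of $\inPath_1$. These two paths are vertex-disjoint in the cylindrical grid, since $\outPath_1$ uses the vertices $v^i_1$ and $\inPath_1$ uses the vertices $v^i_2$. So no added edge is a loop. Moreover, for fixed $j$ the two targets $b_{t-j}$ and $b_{t-j+2}$ are distinct.

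An added edge might coincide with an existing grid edge, for example a cycle edge $\Brace{v^i_1,v^i_2}$. In that case the two edges are parallel, and in the bidirected clique they map to the same edge. This is harmless: the existence of a single edge suffices as long as the model of a multi-edge is not required. If parallel edges are to be kept distinct, subdivide such an edge once, using one extra clique vertex, and contract it back afterwards. This contraction is butterfly-contractible, since the subdivision vertex has in- and out-degree one. Budgeting $\cliqueToCrossRowGridFunction{t} \coloneqq 2t^2 + 2t$ covers this case.

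Next, fix any injection $\varphi$ from $\V{\CrossRowGrid{t}}$ (together with the subdivision vertices) into $\V{\K{\cliqueToCrossRowGridFunction{t}}}$. Since the bidirected clique contains both $\Brace{x,y}$ and $\Brace{y,x}$ for all distinct $x,y$, every edge $\Brace{u,v}$ maps to the edge $\Brace{\varphi(u),\varphi(v)}$. Distinct edges with distinct endpoint pairs get distinct images, so $\CrossRowGrid{t}$ (possibly subdivided) is a subgraph of the clique.

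Finally, a subgraph is a butterfly minor: take the model $\Fkt{\mu}{v} \coloneqq \Set{\varphi(v)}$, a trivial in-out-branching, and $\Fkt{\mu}{e}$ the image edge. In the subdivided case, contract the degree-two subdivision vertices. There is no real obstacle here; the only point needing care is the possible parallel-edge issue and the resulting choice of $\cliqueToCrossRowGridFunction{}$.
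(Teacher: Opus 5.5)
Your proposal is correct and is the same argument as the paper's: since $\CrossRowGrid{t}$ has $2t^2$ vertices, you map them bijectively onto the vertices of the bidirected clique and delete every edge not in the image, so $\CrossRowGrid{t}$ is a subgraph and hence a butterfly minor (with trivial in-out-branchings as branch sets). Your parallel-edge safeguard is never needed, because the added edges are exactly $(v^j_1,v^{j+1}_2)$ and $(v^j_1,v^{j-1}_2)$, and none of them coincides with a cycle, in-path or out-path edge of the grid, so the clique on $2t^2$ vertices already suffices, exactly as in the paper.
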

\begin{proof}
	This directly follows from the cross-row grid of order $t$ having $2t^2$ vertices.
	We fix a bijection $\varphi: \V{\K{\cliqueToCrossRowGridFunction{t}}} \to \V{\CrossRowGrid{t}}$ and then delete from $\K{\cliqueToCrossRowGridFunction{t}}$ every edge $\Brace{u,v}$ with $\Brace{\Fkt{\varphi}{u}, \Fkt{\varphi}{v}} \notin \E{\CrossRowGrid{t}}.$
	This yields the desired butterfly minor.
\end{proof}

Basically, a cross-row grid consists of many slices in a wall that each contain a local cross.
We formalise this concept by the following definition.

\begin{restatable}{definition}{CrossSliceDefinition}
	\label{def:cross_slice}
	A \define{cross} in a slice of a wall $W$ between $Q_i$ and $Q_{i+\ell}$ are two paths $J_1$ and $J_2$ with $\Start{J_1},\End{J_2} \in \V{Q_i}$ and $\Start{J_2},\End{J_1} \in \V{Q_{i+\ell}}$ and for the rows $\row_{a}$ containing $\Start{J_1},$ $\row_{a'}$ containing $\End{J_1},$ $\row_{b}$ containing $\Start{J_2}$ and $\row_{b'}$ containing $\End{J_2}$ we have $\max\Set{a,a'}-\min\Set{b,b'} > 1.$
	We call a slice with a cross in it a \define{cross~slice}.
\end{restatable}

In order to construct the cross-row grid as a minor in a given digraph it suffices to show that there is a wall in the graph that contains enough cross slices and still has some rows that remain untouched by the crosses.

\begin{lemma}
	\label{lem:many_cross_slices_yield_minor}
	Let $D$ be a digraph, $W$ a cylindrical wall in $D$ and $i \in\N.$
	If $W$ contains $t$ disjoint slices which each contain a cross, and there are $t$ rows in $W$ such that all $t$ crosses lie in a strip not containing these $t$ rows, then $W$ contains $\CrossRowGrid{t}$ as a butterfly minor.
\end{lemma}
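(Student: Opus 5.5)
We build the model of $\CrossRowGrid{t}$ directly: the cycles of the cylindrical grid come from the $t$ untouched rows, and the crossing edges between $\outPath_1$ and $\inPath_1$ come from the $t$ crosses.
Let $W_1',\dots,W_t'$ be the $t$ disjoint cross slices, ordered along the cycles of $W$. Let $J^s_1,J^s_2$ be the cross in $W_s'$, running between its left and right vertical paths. Let $S$ be a strip containing all crosses, and let $R_1,\dots,R_t$ be the $t$ rows of $W$ disjoint from $S$.

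\textbf{Step 1 (cycles and grid paths).} The $t$ concentric cycles $C_1,\dots,C_t$ of the cylindrical grid are modelled by $t$ of the vertical cycles $Q$-type cycles of $W$. More precisely, we pick one cycle of $W$ per cross slice, say a cycle $C$ of $W$ lying strictly between consecutive cross slices. We may also contract each slice into its left and right boundary. The in- and out-paths $\inPath_j,\outPath_j$ of $\CylGrid{t}$ are modelled by the rows $R_1,\dots,R_t$, restricted to the chosen cycles.

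This gives a subdivided cylindrical grid of order $t$ in $W-S$, except that the portion of each chosen cycle inside $S$ remains. That portion will serve as the grid path $\outPath_1$ (and $\inPath_1$) region where the extra edges live. Concretely, the subpaths of the cycles inside $S$ together with the parts of two rows of $S$ bounding the crosses play the role of row $\row_1$ of $\CrossRowGrid{t}$. Thus the vertices $a_j,b_j$ are modelled by the vertices where the $j$-th chosen cycle meets an out-row, respectively an in-row, of $S$.

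\textbf{Step 2 (crossing edges).} The cross in slice $s$ provides two paths: one from its left path to its right path, and one back. They start and end on different rows because $\max\{a,a'\}-\min\{b,b'\}>1$. We route them along the vertical paths $Q_i$ and $Q_{i+\ell}$ of the slice, which are parts of consecutive model cycles. This turns the cross into the two edges $(a_j,b_{t-j})$-type and $(a_{j+1},b_{t-j+1})$-type of $\CrossRowGrid{t}$: one edge from the $j$-th cycle to the $(j+1)$-th, and one the other way.

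For each model vertex $a_j,b_j$ we must form an in-out-branching containing the incident cycle segments and the cross endpoints. We arrange that all in-edges enter before all out-edges leave. This is done by choosing the in-/out-row of $S$ relative to the rows $a,a',b,b'$ of the cross, which is where the row condition on crosses is used.

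\textbf{Main obstacle.} The hard part is making the model disjoint and butterfly-contractible. The cross paths $J_1,J_2$ may wander anywhere in $D$, not just inside the slice, and may intersect the rows and cycles used in the model. We handle this by rerouting: first we cut the cross paths at their last intersection with the slice boundary, so the crosses live essentially within the slice and the strip $S$. Disjointness from $R_1,\dots,R_t$ is guaranteed by the hypothesis. Disjointness from the other chosen cycles follows because the chosen cycles lie between slices.

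If these reroutings fail, we would first try to shrink each slice to the minimal subpath structure containing its cross. The ordering of endpoints required for contractibility is the delicate case analysis, and we expect most of the work to go there.
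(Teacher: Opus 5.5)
Your high-level picture matches the paper's: concentric cycles of $W$ become the grid cycles, the $t$ untouched rows become the ordinary rows, the boundary rows of the crossing strip become the special row, and the crosses become the crossing edges. But the proposal never constructs the minor, and the concrete choices you do make contradict each other. In Step~1 you take the model cycles to lie \emph{strictly between} consecutive cross slices. The cross of slice $s$, however, runs between that slice's own boundary cycles $Q_{j_s}$ and $Q_{j_s+\ell_s}$, so with your choice no cross touches a model cycle. Step~2 then silently treats the slice boundaries as model cycles instead. If the boundaries are the model cycles, two things are still missing:
\begin{itemize}
\item The right boundary of slice $s$ and the left boundary of slice $s+1$ are different cycles of $W$, since the slices are disjoint. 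You never say how they become the \emph{same} grid cycle.
\item The cross endpoints sit on various rows inside the strip. You never say how they are moved onto a single out-path and a single in-path, with every branch set a valid in-out-branching.
\end{itemize}
You explicitly defer this (``we expect most of the work to go there''), so the central step has no proof. Your suggested normalisation, cutting the cross paths at their last meeting with the slice boundary, is also unsafe. It can move endpoints to other rows and destroy the row condition that makes the two paths a cross. ``Contracting each slice into its left and right boundary'' is not a specified butterfly operation either.

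The missing ingredient is the explicit contraction scheme in the paper. Let the strip be bounded by $\outPath_a$ and $\inPath_b$.
\begin{enumerate}
\item Delete every in- and out-path strictly inside the strip.
\item On each slice boundary, contract the vertical segment from $\outPath_a$ down to the start of the cross path leaving that boundary. Also contract the segment from the end of the cross path entering that boundary down to $\inPath_b$. Once the intermediate rows are gone, the internal wall vertices of these segments have in- and out-degree one, so the contractions are butterfly contractions. The contracted top segment is an in-out-branching rooted at its vertex on $\outPath_a$: external in-edges enter there, and out-edges leave from there and from the cross start. This puts all cross endpoints on the single row $\outPath_a\cup\inPath_b$ and settles the in/out ordering you postponed.
\item Delete all vertical cycles except $Q_{j_1},\dots,Q_{j_t},Q_{j_t+\ell_t}$.
\item Contract the segments of $\outPath_a$ and $\inPath_b$ joining the right boundary of slice $s$ to the left boundary of slice $s+1$. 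This merges the two boundaries at the special row.
\end{enumerate}
Each cross then becomes a pair of crossing edges between consecutive model cycles, and together with the $t$ untouched rows this gives $\CrossRowGrid{t}$.
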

\begin{proof}
	Let $S_i,\dots,S_t$ be $t$ \hyperref[def:slice]{slices} such that $S_i$ is the slice between $Q_{j_i}$ and $Q_{j_i + \ell_i}$ and contains a cross $J^i_1$ and $J^i_2.$
	Let $W_{\row}$ be the \hyperref[def:strip]{strip} containing all crosses but not the rows $\row'_1,\dots,\row'_{t}$ and let $\row_a$ and $\row_b$ such that $W_{\row}$ is the strip between $\row_a$ and $\row_b.$
	We construct a new row containing $t$ crosses by valid butterfly minor operations within the subwall between $\outPath_a$ and $\inPath_b$ as follows.
	First, we delete all in- and out-paths within $W_{\row}$ except for $\outPath_a$ and $\inPath_b.$
	Next, for every slice $S_i$ we can contract the following paths into single vertices:
	\begin{enumerate}
		\item the subpath of $Q_{j_i}$ starting on $\outPath_a$ and ending in $\Start{J^i_1},$ we refer to this contraction vertex as $\Start{J^i_1}',$
		\item the subpath of $Q_{j_i}$ starting in $\End{J^i_2}$ and ending on $\inPath_b,$ we refer to this contraction vertex as $\End{J^i_2}',$
		\item the subpath of $Q_{j_i+\ell_i}$ starting on $\outPath_a$ and ending in $\Start{J^i_2},$ we refer to this contraction vertex as $\Start{J^i_2}',$ and
		\item the subpath of $Q_{j_i+\ell_i}$ starting in $\End{J^i_1}$ and ending on $\inPath_b,$ we refer to this contraction vertex as $\End{J^i_1}'.$
	\end{enumerate}
	This leaves us with one row consisting of out-path $\outPath_a$ and in-path $\inPath_b$ containing $t$ \hyperref[def:cross_slice]{crosses}.
	In order to make this a proper cross-row grid, we delete all vertical paths but $Q_{j_1},\dots,Q_{j_t}$ and $Q_{j_t+\ell_t}.$
	Now, for every $i \in [t],$ we can contract the subpath of $\outPath_a$ that starts in $\Start{J^i_2}'$ and ends in $\Start{J^{i+1}_1}'$ into a single vertex.
	Similarly, for every $i \in [t],$ we can contract the subpath of $\inPath_b$ that starts in $\End{J^{i+1}_2}'$ and ends in $\End{J^{i}_1}'$ into a single~vertex.
	
	This newly constructed row together with the rows $\row'_1,\dots,$ $\row'_{t}$ and the vertical paths $Q_{j_1},\dots,$ $Q_{j_t}$ and $Q_{j_t+\ell_t}$ now yields the claimed $\CrossRowGrid{t}$ butterfly minor.
\end{proof}

\section{Drawings and weak renditions}

Now that we have seen how the cross-row grid relates to the traditional excluded minors, we give some context for surfaces and embeddings.
Formally, a \define{surface} is a compact 2-dimensional manifold with or without a boundary.
However, the only two surfaces we explicitly use are the sphere $\sphere$ and the torus.

\begin{definition}
	A \define{drawing (with crossings)} of a digraph $D$ on a surface $\Sigma$ is a tuple $\Gamma = \Brace{U,V,E,\varphi}$ such that
	\begin{itemize}
		\item $\varphi: V \cup E \to \V{D}\cup \V{E}$ is a bijection such that $\Reduct{\varphi}{E}$ is a bijection between $E$ and $\E{D}$ and $\Reduct{\varphi}{V}$ is a bijection between $V$ and $\V{D}$
		\item $V \subseteq U \subseteq \Sigma$ and $V \cup \bigcup_{e\in E} e = U$
		\item for every $e\in E,$ $e = \Fkt{h_e}{(0, 1)},$ where $h_e : [0, 1] \to U$ is a homeomorphism onto its image with $\Fkt{h_e}{0}, \Fkt{h_e}{1} \in V$
		\item $V$ is disjoint from every $e \in E$
		\item $\Brace{u,v}\in\E{D}$ if and only if $\Fkt{\varphi}{\Fkt{h_e}{0}} = u$ and $\Fkt{\varphi}{\Fkt{h_e}{1}} = v$
		\item if $e,e' \in E$ are distinct, then $e\cap e'$ is finite.
	\end{itemize}
	
	If $e,e' \in E$ with $e \cap e' \neq \emptyset$ and $e \neq e',$ then we say $e$ and $e'$ \define{cross}.
	A drawing is \define{cross-free} if there are no two elements in $E$ that cross.
\end{definition}

We say a drawing $\Gamma_1 = \Brace{U_1,V_1,E_1,\varphi_1}$ of a digraph $D$ is \defineSubindexExtraTarget{consistent}{drawing (with crossings)}{drawingconsistent} with a drawing $\Gamma_2 = \Brace{U_2,V_2,E_2,\varphi_2}$ of a subgraph $D' \subseteq D$ if $U_2 \subseteq U_1,$ $V_2 \subseteq V_1,$ $E_2\subseteq E_1$ and $\Fkt{\varphi_1}{v} = \Fkt{\varphi_2}{v}$ for all $v \in \V{D'}.$

We often consider drawings of parts of some digraph into a closed disk, in that case we are interested in which vertices are drawn into the boundary in order to describe interaction with the remaining graph.
To this end we introduce the following definition, which basically generalises the concept of \emph{societies} by Kawarabayashi, Thomas and Wollan~\cite{kawarabayashi2020quickly}.

\begin{restatable}[Society]{definition}{SocietyDefinition}
	\label{def:society}
	Let $\Omega$ be a cyclic order of the elements of some set and let $\V{\Omega}$ denote this set.
	A \define{society} is a pair \MarginOnly{$\Brace{D,\Omega}$}$\Brace{D,\Omega},$ where $D$ is a digraph, and $\Omega$ is a cyclic order with $\V{\Omega}\subseteq\V{D}.$
	
	A \define{cylindrical society} is a tuple \MarginOnly{\resizebox{\marginparwidth}{!}{$\Brace{D,\Omega_1,\Omega_2}$}} $\Brace{D,\Omega_1,\Omega_2},$ where $D$ is a digraph and $\Omega_1,$ $\Omega_2$ are cyclic orders with $\V{\Omega_i}\subseteq\V{D}$ for both $i\in[2]$ and $\V{\Omega_1}\cap\V{\Omega_2} = \emptyset.$
	
	A subset $X \subseteq \V{\Omega}$ is called a segment if there are no vertices $x_1,x_2 \in X$ and $y_1,y_2 \in \V{\Omega}$ such that $x_1,y_1,x_2,y_2$ occur in $\Omega$ in that order.
	The ordering $\Omega$ naturally induces a linear ordering on its segments. 
	We write \Margin{$a\Omega b$} for the unique segment of $\Omega$ that has $a$ as its first vertex and $b$ as its last vertex.
\end{restatable}

The following definition helps us to describe the structure inside a society, especially non-planar structure.

\begin{definition}[Transactions]
	Let $\Brace{D,\Omega}$ be a society.
	A path $P$ is an \emph{$\Omega$-path} if $\V{P} \cap \V{\Omega} = \Set{\Start{P},\End{P}},$ that is, $P$ is a $\V{\Omega}$-path.
	A linkage $\mathcal{P}$ in $D$ is called a \define{transaction} in $\Brace{D,\Omega}$ if every $P \in\mathcal{P}$ is an {$\Omega$-path} and there are two disjoint segments $X$ and $Y$ of $\Omega$ such that $\Set{\Start{P}\mid P\in \mathcal{P}} \subseteq X$ and $\Set{\End{P}\mid P \in \mathcal{P}} \subseteq Y.$
	The \define{depth} of $\Brace{D,\Omega}$ is defined as the maximum order of a transaction in $\Brace{D,\Omega}.$
	
	The two endpoints $\Start{P}$ and $\End{P}$ of an $\Omega$-path $P$ split $\Omega$ into two segments.
	If another $\Omega$-path $P'$ has its start-vertex in the one and its end-vertex in the other segment, then $P$ and $P'$ build a \define{cross} in $\Brace{D,\Omega}.$
	A transaction is called \define{planar} if no two paths in it build a cross.
\end{definition}

For societies in undirected graphs having no cross and being 4-connected suffices to ensure a planar embedding of the graph into a disk.
This is not sufficient in digraphs.
Still, we would like to reduce the non-planarity of the graphs to smaller, more controllable regions.
The next definition allows us to fix a part of a given graph that allows for a drawing without crossings in the sphere.

\begin{restatable}[Skeleton]{definition}{SkeletonDefinition}
	\label{def:skeleton}
	Let $D$ be a digraph, $r \in \N$ and $W$ be a wall of order $r$ in $D.$
	A skeleton of $D$ is a tuple $\skeleton = \Brace{\Gamma,W,\mathsf{L}}$ with
	\begin{enumerate}
		\item $\mathsf{L}$ is a family of linkages in $D,$
		\item $\Gamma$ is a planar drawing of the graph $\rigidgraph{\skeleton} \coloneqq  W \cup \bigcup_{\mathcal{L} \in \mathsf{L}} \mathcal{L},$
		\item if $\mathsf{L} = \emptyset,$ then the two faces bound by the wall perimeters are called the \emph{big faces} of $\skeleton,$ otherwise there exists a linkage $\mathcal{L} \in \mathsf{L}$ such that 
		\begin{equation*}
			\skeleton' = \Brace{\Reduct{\Gamma}{W\cup\Brace{\mathsf{L}\setminus\Set{\mathcal{L}}}}, W, \Brace{\mathsf{L}\setminus\Set{\mathcal{L}}}}
		\end{equation*}
		is a skeleton and $\mathcal{L}$ is a planar transaction on a big face $f$ of $\skeleton'.$
		In the ladder case, the \emph{big faces} of $\skeleton$ are the big faces of $\skeleton'$ without $f$ but adding all faces bound by $f$ and one of the two outer paths of $\mathcal{L}.$
	\end{enumerate}
	We call the skeleton $\skeleton$ \defineSubindex{centred at the wall $W$}{skeleton}.
	We call \MarginOnly{$\rigidgraph{\skeleton}$}$\rigidgraph{\skeleton}$ the \define{rigid subgraph} of $\skeleton.$
	The \defineSubindex{order}{skeleton} of $\mathcal{S}$ is half the minimum order of an element in $\mathsf{L} \cup \Set{W}.$
\end{restatable}

Intuitively, in order to describe that non-planar behaviour of the digraph around the rigid subgraph of a skeleton is restricted, we want to say that closed curves in the rigid part have a separating property.
We cannot demand them to yield proper separations, so we describe a slightly weaker notion where we cut out the curve together with some part of the rigid graph around it.

A \define{noose} of a digraph $D$ within a fixed drawing $\Gamma$ into a surface $\Sigma$ is a closed curve that bounds a disk and only intersects $\Gamma$ in vertices of $D.$

\begin{definition}
	Let $\skeleton \coloneqq \Brace{\Gamma,W,\mathsf{L}}$ be a skeleton of a digraph $D$ in the sphere.
	Let $\Gamma^{+}$ be a drawing of $D$ in $\Sigma$ that is \hyperlink{drawingconsistent}{consistent} with $\Gamma$ and $C$ a noose of $\rigidgraph{\skeleton}$ within $\Gamma^{+}.$
	Then, the graph $\rigidgraph{\skeleton}-C$ consists of at most two weakly 2-connected components $H'_1$ and $H'_2.$
	For $i \in [2],$ let $H_i$ be the weakly 2-connected component not containing $C$ obtained by removing the face of $H'_i$ that is no face in $\Gamma$ from $\rigidgraph{\skeleton}.$
	We define \MarginOnly{$H_i^{+}$}$H_i^{+}$ to be the subgraph of $D$ induced by $\V{H_i} \cup \bigcup_{\substack{f \text{ face in }\Reduct{\Gamma^{+}}{H_i}\text{ and in }\Gamma^{+}}} \Set{v \mid \text{the vertex $v$ is drawn into $f$ by $\Gamma^{+}$}}.$
\end{definition}

Using this we can define our demands on a drawing of the digraph around a wall in the sphere.

\begin{restatable}[$\Sigma$-decomposition]{definition}{SigmaDecompositionDefinition}
	\label{def:sphere_decomposition}
	\label{def:sigma_decomposition}
	Let $\skeleton \coloneqq \Brace{\Gamma,W,\mathsf{L}}$ be a skeleton of a digraph $D$ in the surface $\Sigma$ and $A \subseteq \V{D}$ a set of vertices in $D$ such that $\V{\rigidgraph{\skeleton}} \subseteq \V{D}\setminus A.$
	Let $\Gamma^{+}$ be a drawing of the strongly connected component of $D-A$ containing $\rigidgraph{\skeleton}$ in $\Sigma$ that is consistent with $\Gamma.$
	The tuple $\rho = \Brace{\Gamma^{+},\skeleton}$ is a \defineSorted{$\Sigma$-decomposition}{Sigma-decomposition} of $D$ in $\Sigma$ with apex set $A,$ if
	For all undirected cycles $C$ in $\rigidgraph{\skeleton}$ there is no directed path from $H^{+}_1$ to $H^{+}_2$ as well as no directed path from $H^{+}_2$ to $H^{+}_1.$
	
	Let $N(\rho),$ the set of \define{nodes} of $\rho,$ be the set of all vertices in $\rigidgraph{\skeleton}.$
	If $\Gamma = \Brace{U,V,E,\varphi},$ we refer to $\varphi^{-1}$ by $\varphi_{\rho}.$
	
	Let $\Sigma$ and $\Sigma'$ be two surfaces, where $\Sigma'$ has strictly higher genus.
	Let $\rho = \Brace{\Gamma^{+},\skeleton}$ be a $\Sigma$-decomposition and $\rho'= \Brace{\Gamma'^{+},\skeleton'}$ be a $\Sigma'$-decomposition such that $\rigidgraph{\skeleton} = \rigidgraph{\skeleton'}.$
	If $\Gamma'^{+}$ is consistent with $\Gamma^{+},$ then we say $\rho'$ is \defineSubindex{centred at}{Sigma-decomposition@$\Sigma$-decomposition} $\rho.$
\end{restatable}

There are certain crosses we cannot forbid by excluding a cross-row grid, so we need a notion of flatness that accommodates these crosses.
In order to achieve this, we define renditions into a disk that internally are nearly planar except for small exceptions which cover the mentioned crosses.

\begin{restatable}[Weak rendition]{definition}{WeakRenditionDefinition}
	\label{def:weak_rendition}
	Let $\Brace{D,\Omega}$ be a society and $\Delta$ be a closed disk, possibly with an open hole.
	A weak rendition of $\Brace{D,\Omega}$ into $\Delta$ is a $\sphere$-decomposition $\rho = \Brace{\Gamma^{+},\skeleton=\Brace{\Gamma,W,\emptyset}}$ such that
	\begin{enumerate}
		\item $\V{\Gamma^{+}} \subseteq \Delta,$
		\item one of the cyclic orderings of $\boundary{\Delta}$ maps to the image of $\Fkt{\varphi_{\rho}}{N(\rho) \cap \boundary{\Delta}} = \V{\Omega}.$
	\end{enumerate}
	
	Now, let $\Delta'$ be obtained from a closed disk $\Delta''$ by removing an open disk disjoint from the boundary of $\Delta''$ and let $\Brace{D,\Omega_1,\Omega_2}$ be a cylindrical society.
	Let $B_1$ and $B_2$ be the two closed curves in $\Delta'$ whose union equals $\boundary{\Delta'}.$
	Observe that for each $i\in[1,2]$ the curve $B_i$ bounds a closed disk $\Delta_i'$ with an open hole that contains $\Delta'.$
	A \define{weak rendition in $\Delta'$} is a $\sphere$-decomposition $\Brace{\Gamma^{+},\skeleton=\Brace{\Gamma,W,\emptyset,\emptyset}}$ such that for both $i\in[2],$ $\delta$ is a cylindrical rendition of $\Brace{D,\Omega_i}$ in the disk $\Delta_i'.$
	We say that $\Brace{D,\Omega_1,\Omega_2}$ has a \emph{weak rendition in the disk} if there exists $\Delta'$ as above such that $\Brace{D,\Omega_1,\Omega_2}$ has a weak rendition in $\Delta'.$
\end{restatable}

These tools enable us to describe drawings of digraphs that are consistent with the unique embedding of some wall they contain and additionally does have restricted non-planar behaviour with respect to this wall.

\section{Flat wall theorem}
\label{sec:flat_wall}

Having introduced weak renditions we can define our notion of flatness, which is slightly different from the notion of \cref{def:weakly_flat}.
In particular, it uses two directed separations instead of one undirected separation and it uses our concept of weak renditions, that is, removing a \hyperlink{noose}{noose} in the wall separates the two remaining parts of the wall.

\begin{definition}[Bridge]
	\label{def:bridge}
	Let $H$ be a subgraph of a digraph $D.$
	A weakly connected component $C$ in $D-H$ is called an \defineSorted{$H$-bridge}{Hbridge} if
	\begin{enumerate}
		\item there is a non-empty set $O \subseteq \V{C}$ such that for every $o \in O$ there is a vertex $h \in H$ with $\Brace{o,h}\in\E{D},$ $h$ is called an \define{out-attachment},
		\item there is a non-empty set $I \subseteq \V{C}$ such that for every $i \in I$ there is a vertex $h \in H$ with $\Brace{h,i}\in\E{D},$ $h$ is called an \define{in-attachment}, and
		\item for every vertex $x \in \V{C}$ there is a directed path $P$ in $C$ such that $\Start{P} \in I,$ $\End{P} \in O$ and $x \in \V{P}.$ \qedhere
	\end{enumerate}
\end{definition}

\begin{restatable}[Flat wall]{definition}{FlatWallDefinition}
	\label{def:flat_wall}
	Let $D$ be a digraph, $A\subseteq\V{D}$ be a set of vertices and $W \subseteq D-A$ be a wall of order $k+4.$
	We define $W^{-} \subseteq W$ to be the wall of order $k$  such that $W^{-}$ is disjoint from $Q_1,Q_2,Q_{k+3}$ and $Q_{k+4}.$
	Moreover, we define the \define{border} of $W$ as \MarginOnly{\resizebox{\marginparwidth}{!}{$\border{W}$}}$\border{W} \coloneqq W - W^{-}.$
	Let $D'$ be the strongly connected component of $D$ containing $W.$
	\renewcommand{\marginnotevadjust}{-2.8ex}
	The \define{compass} of $W,$ written \MarginOnly{\resizebox{\marginparwidth}{!}{$\compass{W}$}} $\compass{W}$ is the union of the border of $W$ and all $W^{-}$-bridges in $D'- \border{W}.$
	\renewcommand{\marginnotevadjust}{0ex}
	We say that $W$ is a \define{flat wall under $A$} if 
	\renewcommand{\labelenumi}{\textbf{\theenumi}}
	\renewcommand{\theenumi}{(F\arabic{enumi})}
	\begin{enumerate}[labelindent=0pt,labelwidth=\widthof{\ref{last-item-flat-wall}},itemindent=1em]
		\item \label{flat:apex_set} $\V{W}\cap A=\emptyset,$
		\item \label{flat:separation} there are two directed separations $\Separation{Y_1}{X_1}{}{r}$ and $\Separation{X_2}{Y_2}{}{r}$ in $D'$ such that $X_1$ and $X_2$ both contain $W$ and $Y_1 \cap X_1 = A \cup \border{W} = X_2 \cap Y_2$ and additionally in $D'-\Brace{A \cup \border{W}}$ every vertex in $X_1$ is reachable from $W^{-}$ and every vertex in $X_2$ reaches $W^{-},$
		\item \label{flat:rendition} the cylindrical society $\Brace{\compass{W},\Omega_1,\Omega_2}$ with the sets $\V{\Omega_1}=\V{Q_1}$ and $\V{\Omega_2}=\V{Q_{k+4}}$ has a \hyperref[def:weak_rendition]{weak rendition} $\rho = \Brace{\Gamma^{+},\skeleton=\Brace{\Gamma,W,\emptyset}}$ in the disk,
		\item \label{flat:torus} $D'$ has a torus decomposition centred at $\rho.$ \label{last-item-flat-wall}
	\end{enumerate}
	In case $A = \emptyset,$ we say that $W$ is \define{flat}.
\end{restatable}

Our main theorem is a directed flat wall theorem that excludes the cross-row grid as a butterfly minor.

\begin{restatable}[Directed flat wall theorem]{theorem}{directedFlatWallThm}
	\label{thm:directed_flat_wall}
	There exist functions $\WallFkt{} \colon \N \times \N \rightarrow \N$ and $\WallApexFkt{} \colon \N \rightarrow \N$ such that for all integers $r,t \geq 1$ and all digraphs $D$ that do not contain $\CrossRowGrid{t}$ for every $\WallFkt{r,t}$-wall $W$ in $D$ there exist a set $A \subseteq \V{D}$ with $\Abs{A} \leq \WallApexFkt{t}$ and an $r$-wall $W'\subseteq W-A$ which is flat under $A.$
\end{restatable}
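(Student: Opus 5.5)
We follow the strategy of Giannopoulou, Kawarabayashi, Kreutzer and Kwon for \cref{thm:directed_nearly_weakly_flat_wall}, as adapted by Giannopoulou and Wiederrecht, and replace the step that tolerates cross-rows by one that kills them using \cref{lem:many_cross_slices_yield_minor}. Start from the given $\WallFkt{r,t}$-wall $W$. Use the triadic partition (\cref{def:triadic_partition}) to split $W$ into many disjoint slices and strips. A slice is \emph{bad} if, in $D$ minus the rest of $W$, it carries a cross in the sense of \cref{def:cross_slice}, or a jump between far-apart bricks, that survives deleting a bounded number of vertices.

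\textbf{Step 1: bounding the bad slices.} For each bad slice we get a cross. If there are $t$ pairwise disjoint bad slices whose crosses avoid $t$ common rows, \cref{lem:many_cross_slices_yield_minor} yields $\CrossRowGrid{t}$, a contradiction. Two things must be arranged here. The crosses found in different slices may reuse vertices outside $W$. We handle this with the standard Menger/half-integral linkage argument: either we get $t$ crosses that are disjoint after rerouting along the wall, or a separator of size bounded in $t$ meets all of them, and we put that separator into $A$. We also need the crosses to avoid a strip of $t$ rows. For this we take $\WallFkt{r,t}$ large enough that, by pigeonhole over the strips of a triadic partition, some strip of height $\gg t$ is untouched. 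The outcome is an apex set $A_1$ of size bounded in $t$ and a large subwall, namely a slice-and-strip block, in which no slice of $W-A_1$ contains a cross or a long jump.

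\textbf{Step 2: flatness from the absence of crosses.} Pick a subwall $W'$ of order $r+4$ deep inside this block. Define $\compass{W'}$ and verify the conditions of \cref{def:flat_wall} in turn.

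\begin{itemize}
\item For \ref{flat:separation}: take $X_1$ to be everything reachable from $W'^{-}$ in $D'-(A\cup\border{W'})$, and $X_2$ to be everything reaching it. The absence of long jumps guarantees that these sets do not leak past $\border{W'}$. Otherwise a path leaving the compass and re-entering far away produces, together with wall segments, a cross in some slice.
\item For \ref{flat:rendition}: draw the wall in the cylinder and place each $W'^{-}$-bridge into the union of the faces containing its attachments. Local jumps only attach within one brick or tile. The remaining local crosses are exactly those allowed by a weak rendition, because the noose condition of the $\sphere$-decomposition only forbids paths between the two sides of an undirected cycle of the rigid graph. Such a path, combined with wall paths, is again a cross slice.
\item For \ref{flat:torus}: extend the drawing by attaching the rest of $D'$ through the two perimeters, using the cylindrical structure (the cyclic direction around the wall) to obtain a torus decomposition centred at $\rho$.
\end{itemize}

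\textbf{Main obstacle.} I expect Step 1 to be the hardest. We must convert a large number of \emph{non-disjoint} local obstructions into either $t$ disjoint cross slices sharing $t$ free rows, or a small hitting set, with the bound depending on $t$ only and not on $r$. My first attempt is an Erd\H{o}s--P\'osa type argument on the crosses: build an auxiliary packing problem over the slices, and use that a cross confined to a tile of bounded width can be rerouted inside the tile. The verification of \ref{flat:rendition}, in particular that every residual crossing is confined to the noose structure, is the second delicate point.
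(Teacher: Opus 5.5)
There is a genuine gap, and it sits exactly where you put your ``main obstacle'': Step~1 is announced but not proved, and the tool you invoke does not do the job.

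\textbf{Why Step~1 does not go through.} Menger's theorem and \cref{thm:half_integral} concern linkages between fixed terminal sets. A cross is a pair of paths with interleaving ends on two cycles, so there is no ``standard'' dichotomy between $t$ disjoint crosses and a bounded hitting set. Moreover, if badness is defined slice by slice (``survives deleting a bounded number of vertices''), each good slice comes with its own separator. The union of these separators over the block you keep is then not bounded in $t$.

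What is actually needed is a single apex set, bounded in $t$, that destroys all \emph{long} jumps over a large slice. This is not a local problem. Many pairwise disjoint long jumps do not give disjoint cross slices, because the slice witnessing the cross of a long jump spans both of its ends, and these slices overlap.

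\textbf{The missing ingredient.} The paper uses \cref{lem:directed_long_jumps} (Lemma~4.9 of Giannopoulou et al.). It says that $f_P$ disjoint jumps leaving centres of well-separated tiles, with ends at mutual $W$-distance at least four, yield one of two outcomes. Either there is a $\K{t}$ grasped by $W$, or there is a linear sequence of tiles in one strip, each jumping into the slice that separates it from its predecessor. The latter means local crosses in disjoint slices (\cref{claim:long_jumps_imply_minor}).

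To obtain such linkages or small separators, the paper proceeds as follows.
\begin{enumerate}
\item It applies \cref{thm:directedlocalmenger} in the type~I auxiliary digraphs (one colour class of tile centres versus perimeter vertices of $W_2$).
\item It applies \cref{thm:X_paths} in the type~II auxiliary digraphs (jumps between tiles of the same colour).
\item It does this over all tilings, both parametrisations and all colours (\cref{lem:longjumps1,lem:longjumpsphase2}).
\item It iterates this over $2^{11+8f_P}f_P$ rounds with nested families of marked tiles.
\end{enumerate}
As a result, a surviving long jump meets tiles marked in many distinct rounds and splits into many disjoint long jumps (\cref{claim:nolongjumps}). Nothing in your proposal replaces this.

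\textbf{Local crosses need no apex vertices.} You try to put separators for \emph{local} crosses into $A$. That is unnecessary, and it cannot work globally: a few narrow slices may each carry unboundedly many pairwise disjoint crosses, and no set bounded in $t$ hits them. Such slices must be avoided, not hit.

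In the paper, once long jumps are gone, no Erd\H{o}s--P\'osa statement for crosses is needed. One takes $t$ slices $N_i'$ with buffers $N_{i,L},N_{i,R}$.
\begin{itemize}
\item A path from $N_i$ escaping $N_i'$ is short and gives a cross inside $H_i$, lying in a strip of bounded height (\cref{lem:local_jump_yields_cross_slice}).
\item The components attached to distinct $N_i$ are automatically disjoint, because there are no long jumps.
\item The $t$ free rows come from the bounded height of these strips, not from a pigeonhole over the three strips of a triadic partition.
\end{itemize}
So either all $t$ candidates are crossed and \cref{lem:many_cross_slices_yield_minor} gives $\CrossRowGrid{t}$, or one $N_i$ is cross-free and \cref{lem:no_cross_slice_implies_flat} makes it flat. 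Your Step~2 corresponds to this last lemma and is fine in spirit.
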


We refer to the functions $\WallFkt{}$ and $\WallApexFkt{}$ from \cref{thm:directed_flat_wall} as such globally.
Please note that this yields a in some respect stronger statement than \cref{thm:directed_nearly_weakly_flat_wall,thm:directed_weakly_flat_wall}, because it refers to every wall in the digraph.
While \cref{thm:directed_nearly_weakly_flat_wall,thm:directed_weakly_flat_wall} provide the existence of their respective flat wall in the given digraph $D$, our result \cref{thm:directed_flat_wall} provides the flat wall inside \emph{every} wall that is large enough.
This way one can start out with any large enough wall in a given digraph and can be sure that it contains a flat wall, which is a desirable property especially for algorithmic usage.
Additionally \cref{thm:directed_flat_wall} obtains a wall that does allow for any cross-rows by excluding a strongly connected digraphs as a butterfly minor, thus combining the features of \cref{thm:directed_nearly_weakly_flat_wall,thm:directed_weakly_flat_wall}.
We mostly follow the arguments and techniques from \cite{giannopoulou2020directed}.
Some refinements are due to Gianopoulou and Wiederrecht~\cite{giannopoulou2021flat}.
In some places the proofs need adjustments propagating through non-trivial steps of the arguments, therefore, we present the whole proof containing parts from both~\cite{giannopoulou2020directed, giannopoulou2021flat}.

\begin{restatable}[Tiling]{definition}{TilingDefinition}
	\label{def:tiling}
	A \define{tiling} is a family of pairwise disjoint \hyperref[def:tile]{tiles}, and a tiling is said to \define{cover} a subwall $W'$ of $W$ if every branch vertex of $W'$ occurs in one of the tiles of the tiling.
	For every function $f_w: \N \to \N$ and all $\xi,$ $\xi' \in [\Fkt{f_w}{t}+1]$ we define the tiling $\tiling_{W,k,\Fkt{f_w}{t},\xi,\xi'}$ with tiles of width $\Fkt{f_w}{t}.$
	In order to do so, we define two function, the \define{column function}
	\begin{equation*}
		\columnFunction{p}{\xi,f_w} \coloneqq \Brace{k+1-\xi} + \Brace{p-1}\Brace{2\cdot \Fkt{f_w}{t}+1},
	\end{equation*}
	and the \define{row function}
	\begin{equation*}
		\rowFunction{q}{\xi',f_w} \coloneqq \Brace{1+\xi'} + \Brace{q-1}\Brace{2\cdot \Fkt{f_w}{t}+1}.
	\end{equation*}
	For both the column and the row function we omit $f_w,$ $\xi$ and $\xi'$ from the indices if they are clearly provided by the context.
	Then, define
	\begin{align*}
		\tiling_{W,k,\Fkt{f_w}{t},\xi,\xi'} \coloneqq{}
		\Big\{\tile_{\columnFunction{p}{},\rowFunction{q}{},\Fkt{f_w}{t}} \mid &1 \leq p \leq \Ceil{\frac{k+\xi-1}{2\cdot \Fkt{f_w}{t}+1} + 1},\\
		& 1 \leq q \leq \Ceil{\frac{3k - \Brace{1+\xi'}}{2\cdot \Fkt{f_w}{t}+1} + 1}\Big\}.
	\end{align*}
	See \cref{fig:tiling} for an illustration.
\end{restatable}

\begin{figure}[thb]
	\centering
	\resizebox{.9\textwidth}{!}{%
    \includegraphics{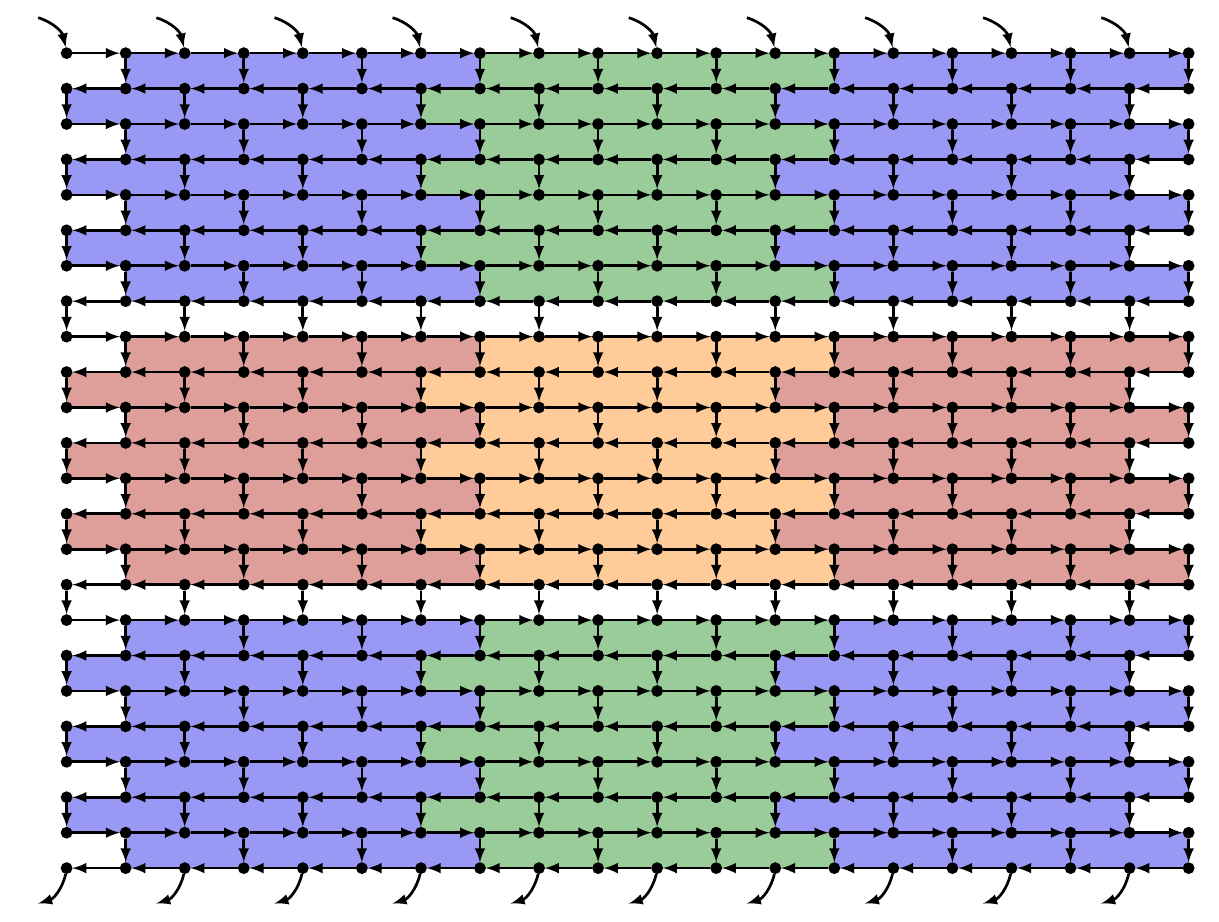}
    }
	\caption{A tiling of a wall together with a four-colouring of it.
		We also see that the yellow tile in the middle is surrounded by eight tiles of different colours.}\label{fig:tiling}
\end{figure}

Note that every \hyperref[def:tiling]{tiling} $\tiling_{W,k,\Fkt{f_w}{t},\xi,\xi'}$ covers $W_2.$
Moreover, every brick of $W_2$ that lies between the two paths of $\row_i$ for some $i \in [3k]$ is the centre of some tile $\tile'$ of some tiling $\tiling' \in \tiling_{W,k,\Fkt{f_w}{t},\xi,\xi'}.$
Hence, if we switch the parametrisation of $W,$ we are able to find in total $2\Brace{\Fkt{f_w}{t}+1}^2$ many tilings that cover $W_2,$ and every brick of $W_2$ is the centre of some tile in one of these tilings.
This number of tilings becomes relevant in the proof of \cref{thm:directed_flat_wall}.

A \defineExtraindex{colouring}{of a tiling} of $\tiling$ is a partition of $\tiling$ into four classes, namely $\Class_1,\Class_2,\Class_3,$ and $\Class_4$ as follows.
For every $i \in \left[\Ceil{\frac{k+\xi-1}{2\Fkt{f_w}{t}+1}}+1\right]$ and every $j\in \left[\Ceil{\frac{3k-\xi'-1}{2\Fkt{f_w}{t}+1}}+1\right]$ we assign to $\tile_{\columnFunction{i}{},\rowFunction{j}{},\Fkt{f_w}{t}}$ the colour $\Brace{i\bmod 2}+2\Brace{j\bmod 2}+1.$
This means that to tiles where $\columnFunction{i}{}$ and $\rowFunction{j}{}$ are even we assign the colour one, to tiles where $\rowFunction{j}{}$ is even but $\columnFunction{i}{}$ is odd we assign the colour three, and so on, see \cref{fig:tiling} for an example.
Hence, every column and every row is two-chromatic, and between each pair of tiles from the same colour that share a row or a column, there is a tile of a different colour that separates those tiles in their respective row or column.
Additionally, the eight tiles surrounding a tile $\tile$ are all of a different colour than $\tile$ itself.

We use tilings in several different ways, and sometimes it is necessary to \say{zoom out} of our current wall, \ie to abstract over some of the horizontal paths and vertical cycles in order to obtain a more streamlined version of our wall.

\begin{definition}[Walls from a Tiling]
	\label{def:tiling_wall}
	Let $k,d \in \N$ be positive integers, $W$ a cylindrical wall of order $3k$ with the triadic partition $\Triadic = (W, k, W_1, W_2, W_3, W^1, W^2,$ $W^3),$ and $\tiling$ a \hyperref[def:tiling]{tiling} of width $d$ that covers $W_2.$
	Moreover, let $\widetilde{W}$ be some \hyperref[def:slice]{slice} of $W_2$ and let $I_Q$ be the largest set of integers such that for every $i\in I_Q$ the vertical cycle $Q_i$ contains vertices of a tile from $\tiling$ which intersects $\widetilde{W}.$
	Let \MarginOnly{$\Fkt{\widetilde{W}}{\tiling}$}$\Fkt{\widetilde{W}}{\tiling}$ be the union of the cycles $Q_i$ with $i \in I_Q,$ and the paths $\InducedSubgraph{\inPath_j}{\CondSet{Q_h}{h\in I_Q}}$ and $\InducedSubgraph{\outPath_j}{\CondSet{Q_h}{h\in I_Q}}$ for every $j\in [3k].$
	We call $\Fkt{\widetilde{W}}{\tiling}$ the \index{extension of $\protect{\widetilde{W}}$ that covers $\tiling$}\emph{extension of $\widetilde{W}$ that covers $\tiling.$}
	
	Now, let $\Set{\Class_1,\dots,\Class_4}$ be a four-colouring of $\tiling$ and $i\in [4]$ be a fixed colour.
	Then, let $J_Q \subseteq [3k]$ be the largest set of integers such that for all $j \in J_Q$ the vertical cycle $Q_j$ of $\Fkt{\widetilde{W}}{\tiling}$ does not contain a vertex of some tile from $\Class_i.$
	Similarly, let $J_P\subseteq [3k]$ be the largest set of integers such that for every $j \in J_P,$ none of the two paths from $\row_j$ contains a vertex of a tile from $\Class_i.$
	
	By \MarginOnly{$\InducedSubgraph{\widetilde{W}}{\tiling,i}$}$\InducedSubgraph{\widetilde{W}}{\tiling,i}$ we denote the subgraph of $W$ induced by the union of the cycles $Q_i'$ with $i'\in J_Q,$ and the paths $\InducedSubgraph{\inPath_j}{\CondSet{Q_h}{h\in J_Q}}$ and $\InducedSubgraph{\outPath_j}{\CondSet{Q_h}{h\in J_Q}}$ for every $j \in J_P.$
	We say that $\InducedSubgraph{\widetilde{W}}{\tiling,i}$ is the \defineSubindex{$i$-th $\tiling$-slice}{slice} of $\widetilde{W}.$
\end{definition}

Note that the width of $\Fkt{\widetilde{W}}{\tiling}$ is at most by $2d$ greater than the width of $\widetilde{W}.$
In $\InducedSubgraph{\widetilde{W}}{\tiling,i},$ we essentially cut away the tiles of $\Class_i.$
This operation gives us a slice $W'$ of some cylindrical wall for which the perimeter of every tile in $\Class_i$ is the perimeter of some brick.
Next, we find a tiling of $W'$ such that every tile of $\Class_i$ that belongs to $\widetilde{W}$ is captured by the centre of some tile in the new tiling.

\begin{definition}[Tier II Tiling]\label{def:tilingII}
	Let $t,k,k'\in\N$ be positive integers with $k\geq k'$ and $f\colon\N\rightarrow\N$ be some function.
	Let $W$ be a cylindrical wall of order $3k$ with its \hyperref[def:triadic_partition]{triadic partition} $\Triadic = \Brace{W,k,W_1,W_2,W_3,W^1,W^2,W^3},$ and \hyperref[def:tiling]{$\tiling=\tiling_{W,k,f,\xi,\xi'}$} for some $\xi,\xi' \in [\Fkt{f}{t}+1],$ as well as $\Set{\Class_1,\dots,\Class_4}$ be a four-colouring of $\tiling$ and $i\in [4]$ be a fixed colour.
	Moreover, let $\widetilde{W}$ be a \hyperref[def:slice]{slice} of $W_2$ of width $k'$ such that no \hyperref[def:tile]{tile} of $\Class_i$ contains a vertex of $\per{\widetilde{W}}$ and $\widetilde{\tiling}$ be the collection of all tiles from $\tiling$ that contain a vertex of $\widetilde{W}.$
	
	The \define{tier II tiling} of width $\Fkt{f}{t}$ for the slice $\InducedSubgraph{\widetilde{W}}{\tiling,i},$ denoted by \MarginOnly{\resizebox{\marginparwidth}{!}{$\TierIITiling{\tiling,i,\Fkt{f}{t}}{}$}} $\TierIITiling{\tiling,i,f}{\widetilde{W}},$ is defined as the unique tiling of $\InducedSubgraph{\widetilde{W}}{\tiling,i}$ such that every $\tile \in\Class_i\cap\widetilde{\tiling}$ is in the interior of the centre of some tile of $\TierIITiling{\tiling,i,\Fkt{f}{t}}{\widetilde{W}}.$
\end{definition}

Since every tile in $\tiling$ consists of $2\Fkt{f}{t}+2$ path pairs, the tiling $\TierIITiling{\tiling,i,\Fkt{f}{t}}{\widetilde{W}}$ is well defined and does in fact cover all of $\InducedSubgraph{\widetilde{W}}{\tiling,i}.$

Now, we fix some terminology for paths in the graph that cause non-planar behaviour with respect to the wall.
Let $k,w\in\N$ be positive integers, $W$ be a wall of order $k$ and $W'$ be a \hyperref[def:slice]{slice} of $W.$
We call a $C_1$-$C_k$- or $C_k$-$C_1$-path $P$ a \emph{jump over all of $W$} if it is internally disjoint from $W.$
A \define{perimeter jump} over a wall $W$ is a path that starts or ends at a tile of $W$-distance at least $2$ to $\per{W}$ and is otherwise disjoint of $W.$
A $\V{W'}$-path $P$ is called a \define{jump over $W'$} if $\E{P} \cap \E{W'}= \emptyset.$
We say that a directed $\V{W'}$-path $P$ is a \defineSorted{$w$-long jump over $W'$}{w-long jump over W} if for all $\xi,\xi'\in [w+1]$ the start-vertex and the end-vertex of $P$ belong to distinct tiles $\tile_s$ and $\tile_t$ of the tiling $\tiling_{W,k,w,\xi,\xi'}.$

\subsection{Selected results from \texorpdfstring{\cite{giannopoulou2020directed}}{Giannopoulou et al.}}
\label{sec:two_lemmata}

In this \namecref{sec:two_lemmata} we state some results from \cite{giannopoulou2020directed} which provide the corner stones for our proof of \cref{thm:directed_flat_wall}.

\begin{theorem}[Giannopoulou et al.~\cite{giannopoulou2020directed}]
	\label{thm:X_paths}
	Let $D$ be a digraph and $X\subseteq\V{D}.$
	For all positive $k \in\N,$ there are $k$ pairwise vertex disjoint $X$-paths in $D,$ or there exists a set $S \subseteq \V{D}$ of size at most $2k$ such that every $X$-path in $D$ contains a vertex of $S.$
	
	Furthermore, there is a polynomial time algorithm which, given a digraph $D$ and a set $X \subseteq \V{D},$ outputs $k$ pairwise disjoint $X$-paths, or a set $S\subseteq\V{D}$ of size at most $2k$ as above.
\end{theorem}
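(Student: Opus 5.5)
The plan is a Menger-type duality for directed $X$-paths with the classical factor two, proved via an auxiliary "split" digraph and ordinary directed Menger, and then rounded. First, I would build an auxiliary digraph $D'$ that separates the role of $X$-vertices as starts and as ends. Each $x\in X$ is replaced by two vertices: $x^{+}$, which keeps all out-edges of $x$, and $x^{-}$, which keeps all in-edges of $x$. Vertices outside $X$ are split in the usual way into $v_{in}$ and $v_{out}$ with a single edge $(v_{in},v_{out})$. Then $X$-paths in $D$ correspond to directed paths in $D'$ from $X^{+}=\{x^{+}\}$ to $X^{-}=\{x^{-}\}$ that avoid $X^{\pm}$ internally. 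The one caveat is that an $x^{+}$–$x^{-}$ path with the same $x$ corresponds to a closed walk through $x$, which is still a legitimate object because a cycle through $x$ contains an $X$-path (or is itself one). For technical reasons I would instead forbid $x^{+}\to x^{-}$ routings only when necessary; these are harmless.

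Second, I would apply the directed vertex-Menger theorem in $D'$ between $X^{+}$ and $X^{-}$, where the vertices of $X^{\pm}$ are deletable at unit cost and each non-$X$ vertex costs one via its split edge. This yields either $m$ disjoint $X^{+}$–$X^{-}$ paths or a separator $S'$ of size less than $m$.

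Third, I would translate back to $D$, choosing $m=2k$. A family of $2k$ disjoint paths in $D'$ projects to a family of $X$-paths in $D$ in which every vertex lies in at most two paths, namely the $x$ that is used once as $x^{+}$ and once as $x^{-}$. I would then view the projected family as a digraph on $X$, with an arc $a\to b$ for each path. Every vertex has in- and out-degree at most one, so the family decomposes into directed paths and cycles. Taking every other arc gives pairwise disjoint $X$-paths, at least $\lceil 2k/3\rceil$ of them (the worst case being triangles). This is too few, so the count needs strengthening: I would take $m$ larger and split $D'$ more carefully. A cleaner route is to pick a maximum family of disjoint $X$-paths $\mathcal P$ of size less than $k$ and show that $S=\bigcup_{P\in\mathcal P}\{\Start{P},\End{P}\}$ together with a Menger cut in $D'-(\mathcal P)$ hits everything.

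More precisely, I would argue Gallai-style. Let $\mathcal P$ be a maximum family of disjoint $X$-paths, with $|\mathcal P|<k$. Apply Menger in the split digraph relative to the endpoints, and use augmenting-path arguments to show that a minimum cut has size at most $2|\mathcal P|$. The hard part is the augmenting step, where an alternating walk must yield $|\mathcal P|+1$ disjoint $X$-paths; I expect this to be the main obstacle and would attack it with the standard residual-graph argument from Gallai's $A$-paths theorem, adapted to directions. Since we only need the factor $2$ rather than an exact min–max, it suffices to show that if the $X^{+}$–$X^{-}$ connectivity in $D'$ is at least $2k+1$ then $k$ disjoint $X$-paths exist. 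Polynomiality follows because every step is a flow computation plus greedy selection.
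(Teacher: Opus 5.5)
The paper gives no proof of this statement; it imports it from Giannopoulou et al. Your argument therefore has to stand on its own, and it has a genuine gap. Every version of your plan compares the $X^{+}$--$X^{-}$ connectivity of the split digraph $D'$ with the maximum number $\nu$ of disjoint $X$-paths, and that comparison fails in two ways.

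First, $x^{+}$--$x^{-}$ paths are not harmless: a cycle meeting $X$ only in $x$ contains no $X$-path. Let $D$ be the disjoint union of $m$ cycles $x_i\to u_i\to x_i$ with $X=\{x_1,\dots,x_m\}$. Then $D'$ has $m$ disjoint $X^{+}$--$X^{-}$ paths, but $D$ has no $X$-path at all, so your ``every other arc'' count is $0$, not $\lceil 2k/3\rceil$. Adding a vertex $w$, a terminal $y$ and arcs $u_i\to w$, $w\to y$ gives $\nu=\tau=1$ while the connectivity stays $m$. Menger cannot ``forbid these routings when necessary''. The true quantity $\nu$ is the maximum number of disjoint $X^{+}$--$X^{-}$ paths in $D'$ that use at most one of $x^{+},x^{-}$ for each $x\in X$, and this side constraint is not a capacity constraint.

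Second, even without such loops the relaxation is too weak. Take $t$ disjoint directed triangles with $X=V(D)$. The $X$-paths are exactly the arcs, so $\nu=t$ and $\tau=2t$, while the connectivity in $D'$ is $3t$. For $t=3$, $k=4$ this refutes your final reduction (``connectivity at least $2k+1$ implies $k$ disjoint $X$-paths''). For $k=t+1\geq 4$ it also shows that no minimum cut of $D'$ can serve as the required transversal, since such a cut has size $3t>2k$.

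The ``cleaner route'' fails as well. Take $X=\{a,b,x,y\}$ with arcs $a\to v\to b$ and $x\to v\to y$. The family $\{avb\}$ is maximum, $D'-V(\mathcal{P})$ has no arcs, and $\{a,b\}$ misses the $X$-path $xvy$. In general, maximality means $D-V(\mathcal{P})$ contains no $X$-path, so a cut there only separates single-terminal cycles (and can be large). Meanwhile, $X$-paths through interior vertices of members of $\mathcal{P}$ are not met at all.

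What remains is the augmenting step you defer as ``the main obstacle'', and it is the entire content of the theorem. A residual-graph argument of the kind used for Menger's theorem cannot supply it. For $X=V(D)$ the $X$-paths are exactly the arcs, and disjoint families of them are the matchings of the underlying undirected graph, so the problem contains non-bipartite matching. More generally, the side-constrained linkage problem above is a matroid-parity problem for a gammoid defined by $D'$, with lines $\{x^{+},x^{-}\}$.

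A correct proof therefore needs an ingredient that genuinely enforces the ``start or end, but not both'' constraint at each terminal. This is a directed analogue of Gallai's theorem on $A$-paths, and the factor $2$ does not come out of a single Menger computation. Your polynomial-time claim depends on the same missing step.
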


\begin{theorem}[Giannopoulou et al.~\cite{giannopoulou2020directed}]
	\label{thm:half_integral}
	Let $k\in\N$ be a positive integer, $D$ be a digraph, and $X,Y\subseteq\V{D}.$
	If $\mathcal{P}$ is a half-integral $X$-$Y$-linkage of order $2k$ in $D,$ then there exists an $X$-$Y$-linkage $\mathcal{J}$ of order $k$ such that $\V{\mathcal{J}}\subseteq\V{\mathcal{P}}.$
\end{theorem}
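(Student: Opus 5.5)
The plan is a reduction to a flow-type statement about paths between two vertex sets of a bipartite-like auxiliary digraph. The tool to use is Menger's theorem for directed graphs, combined with a fractional double-counting argument.

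The first step is to build an auxiliary digraph $D^\ast$. Take two disjoint copies of $D$, an \say{in-copy} and an \say{out-copy}, and split every vertex $v$ into $v^{-}$ and $v^{+}$ joined by the edge $(v^{-},v^{+})$ of capacity one. Add a super-source $s$ with edges to every $x^{-}$ for $x\in X$, and a super-sink $t$ with edges from every $y^{+}$ for $y\in Y$. Then integral $X$-$Y$-linkages of order $m$ in $\InducedSubgraph{D}{\V{\mathcal{P}}}$ correspond exactly to $m$ internally disjoint $s$-$t$-paths in the split digraph of $\InducedSubgraph{D}{\V{\mathcal{P}}}$. This reduces the claim to a vertex-capacity max-flow question.

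The second step is to exhibit a fractional flow of value $k$. Send flow $\frac{1}{2}$ along each path of $\mathcal{P}$. Because $\mathcal{P}$ is half-integral, every vertex lies on at most two paths, so every split edge $(v^{-},v^{+})$ carries at most $1$. The total value is $\frac{1}{2}\cdot 2k=k$.

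Two subtleties must be handled. First, a path of $\mathcal{P}$ may pass through $X$ or $Y$ vertices internally. This is harmless, since flows allow it and we only need a linkage, not $X$-paths. Second, several paths may start at the same $x\in X$. This is also fine, because the capacity on $x$ still bounds the total flow through $x$ by $1$.

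The third step is integrality. By the max-flow min-cut theorem with integral capacities, there is an integral flow of value at least $\lceil k\rceil=k$ supported on edges that carry positive fractional flow. Such edges lie inside $\InducedSubgraph{D}{\V{\mathcal{P}}}$, and in fact inside $\bigcup\mathcal{P}$. Decompose this integral flow into $k$ paths, then remove any cycles. The result is $k$ vertex-disjoint $X$-$Y$-paths $\mathcal{J}$ with $\V{\mathcal{J}}\subseteq\V{\mathcal{P}}$.

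Equivalently, one can argue via Menger's theorem directly. Suppose there were an $X$-$Y$-separator $S\subseteq\V{\mathcal{P}}$ in $\bigcup\mathcal{P}$ with $\Abs{S}<k$. Every one of the $2k$ paths of $\mathcal{P}$ must meet $S$, and each vertex of $S$ lies on at most two of them. So at most $2\Abs{S}<2k$ paths meet $S$, a contradiction. Hence Menger's theorem in the digraph $\bigcup\mathcal{P}$ yields the $k$ disjoint paths.

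I expect the main point of care to be the second subtlety above: ensuring the Menger separator is taken in the subgraph $\bigcup\mathcal{P}$, so that the resulting linkage uses only vertices of $\mathcal{P}$. I also need to check that paths sharing endpoints in $X$ or $Y$ are correctly counted by the two-per-vertex bound. I would present the short Menger counting argument as the proof.
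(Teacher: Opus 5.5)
The paper gives no proof of this statement and simply imports it from Giannopoulou et al. Your Menger counting argument is correct and is the standard proof: in $\bigcup\mathcal{P}$, a separator $S$ with $|S|\le k-1$ would have to meet all $2k$ paths of $\mathcal{P}$, but it meets at most $2|S|\le 2k-2$ of them, so Menger yields $k$ disjoint $X$-$Y$-paths using only vertices of $\mathcal{P}$. The flow/LP-integrality version is an equivalent reformulation and can be dropped (its description of the split digraph via ``two copies of $D$'' is also muddled), so presenting only the Menger argument, as you plan, is the right choice.
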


The following is a combination of Lemmas 4.3 to 4.8 from \cite{giannopoulou2020directed} and a proof can be found in the proof of Lemma 4.9 in \cite{giannopoulou2020directed}.
The only difference between Lemma 4.9 from \cite{giannopoulou2020directed} and the statement below is that we extract the last subcase of Case 1 in its proof as a potential outcome.
In fact the statement contains two lemmata that are merely two analogue cases of the same situation.
We mark the changes one needs to perform to obtain the second statement in \textcolor{myLightblue!40!blue}{blue} and in parentheses.

\begin{lemma}[Giannopoulou et al.~\cite{giannopoulou2020directed}]
	\label{lem:directed_long_jumps}
	There exist functions $f_w\colon\N\rightarrow\N,$ $f_P\colon\N\rightarrow\N,$ and $f_W\colon\N\rightarrow\N$ such that for every $t\in\N$ the following holds.
	Let 
	\begin{itemize}
		\item $D$ be a digraph,
		\item $W$ be a cylindrical wall of order $3k$ with $k \geq \Fkt{f_W}{t}$ in $D,$
		\item $\Triadic = \Brace{W,k,W_1,W_2,W_3,W^1,W^2,W^3}$ be the \hyperref[def:triadic_partition]{triadic partition} of $W,$ and
		\item \hyperref[def:tiling]{$\tiling=\tiling_{W,k,\Fkt{f_w}{t},\xi,\xi'}$} for some $\xi,\xi'\in[\Fkt{f_w}{t}+1].$
	\end{itemize}
	If there exists a subfamily $\tiling'$ of $\tiling$ and a family $\Jumps$ of pairwise disjoint directed paths in $D$ with the following properties:
	\begin{enumerate}
		\item Every member of $\Jumps$ is a $\V{W}$-path, 
		\item $\Abs{\tiling'}=\Abs{\Jumps}=\Fkt{f_P}{t},$
		\item for every $\tile_{\columnFunction{p}{},\rowFunction{q}{},\Fkt{f_w}{t}} \neq \tile_{\columnFunction{p'}{},\rowFunction{q'}{},\Fkt{f_w}{t}} \in \tiling'$ we have $\max\Set{\Abs{p-p'},\Abs{q-q'}}\geq 2,$
		\item there exists a bijection $\JumpsLeaving{} \colon \tiling' \rightarrow \Jumps$ such that the start-vertex of the path $\JumpsLeaving{\tile}$ belongs to the \hyperref[def:tile]{centre} of $\tile$ for every $\tile \in \tiling',$ \textcolor{myLightblue!40!blue}{(there exists a bijection $\JumpsArriving{} \colon \tiling' \rightarrow \Jumps$ such that the end-vertex of the path $\JumpsArriving{\tile}$ belongs to the centre of $\tile$ for every $\tile \in \tiling',$)}
		\item for all $\tile \in \tiling',$ where $\V{\tiling'}=\bigcup_{\tile'\in\tiling'}\V{\tile'}$ the intersection $\V{\JumpsLeaving{\tile}} \cap \V{\tiling'}$ contains exactly the end-vertex of $\JumpsLeaving{\tile}$ \textcolor{myLightblue!40!blue}{($\V{\JumpsArriving{\tile}} \cap \V{\tiling'}$ contains exactly the start-vertex of $\JumpsArriving{\tile}$)}, and finally
		\item the end-vertices \textcolor{myLightblue!40!blue}{(start-vertices)} of the paths in $\Jumps$ are of mutual \hyperref[def:Wdistance]{$W$-distance} at least four. 
	\end{enumerate}
	Then, at least one of the following is true.
	\renewcommand{\labelenumi}{\textbf{\theenumi}}
	\renewcommand{\theenumi}{(\roman{enumi})}
	\begin{enumerate}
		\item \label{directed_long_jumps:i} $D$ has a $\K{t}$-butterfly minor grasped by $W,$
		\item \label{directed_long_jumps:ii} there exists a family of tiles $\tiling''\subseteq\tiling'$ all contained in a single \hyperref[def:strip]{strip} $S\subseteq W$ of height equal to the height of the tiles in $\tiling$ such that
		\begin{itemize}
			\item we can number $\tiling''=\Set{\tile_1,\dots,\tile_h}$ such that $S-\tile_i$ has one component containing exactly the tiles $\tile_1,\dots,\tile_{i-1}$ for each $i\in [h],$
			\item $\Abs{\tiling''} \geq \Fkt{f_P}{t}^{\frac{1}{4}},$
			\item for every $i\in[h-1]$ the tiles $\tile_i$ and $\tile_{i+1}$ are separated in $W$ by a \hyperref[def:slice]{slice} of width equal to the width of the tiles in $\tiling,$ and
			\item there is a family $\Jumps'\subseteq\Jumps$ with $\Abs{\Jumps'}=\Abs{\tiling''}$ such that for each $\tile \in\tiling''$ we have $\JumpsLeaving{\tile}\in\Jumps'$ \textcolor{myLightblue!40!blue}{($\JumpsArriving{\tile}\in\Jumps'$)}, and
			\item for each $i\in[h]$ the end-vertex of $\JumpsLeaving{\tile_i}$ \textcolor{myLightblue!40!blue}{(start-vertex of $\JumpsArriving{\tile_i}$)} lies in the component of $S-\tile_i$ that contains no tiles of $\tiling''$ if $i=1,$ or in the slice of $S$ separating $\tile_{i-1}$ and $\tile_i$ otherwise.
		\end{itemize}
		\item \label{directed_long_jumps:iii} there exists a family of tiles $\tiling''\subseteq\tiling'$ all contained in a single \hyperref[def:strip]{strip} $S\subseteq W$ of height equal to the height of the tiles in $\tiling$ such that $\tiling''$ and $S$ meet the properties of outcome \cref{directed_long_jumps:ii} after switching the parametrisation of $W.$
	\end{enumerate}
\end{lemma}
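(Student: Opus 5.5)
We would not reprove this from scratch. The plan is to follow the proof of Lemma~4.9 in \cite{giannopoulou2020directed}, which is assembled from Lemmas~4.3 to~4.8 there, and track where the argument must terminate early. We describe the \say{leaving} version with $\JumpsLeaving{}$; the \say{arriving} version follows verbatim by reversing all edges of $D$ and exchanging the roles of start- and end-vertices.

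\textbf{Step 1: reduce to a grid-like pattern of jumps.} Choose $\Fkt{f_P}{t}$ large enough for repeated Ramsey-type thinning. Each jump $\JumpsLeaving{\tile}$ starts in the centre of $\tile$ and ends somewhere in $W$, outside $\tiling'$ except at its end-vertex. Its end lies at $W$-distance at least four from the others. We classify every jump by the relative position of its end-vertex with respect to its start tile, using the row index and column index of the target tile in the tiling. By pigeonhole we keep a subfamily of size a fixed root of $\Fkt{f_P}{t}$ in which all jumps behave \say{the same}. There are two cases.

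\textbf{Step 2 (Case 1): the jumps leave their strip.} Suppose many jumps end far from their start in the row direction, or in an unstructured way. Then we use the routing freedom of the wall. The centres of the tiles are connected to the tile perimeters, and distinct tiles are separated by tiles of other colours, because condition~3 forces spacing $\geq 2$. This yields disjoint linkages inside $W$ that combine the jumps into a clique. Concretely, we pick $t$ groups of about $t$ jumps each. The tiles' out-branchings and in-branchings built from the rows and the cycles $Q_i$ form the models $\Fkt{\mu}{v}$, and each jump provides an edge between two models. Each model contains a branch vertex of $W$, which gives outcome \cref{directed_long_jumps:i} with the clique grasped by $W$. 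This is exactly the content of Lemmas~4.3 to~4.7 of \cite{giannopoulou2020directed}.

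\textbf{Step 3 (Case 1, last subcase): the surviving pattern.} The only pattern that does not yield a $\K{t}$ is the following. The jumps' tiles all lie in one strip $S$ of tile height, and each jump ends \say{just behind} its start: in the slice between the previous tile and itself, or beyond the first tile. In Lemma~4.9 of \cite{giannopoulou2020directed} this configuration is used further, together with the tournament argument. Here we stop at this point and extract it as outcome \cref{directed_long_jumps:ii}. The situation where the jumps point in the opposite cyclic direction is outcome \cref{directed_long_jumps:iii}, obtained by mirroring the parametrisation.

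The Ramsey steps lose at most a fourth root in total, which accounts for the bound $\Abs{\tiling''} \geq \Fkt{f_P}{t}^{1/4}$. We obtain the ordering $\tile_1,\dots,\tile_h$ along $S$ and the separating slices of tile width by keeping every other tile in the chosen direction.

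\textbf{Main obstacle.} The hard part is Step~2. We must verify that the linkages routed through $W$ to connect jump endpoints into in-out-branchings stay disjoint from each other and from the jumps. This relies on condition~5 (jumps avoid $\tiling'$ internally) and condition~6 (the $W$-distance four between endpoints). Since this is verbatim from \cite{giannopoulou2020directed}, the real work is bookkeeping: confirm that each subcase other than the extracted one ends in a grasped $\K{t}$, and that the extracted subcase gives exactly the listed properties.
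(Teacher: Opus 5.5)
Your proposal takes the same route as the paper. The paper also gives no independent proof: it defers to the proof of Lemma~4.9 in Giannopoulou et al.\ (assembled from their Lemmas~4.3--4.8), stops at the last subcase of Case~1, records that configuration as outcome~(ii) (resp.\ (iii) for the mirrored parametrisation), and treats the arriving version as the symmetric analogue. The differences are cosmetic: the paper cites Lemmas~4.3--4.8 rather than 4.3--4.7, and your finer case description (e.g.\ calling Case~1 the case where the jumps leave their strip) is your own reconstruction rather than anything the paper relies on.
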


We fix the functions $f_w,$ $f_P,$ and $f_W$ from \cref{lem:directed_long_jumps} and, by using the bounds provided by the original proofs~\cite{giannopoulou2020directed}, we obtain the following rough estimates:
{
\renewcommand{\labelenumi}{\textbf{\theenumi}}
\renewcommand{\theenumi}{(\roman{enumi})}
\begin{enumerate}
	\item \label{def:f_w} $\Fkt{f_w}{t}=2^9t^{10},$
	\item \label{def:f_P} $\Fkt{f_P}{t}=2^7t^8,$ and
	\item \label{def:f_W} $\Fkt{f_W}{t}=2^{32+t^{30}}.$
\end{enumerate}}

\Cref{lem:directed_long_jumps} is already powerful enough to guarantee a large cross-row grid as a butterfly minor in case we find many long jumps over a sufficiently large wall.
This is due to the following proof sketch, which we formalise over the remainder of the \namecref{sec:flat_wall}.
In case \cref{lem:directed_long_jumps} yields the existence of a $\K{\cliqueToCrossRowGridFunction{t}}$ butterfly minor, this minor contains a cross-row grid of order $t$ as a butterfly minor, by \cref{lem:clique_contains_cross-row-wall}.
Otherwise, \cref{lem:directed_long_jumps} produces many pairwise disjoint slices of the wall $W,$ mutually far apart from each other, and each of them having a neighbouring slice together with a fairly long jump into this neighbouring slice.
Such a slice and its neighbour together yield a slice of twice the width that contains a cross.
By \cref{lem:many_cross_slices_yield_minor}, this yields a cross-row grid as a butterfly minor.

\subsection{Removing long jumps}

In order to provide some formal basis to the proof sketch from before, we prove two auxiliary results, each providing ways to find cross-row grid minors from long jumps over our wall.


\begin{definition}[Auxiliary Digraph Type \rom{1}]
	\label{def:auxiliary_digraph_I}
	Let $t,k,k',w\in\N$ be positive integers such that $k \geq  k' \geq 2\Fkt{f_W}{t}+4\Fkt{f_P}{t}\cdot\Brace{2w+1},$ $w\geq 2\Fkt{f_w}{t},$ and $\xi,\xi'\in [w+1].$
	Let $D$ be a digraph containing a cylindrical wall $W$ of order $3k$ with its \hyperref[def:triadic_partition]{triadic partition} $\Triadic=\Brace{W,k,W_1,W_2,W_3,W^1,W^2,W^3},$ and a tiling \hyperref[def:tiling]{$\tiling=\tiling_{W,k,w,\xi,\xi'}$}.
	Let $\Set{\Class_1,\dots,\Class_4}$ be a four-colouring of $\tiling,$ $i\in[4]$ and $W'\subseteq W$ be a \hyperref[def:slice]{slice} of width $k'$ of $W_2.$
	At last, let us denote by $\tiling'$ the family of tiles from $\tiling$ that share a vertex with $W'$ and let $\Class_i' \coloneqq \tiling' \cap \Class_i.$
	Then, \MarginOnly{$\Fkt{D^{\rom{1}}_i}{W'}$}$\Fkt{D^{\rom{1}}_i}{W'}$ is the digraph obtained from $D$ by performing the following construction steps for every $\tile\in\Class_i'$:
	\begin{enumerate}
		\item add new vertices $x_{\tile}^{\text{in}}$ and $x_{\tile}^{\text{out}},$
		\item for every vertex $u$ in the centre of $\tile$ introduce the edges $\Brace{u,x_{\tile}^{\text{in}}}$ and $\Brace{x_{\tile}^{\text{out}},u},$ and then
		\item delete all internal vertices of $\tile.$
	\end{enumerate}
	Additionally, we define the sets 
	\begin{align*}
		X_{\rom{1}}^{\text{out}}&\coloneqq \CondSet{x_{\tile}^{\text{out}}}{\tile\in\Class_i'}\text{, and}\\
		X_{\rom{1}}^{\text{in}}&\coloneqq \CondSet{x_{\tile}^{\text{in}}}{\tile\in\Class_i'}. \qedhere
	\end{align*}
\end{definition}

We establish that every outcome of \cref{lem:directed_long_jumps} yields a sufficiently large cross-row grid.

\begin{lemma}
	\label{claim:long_jumps_imply_minor}
	Every outcome of \cref{lem:directed_long_jumps} yields $\CrossRowGrid{\cliqueToCrossRowGridFunctionInverse{t}}$ as a butterfly minor.
\end{lemma}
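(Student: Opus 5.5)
We treat the three outcomes of \cref{lem:directed_long_jumps} separately, in the order in which the paper's proof sketch lists them.

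\emph{Outcome \cref{directed_long_jumps:i}.} Here $D$ contains $\K{t}$ as a butterfly minor. The target order $\cliqueToCrossRowGridFunctionInverse{t}$ is chosen exactly so that $\cliqueToCrossRowGridFunction{\cliqueToCrossRowGridFunctionInverse{t}} \leq t$. Hence \cref{lem:clique_contains_cross-row-wall} shows that $\K{t}$, and therefore $D$ by transitivity of the butterfly minor relation, contains $\CrossRowGrid{\cliqueToCrossRowGridFunctionInverse{t}}$.

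\emph{Outcome \cref{directed_long_jumps:ii}.} We obtain tiles $\tile_1,\dots,\tile_h$ with $h \geq \Fkt{f_P}{t}^{1/4}$, all lying in one strip $S$ whose height is the tile height. Consecutive tiles are separated by a slice of tile width. The jump $\JumpsLeaving{\tile_i}$ starts in the centre of $\tile_i$ and ends in the separating slice between $\tile_{i-1}$ and $\tile_i$; for $i=1$ it ends in the free component.

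For each $i$ we take the slice $S_i$ spanned by $\tile_i$ together with its left neighbouring slice. These slices are pairwise disjoint, since each tile is used only once and the separating slices lie strictly between consecutive tiles. To be safe we may use only every other index, which loses a factor $2$. Inside $S_i$ we build a cross in the sense of \cref{def:cross_slice} as follows.
\begin{enumerate}
	\item The first path $J_1$ is routed from the left boundary column $Q$ of $S_i$, along wall paths inside $\tile_i$, to the start of the jump in the centre brick. It then follows the jump to its end-vertex in the left slice, and from there runs along the wall to the right boundary column of $S_i$.
	\item The second path $J_2$ goes the opposite way. It starts on the right boundary column at a row near the top of the strip and travels through wall rows of $S_i$ to the left boundary column, ending in a different row.
\end{enumerate}
The point of the construction is that the jump lets $J_1$ traverse the slice in the direction opposite to the one the wall rows permit, so that $J_1$ and $J_2$ together form a cross.

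The main obstacle is the routing in this step, for three reasons. First, the wall's in-paths and out-paths alternate in direction, so we must pick rows whose orientation matches each path. Second, $J_1$ and $J_2$ must be disjoint, and both must be disjoint from the other jumps; the other jumps are disjoint paths, and their endpoints lie in other slices. Third, the row condition $\max\Set{a,a'}-\min\Set{b,b'}>1$ must hold. Because the tile width $\Fkt{f_w}{t}$ is large, each tile contains many rows and columns, and we plan to meet all three requirements by using disjoint bands of rows within the tile.

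With the crosses in place we apply \cref{lem:many_cross_slices_yield_minor}. All crosses lie in the strip $S$, whose height is only about $2\Fkt{f_w}{t}+2$. The wall has order $3k$ with $k \geq \Fkt{f_W}{t}$, so many rows lie outside $S$, in particular at least $\cliqueToCrossRowGridFunctionInverse{t}$ of them. We have $\Fkt{f_P}{t}^{1/4}/2 \geq \cliqueToCrossRowGridFunctionInverse{t}$ for the stated bounds (with $f_P = 2^7t^8$, the left side is about $t^2$). The lemma therefore yields $\CrossRowGrid{\cliqueToCrossRowGridFunctionInverse{t}}$.

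\emph{Outcome \cref{directed_long_jumps:iii}.} This outcome is identical to \cref{directed_long_jumps:ii} after switching the parametrisation of $W$. Switching the parametrisation mirrors the wall and exchanges in-paths with out-paths. Since the cross-row grid is built by \cref{lem:many_cross_slices_yield_minor} from an arbitrary row, the same argument applies verbatim. The second, arriving-jump version of \cref{lem:directed_long_jumps} is handled symmetrically, with the roles of start and end reversed in the construction of $J_1$.
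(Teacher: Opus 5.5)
Your skeleton matches the paper's proof. Outcome (i) goes through \cref{lem:clique_contains_cross-row-wall}. For outcomes (ii) and (iii), each jump $\JumpsLeaving{\tile_i}$ is turned into a cross inside a private slice around $\tile_i$ and the jump's end-vertex, the crosses are kept in $S$, and the rows outside $S$ feed \cref{lem:many_cross_slices_yield_minor}. The paper gets the per-jump cross by citing \cref{lem:local_jump_yields_cross_slice}. You build it by hand instead, and that step is a genuine gap.

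You only announce a routing plan, and the plan rests on a wrong picture. Every row of a cylindrical wall contains an out-path and an in-path, so wall paths can cross a slice in either horizontal direction. What the wall forbids inside a strip is moving upwards: all cycles $C_i$ are oriented alike, so every wall path inside $S$ is monotone in the vertical direction. The jump helps because it leaves the centre brick of $\tile_i$ without crossing the ring of the tile, not because it reverses a horizontal direction.

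Because of this, your architecture fails in general. You place the leftward jump, from $c$ in the centre of $\tile_i$ to $e$ in the slice to its left, inside the left-to-right path $J_1=\gamma_1\cup\JumpsLeaving{\tile_i}\cup\gamma_2$. Here $\gamma_1$ is a wall path from the left column to $c$, and $\gamma_2$ is a wall path from $e$ to the right column.

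Suppose $c$ and $e$ lie on the same horizontal path $P$; outcome (ii) does not exclude this.
\begin{enumerate}
\item $\gamma_1$ lies weakly above $P$ and $\gamma_2$ weakly below it.
\item So a descending $J_2$ can only interleave by starting above the end of $\gamma_2$ and ending below the start of $\gamma_1$.
\item By planarity of the wall, $J_2$ cannot stay on one side of $P$.
\item Its part above $P$ must first reach $P$ to the right of $c$, or it meets $\gamma_1$.
\item Its part below $P$ must leave $P$ to the left of $e$, or it meets $\gamma_2$.
\item By monotonicity, $J_2$ then runs along $P$ through $c$, which is a contradiction.
\end{enumerate}

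The repair is to put the jump into the path that travels in the jump's direction, namely $J_2$. Let $J_2$ enter $\tile_i$ from the upper right, descend to $c$, jump to $e$, and continue down and left to the left column. Let $J_1$ come from the upper left, pass above $e$, descend in a column strictly between $e$ and the centre brick, and exit to the right below $c$. This works for every position of $e$, allowing one row beyond the tiles. It is the construction behind \cref{lem:local_jump_yields_cross_slice}, so citing that lemma closes the gap. The rest of your argument then goes through as in the paper.
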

\begin{proof}
	If the outcome is \cref{directed_long_jumps:i}, then, by \cref{lem:clique_contains_cross-row-wall}, we can find a butterfly model of $\CrossRowGrid{\cliqueToCrossRowGridFunctionInverse{t}}$ within the $\K{t}$ butterfly minor model.
	
	Hence, we may assume the outcome is \cref{directed_long_jumps:ii} or \cref{directed_long_jumps:iii}.
	Since these two cases are symmetric it suffices to only consider outcome \cref{directed_long_jumps:ii}.
	In that case, we find a family $\tiling'' \subseteq \tiling'$ of size $h \geq \Fkt{f_P}{t}^{\frac{1}{4}} \geq \frac{1}{3}t$ contained in a single \hyperref[def:strip]{strip} $S \subseteq W$ which is of the same height as the tiles in $\tiling''.$
	We can number the tiles $\tiling'' = \Set{\tile_1,\dots,\tile_h}$ such that $S-\tile_i$ has one component containing exactly the tiles $\tile_1,\dots,\tile_{i-1}$ for each $i\in [h].$
	For each $i \in [h]$ let $S_i\subseteq W$ be the slice of width equal to the width of the tiles in $\tiling''$ whose intersection with $S$ is exactly the tile $\tile_i.$
	Additionally, for each $i \in [h]$ there is a \hyperref[def:slice]{slice} $H_i$ of $W$ containing $S_i$ and both the start- and the end-vertex of the jump $J_i \coloneqq \JumpsLeaving{\tile_i}$ such that $H_i$ and $H_j$ are disjoint if $i\neq j.$
	By \cref{lem:local_jump_yields_cross_slice}, $J_i$ yields a cross slice $H'_i$ in $H_i$ such that the cross lies within the strip $S.$
	As there are more than $t$ rows that do not lie in $S$ and we have more than $\cliqueToCrossRowGridFunctionInverse{t}$ such slices, by \cref{lem:many_cross_slices_yield_minor} we obtain a $\CrossRowGrid{\cliqueToCrossRowGridFunctionInverse{t}}$ as a butterfly minor.
\end{proof}

Let $L$ and $P$ be directed paths.
We say that $P$ is a \define{long jump of $L$} if $P$ is a $w$-long jump over $W$ and $P\subseteq L.$
Additionally, we say that $P$ is a \define{jump of $L$} if $P$ is a directed $W$-path.

The following \namecref{lem:longjumps1} deals with long jumps between one colour class and the remaining three colour classes of a tiling colouring.

\begin{lemma}\label{lem:longjumps1}
	Let $t,k,k',w\in\N$ be positive integers with $k \geq k' \geq 2 \Fkt{f_W}{\cliqueToCrossRowGridFunction{t}} + 2^{16} \Fkt{f_P}{\cliqueToCrossRowGridFunction{t}} + 2 + 2w,$ $w\geq 2\Fkt{f_w}{\cliqueToCrossRowGridFunction{t}}+2^7\Fkt{f_P}{\cliqueToCrossRowGridFunction{t}},$ and $\xi,\xi'\in [w+1].$
	Let $D$ be a digraph containing a cylindrical wall $W$ of order $3k$ with its \hyperref[def:triadic_partition]{triadic partition} $\Triadic = \Brace{W,k,W_1,W_2,W_3,W^1,W^2,W^3},$ and a tiling \hyperref[def:tiling]{$\tiling=\tiling_{W,k,w,\xi,\xi'}$}.
	Let $\Set{\Class_1,\dots,\Class_4}$ be a four-colouring of $\tiling,$ $i\in [4]$ and $W'\subseteq W$ be a \hyperref[def:slice]{slice} of width $k'$ of $W_2.$
	
	Now let $\tiling'$ be the family of all tiles of $\tiling$ that are completely contained in $W'$ and let $\widetilde{W}$ be the smallest slice of $W$ that contains all tiles from $\tiling'.$
	
	Consider the \hyperref[def:auxiliary_digraph_I]{auxiliary digraph} $\Fkt{D_i^{\rom{1}}}{\widetilde{W}}$ with $X_{\rom{1}}^{\text{out}}$ and $X_{\rom{1}}^{\text{in}}.$
	Additionally, we construct the set $Y_{\rom{1}}$ as follows.
	Let $Q$ and $Q'$ be the two cycles of $\per{W_2}.$
	For every $j \in \left[\frac{3k}{4} \right],$ $Y_{\rom{1}}$ contains exactly one arbitrarily chosen vertex of $Q\cap \outPath_{4j},$ $Q\cap \inPath_{4j+2},$ $Q'\cap \outPath_{4j},$ and $Q'\cap \inPath_{4j+2}$ each.

	If there exists a family $\mathcal{L}$ of pairwise disjoint directed paths with $\Abs{\mathcal{L}}= 2^7\Fkt{f_P}{\cliqueToCrossRowGridFunction{t}}$ such that either
	\begin{itemize}
		\item $\mathcal{L}$ is a family of directed $X_{\rom{1}}^{\text{out}}$-$Y_{\rom{1}}$-paths, or
		\item $\mathcal{L}$ is a family of directed $Y_{\rom{1}}$-$X_{\rom{1}}^{\text{in}}$-paths,
	\end{itemize}
	then $D$ contains $\CrossRowGrid{t}$ as a butterfly minor.
\end{lemma}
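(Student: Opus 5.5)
We treat the case where $\mathcal{L}$ is a family of $X_{\rom{1}}^{\text{out}}$-$Y_{\rom{1}}$-paths; the other case is symmetric and uses the bracketed (blue) version of \cref{lem:directed_long_jumps} with $\JumpsArriving{}$ in place of $\JumpsLeaving{}$. The aim is to convert $\mathcal{L}$ into a family $\Jumps$ and a subfamily of tiles meeting the hypotheses of \cref{lem:directed_long_jumps} with parameter $\cliqueToCrossRowGridFunction{t}$. Then \cref{claim:long_jumps_imply_minor} yields $\CrossRowGrid{t}$ as a butterfly minor, since $\cliqueToCrossRowGridFunctionInverse{\cliqueToCrossRowGridFunction{t}}=t$.

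\emph{Step 1: from paths in $\Fkt{D_i^{\rom{1}}}{\widetilde{W}}$ to jumps in $D$.} Each $L\in\mathcal{L}$ starts at some $x_{\tile}^{\text{out}}$. Its second vertex lies on the centre of $\tile$, and it ends at $Y_{\rom{1}}\subseteq\per{W_2}$. Since the internal vertices of the tiles in $\Class_i'$ are deleted, $L$ leaves $\tile$ only through its perimeter or through a path of $D$ avoiding $W$. Because $\per{W_2}$ lies at distance at least $w$ from every tile of $\widetilde{W}$, we walk along $L-x_{\tile}^{\text{out}}$ and take the first vertex after the centre at which $L$ meets $W$ outside the tile. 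If $L$ meets $\V{W}$ before reaching distance two from $\tile$, we reroute through the tile perimeter. Concretely, we take the first $\V{W}$-path subpath $J_L$ of $L$ that starts in the centre, or on the perimeter of $\tile$ after the tile interior has been used to connect from the centre. Its endpoint must lie outside the $3\times3$ block of tiles around $\tile$, because otherwise $L$ would continue inside $W$ using only tile-neighbour vertices. The delicate part is that $L$ may legitimately walk along $W$ itself. In that case, since $L$ avoids the interiors of the other tiles in $\Class_i'$ and must reach distance $w$, we instead take the last departure of $L$ from $\V{\tile}\cup$ neighbours. We then reconnect the centre to that departure vertex inside $\tile$, which is possible because tiles have width $w\geq 2\Fkt{f_w}{\cdot}$ and, after discarding a constant fraction of $\mathcal{L}$, distinct paths use distinct tiles.

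\emph{Step 2: thinning.} Each tile of $\Class_i'$ hosts at most one start vertex, since the vertices $x_{\tile}^{\text{out}}$ are distinct. Tiles of one colour class already satisfy $\max\{|p-p'|,|q-q'|\}\geq2$. We keep a subfamily in which the end vertices of the jumps have pairwise $W$-distance at least four, and in which each jump intersects $\V{\tiling'}$ only in its end vertex. Both are achieved by a greedy selection: every end vertex conflicts with boundedly many tiles and other end vertices, which loses a constant factor (at most $2^7$). This leaves $\Fkt{f_P}{\cliqueToCrossRowGridFunction{t}}$ jumps. The hypothesis $k'\geq 2\Fkt{f_W}{\cdot}+\dots$ guarantees that the wall order requirement $k\geq\Fkt{f_W}{\cdot}$ of \cref{lem:directed_long_jumps} holds for a subwall containing $\widetilde{W}$.

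\emph{Main obstacle.} The hard step is Step 1: extracting from each $L$ a genuine $\V{W}$-path that starts in the centre of its tile and that we may treat as long. The difficulty is that $L$ may run along wall edges and pass through perimeters of other tiles. I would handle this by shortcutting. Whenever $L$ enters a tile $\tile'\in\Class_i'$ other than its own, we cut $L$ there and, using the rich internal linkage of $\tile'$, reattribute the path to start from $\tile'$. When $L$ instead travels inside $W$ for a long stretch, its first proper jump off $W$ yields the desired jump, after we reroute through $W$ to a centre. Disjointness is maintained by charging each rerouting to a private tile, and these charges consume the $2^7$ slack in $|\mathcal{L}|$.
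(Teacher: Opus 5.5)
\textbf{There is a genuine gap in your Step~1.} It claims that every $L\in\mathcal{L}$, after local surgery near its own tile, contains a $\V{W}$-path that starts on the centre of that tile and is a $w$-long jump. This is false in general. In $\Fkt{D_i^{\rom{1}}}{\widetilde{W}}$ only the internal vertices of the tiles in $\Class_i'$ are deleted. So $L$ may leave the centre of $\tile$ by one short off-wall hop, landing on the perimeter of $\tile$ or in a neighbouring tile of another colour, and then run along edges of $W$ all the way to $Y_{\rom{1}}$. Such an $L$ contains no long jump over $W$ at all.

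Your justification (``otherwise $L$ would continue inside $W$ using only tile-neighbour vertices'') yields no contradiction, and the fallbacks do not repair it:
\begin{itemize}
\item The last departure of $L$ from $\V{\tile}$ and its neighbours may be along a wall edge rather than off the wall. It may also lie in a neighbouring tile, so it cannot be reconnected ``inside $\tile$''.
\item Any rerouting ``through $W$ to a centre'', or through the interior of some $\tile'\in\Class_i'$, produces a path that is no longer a $\V{W}$-path. This violates hypothesis~1 of \cref{lem:directed_long_jumps}.
\item Reroutes through $W$ can hit other paths of $\mathcal{L}$, which may use wall vertices freely. Charging reroutes to private tiles does not prevent this.
\item The mutual $W$-distance-four condition cannot be obtained by greedy thinning with constant loss. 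Since $W$ is subdivided, arbitrarily many disjoint jumps can end on a single subdivided segment.
\end{itemize}

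\textbf{The missing idea is to change the wall rather than the paths.} The paper proceeds as follows.
\begin{itemize}
\item For each $L$ it marks the first tile outside $\Class_i'$ at which $L$ meets $\widetilde{W}$, giving a family $\mathcal{F}$ with $\Abs{\mathcal{F}}\le\Abs{\mathcal{L}}$.
\item It reroutes the remainder of $L$ through $W$ to $Y_{\rom{1}}$ and accepts that the result is only a half-integral linkage. \cref{thm:half_integral} then turns this back into a genuine linkage of half the size.
\item It passes to $\InducedSubgraph{\widetilde{W}}{\tiling,i}$ with its tier~II tiling. Each cut-out $\Class_i$-tile thereby becomes the centre of a new tile, and its interior is off the wall.
\item It deletes all vertical cycles and row pairs that meet the tiles of $\mathcal{F}$ used by the paths. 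There are few of these, after shortcutting each path so that it meets each such tile in at most $2^7\Fkt{f_P}{\cliqueToCrossRowGridFunction{t}}+1$ of them.
\item This gives a wall $W'''$, the middle slice of some $W^{*}$, over which the short hops become genuine jumps. These are rerouted to the evenly spaced set $Y^{*}\subseteq\per{W^{*}}$, which is what guarantees both that they are long and that their endpoints have mutual $W^{*}$-distance four.
\item A second application of \cref{thm:half_integral} yields the family required by \cref{lem:directed_long_jumps}.
\end{itemize}
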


\begin{proof}
	In the statement we insert $\cliqueToCrossRowGridFunction{t}$ as the arguments for all functions.
	This means, due to \cref{claim:long_jumps_imply_minor}, any application of \cref{lem:directed_long_jumps} yields a $\CrossRowGrid{t}$ as a butterfly minor.
	Thus, we construct a cylindrical wall $W'''\subseteq W$ of sufficient size, together with a family of $\Fkt{f_P}{\cliqueToCrossRowGridFunction{t}}$ directed $W'''$-paths that meet the requirements of \cref{lem:directed_long_jumps}.
	
	Without loss of generality, let us assume $\mathcal{L}$ is a family of directed $X_{\rom{1}}^{\text{out}}$-$Y_{\rom{1}}$-paths.
	The other case, $Y_{\rom{1}}$-$X_{\rom{1}}^{\text{out}}$-paths, follows by similar arguments.
	
	\textbf{Short overview of the proof:}
	Towards our goal, we first show that we can use $\mathcal{L}$ to construct a half-integral $X_{\rom{1}}^{\text{out}}$-$Y_{\rom{1}}$-linkage $\mathcal{L}_1$ such that
	\begin{enumerate}
		\item $\Abs{\mathcal{L}_1}=2^7\Fkt{f_P}{\cliqueToCrossRowGridFunction{t}},$
		\item there exists a family $\mathcal{F}\subseteq\tiling'$ with $\Abs{\mathcal{F}}\leq 2^7\Fkt{f_P}{\cliqueToCrossRowGridFunction{t}},$ and
		\item for every $L\in\mathcal{L}_1,$ every end-vertex $u$ of a jump of $L$ with $u\in\V{\widetilde{W}}$ belongs to a tile from $\Class_i'\cup\mathcal{F}.$
	\end{enumerate}
	Second, we use \cref{thm:half_integral} to obtain a family $\mathcal{L}_2$ of pairwise disjoint directed $X_{\rom{1}}^{\text{out}}$-$Y_{\rom{1}}$-paths of size $2^6\Fkt{f_P}{\cliqueToCrossRowGridFunction{t}}$ from $\mathcal{L}_1.$
	Third, we remove the cycles and paths of $\widetilde{W}$ that meet tiles from $\mathcal{F}$ and obtain a new slice $W'''$ of some cylindrical wall.
	For this slice, we construct a \hyperref[def:tiling]{tiling} and a \hyperref[def:tilingII]{tier II tiling} as well as a half-integral linkage $\mathcal{L}_3$ of size $2^4\Fkt{f_P}{\cliqueToCrossRowGridFunction{t}}$ from $\mathcal{L}_2.$
	The linkage $\mathcal{L}_3$ connects the \hyperref[def:tile]{centres} of some tiles in the tier II tiling to vertices of $\widetilde{W}'$ such that their end-vertices and start-vertices are mutually far enough apart and every path in $\mathcal{L}_3$ is internally disjoint from the new wall $W'''.$
	Another application of \cref{thm:half_integral} then yields the family $\mathcal{L}_4$ of long jumps necessary for an application of \cref{lem:directed_long_jumps}.
	
	\textbf{The construction of $\mathcal{L}_1$ and $\mathcal{F}.$}
	We do construct $\mathcal{L}_1$ and $\mathcal{F}$ iteratively starting with $\mathcal{L}'\coloneqq \mathcal{L},$ $\mathcal{L}_1\coloneqq \emptyset,$ and $\mathcal{F}\coloneqq\emptyset.$
	As long as $\mathcal{L}'$ is non-empty, perform the following actions.
	
	Select some path $L\in\mathcal{L}'.$
	In case $L$ is internally disjoint from $\widetilde{W},$ add $L$ to $\mathcal{L}_1$ and remove it from $\mathcal{L}'.$
	Otherwise, let $v_L$ be the first vertex of $L$ that belongs to $\widetilde{W},$ but not to a tile from $\Class_i'.$
	\renewcommand{\labelenumi}{\textbf{\theenumi}}
	\renewcommand{\theenumi}{(\roman{enumi})}
	\begin{enumerate}
		\item \label{L_1:i} If $v_L$ does not belong to any tile from $\mathcal{F},$ let $\tile \in\tiling\setminus\Class_i$ be the tile that contains $v_L$ and add $\tile$ to $\mathcal{F}.$
		Let $R$ be a shortest directed path from $v_L$ to $Y_{\rom{1}}$ in $W$ such that $R$ avoids all vertices of $W$ that are contained in two different paths of $\mathcal{L}_1$ and that is internally disjoint from $Lv_L.$
		Now add $Lv_LR$ to $\mathcal{L}_1$ and remove $L$ from $\mathcal{L}'.$
		Note that such a path $R$ must exist because the paths in $\mathcal{L}$ are pairwise disjoint, $\tile$ was not used for such a re-routing before, and $w$ and $k'$ are chosen sufficiently large in proportion to $2^7\Fkt{f_P}{\cliqueToCrossRowGridFunction{t}}.$
		Also note that the path $R$ is exactly the subpath that might cause $\mathcal{L}_1$ to be half-integral.
		However, because the paths in $\mathcal{L}'$ are pairwise disjoint, we can be sure that $R$ never meets a vertex contained in two distinct paths of $\mathcal{L}_1.$
		
		\item \label{L_1:ii} If $v_L$ belongs to a tile from $\mathcal{F},$ follow along $v_LL$ until we encounter a vertex $u_L$ for which one of the following is true:
			\begin{enumerate}
				\item \label{u_L:a} $u_L$ belongs to a tile $\tile$ from $\tiling\setminus\Brace{\Class_i\cup\mathcal{F}},$ or
				\item \label{u_L:b} every internal vertex of $u_LL$ belongs to $W-\widetilde{W}$ or to some tile from $\Class_i\cup\mathcal{F}.$ 
			\end{enumerate}
		If \cref{u_L:a}, add $\tile$ to $\mathcal{F}$ and then repeat the instructions from \cref{L_1:i} but replace $v_L$ with $u_L.$
		Otherwise, \cref{u_L:b} holds and we remove $L$ from $\mathcal{L}'$ and add it to $\mathcal{L}_1.$
	\end{enumerate}
	During this construction, for every $L\in\mathcal{L},$ we added at most one tile to $\mathcal{F}$ and thus $\Abs{\mathcal{F}}\leq\Abs{\mathcal{L}}.$
	Note that, by construction, $\mathcal{L}_1$ is indeed a half-integral linkage from $X_{\rom{1}}^{\text{out}}$ to $Y_{\rom{1}}.$
	Moreover, we may assume that every $L$ meets each tile in $\mathcal{F}$ in at most $2^7\Fkt{f_P}{\cliqueToCrossRowGridFunction{t}}+1$ horizontal path pairs and vertical cycles, because otherwise we can replace it by a shorter path through $W$ itself.
	
	\textbf{Obtaining $\mathcal{L}_2.$}
	We apply \cref{thm:half_integral} to obtain a family $\mathcal{L}_2$ of pairwise disjoint directed $X_{\rom{1}}^{\text{out}}$-$Y_{\rom{1}}$-paths with $\V{\mathcal{L}_2}\subseteq\V{\mathcal{L}_1}$ and $\Abs{\mathcal{L}_2}=2^6\Fkt{f_P}{\cliqueToCrossRowGridFunction{t}}.$
	
	\textbf{Constructing $\mathcal{L}_3.$}
	Let us consider $W''\coloneqq \InducedSubgraph{\widetilde{W}}{\tiling,i}$ together with the tiling $\tiling''\coloneqq\TierIITiling{\tiling,i,w}{\widetilde{W}}$ and a four-colouring $\Set{\widetilde{\Class}_1,\dots,\widetilde{\Class}_4}.$
	Note that the width of $\widetilde{W}$ is at most $2w$ smaller than the width of $W'.$
	Thus, by choice of $k',$ we obtain that $W''$ is a slice of width $k''\geq \Fkt{f_W}{\cliqueToCrossRowGridFunction{t}}+2^7\Fkt{f_P}{\cliqueToCrossRowGridFunction{t}}\Brace{2w+1}+1$ of some cylindrical wall of order $3k''$ that is completely contained in $W.$
	For each $L\in\mathcal{L}_2$ let $\tile^1_L\in \Class_i$ such that $\Start{L}$ belongs to $\tile^1_L.$
	Let $K^1_L\in \tiling''$ be the tile whose centre is the perimeter of $\tile^1_L.$
	Choose any vertex $\Start{L}'$ of degree three in $W''$ that is not contained in any path of $\mathcal{L}_2,$ and let $R_L$ be a directed path from $\Start{L}'$ to $\Start{L}$ within $\tile^1_L.$
	Let $\mathcal{L}_3$ be the, possibly again half-integral, family of directed paths resulting from concatenating the new paths and the corresponding original path from $\mathcal{L}_2.$
	
	\textbf{Finding $\mathcal{L}_4$ and $W'''.$}
	There exists $j\in[4]$ such that at least $2^4\Fkt{f_P}{\cliqueToCrossRowGridFunction{t}}$ of the paths from $\mathcal{L}_3$ start at the centre of a tile from $\widetilde{\Class}_j.$
	Let $\mathcal{L}_3' \subseteq\mathcal{L}_3$ be a family of exactly $2^4\Fkt{f_P}{\cliqueToCrossRowGridFunction{t}}$ such paths.
	Next, let us consider the family $\mathcal{F}.$
	Let $W'''$ be the subgraph of $W''$ induced by all vertical cycles and horizontal path pairs in $W''$ that do not contain a vertex of some tile in $\mathcal{F}$ that belongs to a path in $\mathcal{L}_3'.$
	Since $\Abs{\mathcal{F}}\leq 2^7\Fkt{f_P}{\cliqueToCrossRowGridFunction{t}}$ and each tile in $\mathcal{F}$ meets a path in $\mathcal{L}_3'$ in at most $2^7\Fkt{f_P}{\cliqueToCrossRowGridFunction{t}}+1$ such cycles and pairs of horizontal paths, it follows that $W'''$ is a slice of width $k'''\geq\Fkt{f_W}{\cliqueToCrossRowGridFunction{t}}+2$ of some cylindrical wall $W^{*}\subseteq W$ of order $3k'''.$
	Moreover, $W^{*}$ can be partitioned into three \hyperref[def:slice]{slices} of width $k'''$ as in its \hyperref[def:triadic_partition]{triadic partition}, such that $W'''$ is the slice in the middle.
	Let us rename the paths and cycles of $W^{*}$ such that $Q^{*}_1,\dots,Q^{*}_{3k'''}$ are the vertical paths of $W^{*},$ $\inPathStar_1,\dots,\inPathStar_{3k'''}$ its in-paths and $\outPathStar_1,\dots,\outPathStar_{3k'''}$ its out-paths.
	We construct the set $Y^{*}$ as follows:
	For every $j\in\left[\frac{3k'''}{4} \right],$ $Y^{*}$ contains exactly one vertex of $Q_1^{*} \cap P^{*1}_{4j},$ $Q_1^{*} \cap P^{*2}_{4j+2},$ $Q_{3k'''}^{*} \cap P^{*1}_{4j},$ and $Q_{3k'''}^{*} \cap P^{*2}_{4j+2}$ each.
	
	Similarly to $\mathcal{L}_1,$ we now construct $\mathcal{L}''_3$ iteratively from $\mathcal{L}'_3.$
	We start with $\mathcal{L}''_3$ being empty.
	Let $L\in\mathcal{L}_3'$ be any path and $t_L$ be the first vertex after $\Start{L}$ that $L$ shares with either $W'''$ or $W^{*}-W'''.$
	If $t_L\in\V{W'''},$ add $Lt_L$ to $\mathcal{L}_3''.$
	Otherwise, let $b_L$ be the last vertex of $L$ in $W^{*}-W'''.$
	Then, we can find a path $R_L$ in $W$ from $b_L$ to a vertex $t^{*}_L$ of $Y^{*}$ such that $t^{*}_L$ is of $W^{*}$-distance at least four to every endpoint of every path already in $\mathcal{L}_3'',$ $R_L$ is internally disjoint from $L,$ and $R_L$ does not contain a vertex that is contained in two distinct paths from $\mathcal{L}_3''.$
	Add $LR_L$ to $\mathcal{L}_3''.$
	Finally, $\mathcal{L}_3''$ is a half-integral linkage from the set $S^{*} \coloneqq \Start{\mathcal{L}_3'}$ to $Y^{*}$ of size $2^4\Fkt{f_P}{\cliqueToCrossRowGridFunction{t}},$ and thus by \cref{thm:half_integral} we can find a family $\mathcal{L}_4$ of pairwise disjoint directed paths from $S^{*}$ to $Y^{*}$ with $\V{\mathcal{L}_4}\subseteq\V{\mathcal{L}_3''}$ that is of size $2^3\Fkt{f_P}{\cliqueToCrossRowGridFunction{t}}.$
	It follows that all paths in $\mathcal{L}_4$ are internally disjoint from $W'''.$
	
	Let us consider the tiles of $\widetilde{\Class}_i$ whose centres contain a vertex of $S^{*}.$
	Since $W'''$ might be a proper subgraph of $W'',$ $\tiling''$ is not necessarily a tiling of $W'''.$
	Each tile $\tile \in \tiling'',$ however, contains a tile $\tile'$ of width $\Fkt{f_w}{\cliqueToCrossRowGridFunction{t}}$ with the same \hyperref[def:tile]{centre}.
	Since $\tile$ is surrounded by at most $8$ tiles from $\mathcal{F}$ in $W',$ we may find, among the $2^3\Fkt{f_P}{\cliqueToCrossRowGridFunction{t}}$ many such tiles, a family $\Jumps$ of $\Fkt{f_P}{\cliqueToCrossRowGridFunction{t}}$ tiles that are pairwise disjoint.
	Thus, because they all are constructed from the family $\widetilde{\Class}_i,$ they meet the distance requirements of the tiles in \cref{lem:directed_long_jumps}.
	Hence, we can apply \cref{lem:directed_long_jumps} and by \cref{claim:long_jumps_imply_minor} we obtain the desired butterfly minor $\CrossRowGrid{t}.$
\end{proof}

The above lemma allows us to argue that enough long jumps that all start, respectively end, in the centre of tiles from a single colour class but end, respectively start, in tiles from the remaining three classes yield a cross-row grid as a butterfly minor.
By utilising a second auxiliary digraph we can make use of \cref{thm:X_paths} to also prove this for long jumps between tiles of the same colour.

\begin{definition}[Auxiliary Digraph Type \rom{2}]
	\label{def:auxiliarydigraphII}
	Let $t,k,k',w\in\N$ be positive integers such that $k\geq k'\geq2\Fkt{f_W}{t},$ $w\geq 2\Fkt{f_w}{t},$ and $\xi,\xi'\in\left[w+1\right].$
	Let $D$ be a digraph containing a cylindrical wall $W$ of order $3k$ with its \hyperref[def:triadic_partition]{triadic partition} $\Triadic=\Brace{W,k,W_1,W_2,W_3,W^1,W^2,W^3},$ and a \hyperref[def:tiling]{tiling} $\tiling=\tiling_{W,k,w,\xi,\xi'}.$
	Let $\Set{\Class_1,\dots,\Class_4}$ be a four-colouring of $\tiling,$ $i\in[4]$ and $W'\subseteq W$ be a \hyperref[def:slice]{slice} of width $k'$ of $W_2$ such that no \hyperref[def:tile]{tile} of $\Class_i$ contains a vertex of the perimeter of $W'.$
	Then, \MarginOnly{$\Fkt{D^{\rom{2}}_i}{W'}$}$\Fkt{D^{\rom{2}}_i}{W'}$ is the digraph obtained from $D$ by performing the following construction steps.
	
	For every $\tile \in \Class_i,$ such that $\tile$ contains a vertex of $W',$ we do the following:
	\begin{enumerate}
		\item add a new vertex $x_{\tile},$ and
		\item for every vertex $v$ that belongs to the interior or the centre of $\tile,$ introduce the edges $\Brace{x_{\tile},v}$ and $\Brace{v,x_{\tile}}.$
	\end{enumerate}
	Once this is done, delete all vertices of $W'$ that do not belong to tiles of $\Class_i$ that are contained in $W'.$
	Let $X_{\rom{2}}^i$ be the collection of all newly introduced vertices $x_{\tile}.$
\end{definition}

\begin{lemma}
	\label{lem:longjumpsphase2}
	Let $t,k,k',w\in\N$ be positive integers, and $\xi,$ $\xi'\in\left[w+1\right]$ where $w \geq 2 \Fkt{f_w}{\cliqueToCrossRowGridFunction{t}}.$
	Let $D$ be a digraph containing a cylindrical wall $W$ of order $3k$ with vertical paths $Q_1,$ $\dots,$ $Q_{3k},$ in-paths $\inPath_1,$ $\dots,$ $\inPath_{3k}$ and out-paths $\outPath_1,$ $\dots,$ $\outPath_{3k},$ where $k \geq k'\geq 4 \Fkt{f_W}{\cliqueToCrossRowGridFunction{t}}^2.$
	Also, let $\Triadic=(W_0,$ $k,$ $W_1,$ $W_2,$ $W_3,$ $W^1,$ $W^2,$ $W^3)$ be its \hyperref[def:triadic_partition]{triadic partition}, $W^{*}\subseteq W_2$ be a \hyperref[def:slice]{slice} of width $k',$ $\tiling=\tiling_{W,k,w,\xi,\xi'}$ be a \hyperref[def:tiling]{tiling}, $\Set{\Class_1,\dots,\Class_4}$ a four-colouring and $i\in[4]$ a fixed colour.
	
	Then, $D$ contains $\CrossRowGrid{t}$ as a butterfly minor, or there exists a set $\MarkedTiles{2}_{i,\xi,\xi'}\subseteq\tiling$ with $\Abs{\MarkedTiles{2}_{i,\xi,\xi'}}\leq 8\Fkt{f_P}{\cliqueToCrossRowGridFunction{t}}$ and a set $Z^2_{i,\xi,\xi'}\subseteq\V{D-W}$ with $\Abs{Z^2_{i,\xi,\xi'}}\leq 8\Fkt{f_P}{\cliqueToCrossRowGridFunction{t}}$ such that every directed $\V{W_0}$-path in $D-Z^2_{i,\xi,\xi'}$ whose start- and end-vertex belong to different tiles of $\Class_i$ contains a vertex of some tile in $\MarkedTiles{2}_{i,\xi,\xi'}.$
\end{lemma}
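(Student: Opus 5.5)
We work in the auxiliary digraph $D' \coloneqq \Fkt{D^{\rom{2}}_i}{W^{*}}$ of \cref{def:auxiliarydigraphII} and apply \cref{thm:X_paths} with $X \coloneqq X_{\rom{2}}^i$ and $k \coloneqq 4\Fkt{f_P}{\cliqueToCrossRowGridFunction{t}}$. Outcome one gives $4\Fkt{f_P}{\cliqueToCrossRowGridFunction{t}}$ pairwise disjoint $X_{\rom{2}}^i$-paths in $D'$. Outcome two gives a hitting set $S$ with $\Abs{S}\leq 8\Fkt{f_P}{\cliqueToCrossRowGridFunction{t}}$.

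\textbf{The hitting set case.} Split $S$ into $S\cap X_{\rom{2}}^i$ and $S\setminus X_{\rom{2}}^i$. Each $x_{\tile}\in S$ corresponds to a tile $\tile\in\Class_i$, which we put into $\MarkedTiles{2}_{i,\xi,\xi'}$. Each vertex of $S$ lying in $W$ is replaced by the tile of $\tiling$ containing it, also put into $\MarkedTiles{2}_{i,\xi,\xi'}$. We set $Z^2_{i,\xi,\xi'} \coloneqq S\setminus\Brace{X_{\rom{2}}^i\cup\V{W}}$. Both sets have size at most $8\Fkt{f_P}{\cliqueToCrossRowGridFunction{t}}$.

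Now take a directed $\V{W_0}$-path $P$ in $D-Z^2_{i,\xi,\xi'}$ whose endpoints lie in distinct tiles $\tile\neq\tile'$ of $\Class_i$, and suppose $P$ avoids all tiles of $\MarkedTiles{2}_{i,\xi,\xi'}$. We shorten $P$ to a minimal subpath $P'$ that leaves the interior or centre of one $\Class_i$-tile and next arrives at a different one. Prepending $x_{\tile}$ and appending $x_{\tile'}$ gives an $X_{\rom{2}}^i$-path in $D'$. Its internal vertices avoid the deleted part of $W^{*}$ only if $P'$ does not pass through $W^{*}$ outside $\Class_i$-tiles. This is the delicate point, and it may force choosing the deleted region and the path segments slightly differently (for instance, routing the relevant segments through tiles via the wall). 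Granting it, the path avoids $S$, a contradiction. Paths entering only the perimeter or boundary of a tile are handled by noting that consecutive tiles in the tiling are separated by tiles of other colours.

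\textbf{The linkage case.} We have a linkage $\mathcal{P}$ of $4\Fkt{f_P}{\cliqueToCrossRowGridFunction{t}}$ disjoint paths, each going from some $x_{\tile}$ to some $x_{\tile'}$ with $\tile\neq\tile'$. Stripping the auxiliary vertices gives disjoint directed paths in $D$. Each starts in the interior or centre of a $\Class_i$-tile, ends in another, and is internally disjoint from $W^{*}$ outside $\Class_i$-tiles. Each $x_{\tile}$ lies on at most one path, so every tile is used at most once.

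We follow the proof of \cref{lem:longjumps1}:
\begin{enumerate}
\item Re-route each start segment inside its tile to the centre, and each end segment to a vertex of the centre of the target tile.
\item Pass to $\InducedSubgraph{W^{*}}{\tiling,i}$ with the tier II tiling of \cref{def:tilingII}, so that every $\Class_i$-tile sits in the centre of a tier II tile.
\item Select, by pigeonhole over the four colours of the tier II tiling and by discarding neighbours, $\Fkt{f_P}{\cliqueToCrossRowGridFunction{t}}$ paths. Their start tiles should be pairwise at index distance at least $2$ and their endpoints at mutual $W$-distance at least four.
\end{enumerate}
The resulting paths are long jumps satisfying the hypotheses of \cref{lem:directed_long_jumps}. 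Any half-integrality introduced by the re-routing is removed with \cref{thm:half_integral}, which halves the count; this is why $k$ carries the factor $4$. By \cref{claim:long_jumps_imply_minor}, applied with argument $\cliqueToCrossRowGridFunction{t}$, we obtain $\CrossRowGrid{t}$. The width bound $k'\geq 4\Fkt{f_W}{\cliqueToCrossRowGridFunction{t}}^2$ is what guarantees the remaining slice still has order at least $\Fkt{f_W}{\cliqueToCrossRowGridFunction{t}}$ after these deletions.

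The main obstacle is the translation in the hitting set case. We must make sure that every offending $\V{W_0}$-path in $D$ really gives an $X_{\rom{2}}^i$-path in $D'$, despite the deletion of $W^{*}$ outside $\Class_i$-tiles. Equivalently, wall vertices hit by $S$ must be absorbed into boundedly many marked tiles.
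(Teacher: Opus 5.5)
Your plan is the paper's route. You apply \cref{thm:X_paths} to $X^i_{\rom{2}}$ in the type~\rom{2} auxiliary digraph with $k=4m$, where $m\coloneqq\Fkt{f_P}{\cliqueToCrossRowGridFunction{t}}$. The hitting set gives $Z^2_{i,\xi,\xi'}$ and $\MarkedTiles{2}_{i,\xi,\xi'}$. The linkage is pushed into the tier~\rom{2} tiling, split by colour, and fed to \cref{lem:directed_long_jumps} and \cref{claim:long_jumps_imply_minor}.

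One small correction: build the auxiliary digraph on the largest slice $W'\subseteq W^{*}$ whose perimeter meets no $\Class_i$-tile, as \cref{def:auxiliarydigraphII} demands. This costs at most $2w$ columns, which $k'$ absorbs.

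The obstacle you single out in the hitting-set case is not one. The lemma only concerns $\V{W_0}$-paths, and $W_0=W$ in the triadic partition, so such a path is internally disjoint from $W$. There is nothing to shorten, and its interior never meets the deleted part of the slice. Only the attachment at the endpoints needs care, since $x_\tile$ is adjacent only to interior and centre vertices; the paper passes over this. Your decision to put every tile containing a wall vertex of $S$ into $\MarkedTiles{2}_{i,\xi,\xi'}$ is what makes a connecting segment inside $\tile$ or $\tile'$ harmless. It also gives $Z^2_{i,\xi,\xi'}\subseteq\V{D-W}$ as stated, which the paper's choice $Z\cap\V{D}$ does not literally ensure.

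The real slip is the count in the linkage case. Among $4m$ disjoint paths, some colour class of the tier~\rom{2} tiling receives at least $m$ start tiles, which is exactly what \cref{lem:directed_long_jumps} needs. So the factor $4$ pays for the colour pigeonhole, not for half-integrality. If you also re-route every start into a centre and then apply \cref{thm:half_integral}, only $m/2$ paths remain. Affording that would require $k=8m$ in \cref{thm:X_paths}, hence a hitting set of size up to $16m$, which breaks the bound $8m$ in the statement.

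The paper does not re-route. In $\InducedSubgraph{W'}{\tiling,i}$ the perimeter of each relevant $\Class_i$-tile is already the centre of a tier~\rom{2} tile. So for each $J''$ it directly takes a $\V{W'''}$-path from the perimeter of $\tile_1$ to that of $\tile_2$ and keeps the family pairwise disjoint. Discarding neighbours is also unnecessary, since distinct tiles of one colour class automatically satisfy $\max\{|p-p'|,|q-q'|\}\geq 2$.
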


\begin{proof}
	Let $W'$ be the largest \hyperref[def:slice]{slice} of $W^{*}$ such that no tile of $\Class_i$ contains a vertex of $\per{W'}.$
	Let us consider the \hyperref[def:auxiliary_digraph_I]{auxiliary digraph} $\Fkt{D^{\rom{2}}_i}{W'}$ with the set $X_{\rom{2}}^i$ of newly added vertices.
	By applying \cref{thm:X_paths} to the set $X_{\rom{2}}^i$ in $\Fkt{D^{\rom{2}}_i}{W'},$ we either find a set $Z$ of size at most $8\Fkt{f_P}{\cliqueToCrossRowGridFunction{t}}$ that hits all directed $X_{\rom{2}}^i$-paths, or there exists a family $\mathcal{J}'$ of $4\Fkt{f_P}{\cliqueToCrossRowGridFunction{t}}$ pairwise disjoint directed $X_{\rom{2}}^i$-paths in $\Fkt{D^{\rom{2}}_i}{W'}.$
	
	In case we obtain the latter, by construction of $\Fkt{D^{\rom{2}}_i}{W'},$ no path in $\mathcal{J}'$ contains a vertex of $W_2.$
	Back in the digraph $D,$ let us consider the \hyperref[def:tilingII]{tier II tiling} $\tiling''\coloneqq \TierIITiling{\tiling,i,w}{W'}$ of width $w$ of $W''\coloneqq \InducedSubgraph{W'}{\tiling,i}.$
	Note that the width of $W'$ is at most $2w$ smaller than the width of $W^{*}.$
	Thus, by choice of $k,$ $W''$ contains a cylindrical wall $W'''$ of order $\Fkt{f_W}{\cliqueToCrossRowGridFunction{t}}$ such that the perimeter of every tile $\tile \in\Class_i$ for which $x_{\tile}$ is an endpoint of some path in $\mathcal{J}'$ bounds a cell of $W'''.$
	Let $\tiling'''$ be a tiling of $W'''$ such that the perimeter of every $\tile\in\Class_i$ for which $x_{\tile}$ is a start- or end-vertex of a path in $\mathcal{J}'$ is the centre of some tile in $\tiling''.$
	We now consider a four-colouring $\Set{\Class_1',\dots,\Class_4'}$ of $\tiling'''.$
	Then, there exists $j\in[4]$ and a family $\mathcal{J}''$ of size $\Fkt{f_P}{\cliqueToCrossRowGridFunction{t}}$ such that the start-vertex of every path in $\mathcal{J}''$ belongs to a tile of $\Class_i$ whose perimeter is the centre of a tile in $\Class_j'.$
	For every $J''\in\mathcal{J}''$ do the following:
	Let $\tile_1,\tile_2\in\Class_i$ be the two tiles such that $J''$ is a directed $x_{\tile_1}$-$x_{\tile_2}$-path.
	Then, find a directed $W'''$-path $J$ that starts on the perimeter of $\tile_1$ and ends on the perimeter of $\tile_2.$
	Add the path $J$ to a family $\mathcal{J}.$
	Hence, $\mathcal{J}$ is a family of pairwise disjoint directed $W'''$-paths whose start- and end-vertices all lie on the centres of distinct tiles of $\tiling''$ and that all start at the centres of tiles from $\Class_j'.$
	Thus, we can apply \cref{lem:directed_long_jumps} and by \cref{claim:long_jumps_imply_minor} we find $\CrossRowGrid{t}$ as a butterfly minor.
	
	Therefore, we may assume that we find a set $Z$ of size at most $8\Fkt{f_P}{\cliqueToCrossRowGridFunction{t}}$ that hits all directed $X_{\rom{2}}^i$-paths.
	Let $Z^2_{i,\xi,\xi'}\coloneqq Z\cap \V{D},$ and $\MarkedTiles{2}_{i,\xi,\xi'}\coloneqq\CondSet{\tile \in\tiling}{x_{\tile} \in Z}.$
	Since $\Abs{Z}\leq 2\Fkt{f_P}{\cliqueToCrossRowGridFunction{t}},$ the demanded bounds on the sizes of the two sets are met.
	Moreover, because $Z$ meets every directed $X_{\rom{2}}^i$-path in $\Fkt{D^{\rom{2}}_i}{W'},$ every directed path with start- and end-vertex in distinct tiles of $\Class_i$ which is otherwise disjoint from $W$ contains a vertex from $Z^2_{i,\xi,\xi'}$ or meets a tile from $\MarkedTiles{2}_{i,\xi,\xi'}.$
\end{proof}

\subsection{Proof of \texorpdfstring{\cref{thm:directed_flat_wall}}{Theorem 6.3}}

As we have seen in \cref{lem:many_cross_slices_yield_minor}, finding sufficiently many cross slices in a wall yields a cross-row grid as a minor.
Thus, we introduce the following lemmata, which describe ways to find cross slices in a wall.

\begin{lemma}
	\label{lem:local_jump_yields_cross_slice}
	Let $W$ be a wall and $\mathcal{T}$ a \hyperref[def:tiling]{tiling} of $W$ and $d \in \N^{+}.$
	Let $P$ be a path with start-vertex in the centre of a \hyperref[def:tile]{tile} $\tile_{i,j,d} \in \tiling$ and end-vertex in the centre of a \hyperref[def:tile]{tile} $\tile_{i',j',d} \in \tiling$ with $\Abs{i-i'} \geq d+1,$ then the \hyperref[def:slice]{slice} between $Q_{\min\Set{i,i'}}$ and $Q_{\max\Set{i,i'}+d}$ contains a cross which lies in the \hyperref[def:strip]{strip} between $R_{\min\Set{j,j'}-1}$ and $R_{\max\Set{j,j'}+d+1}.$
\end{lemma}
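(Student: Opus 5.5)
We prove the statement by direct construction inside the wall $W$, turning $P$ into the two paths $J_1,J_2$ required by \cref{def:cross_slice}. By symmetry, assume $i<i'$ (the case $i>i'$ is the mirror image obtained by reversing the roles of start and end). Set $a\coloneqq i$ and $b\coloneqq i'+d$, and let $S$ be the slice between $Q_a$ and $Q_b$. The path $P$ is one arc of the cross. The other arc comes from the wall itself: a path travelling from $Q_b$ back to $Q_a$ along an in-path of a row lying in the strip between $\row_{\min\Set{j,j'}-1}$ and $\row_{\max\Set{j,j'}+d+1}$. The directions fit, since in-paths run right to left and out-paths left to right (with respect to the parametrisation fixed by the tiles).

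\textbf{Constructing $J_1$.} The start of $P$ lies on the centre brick of $\tile_{i,j,d}$. We prepend a path in $W$ that starts on $Q_a$ and runs to $\Start{P}$. We take it inside the tile: go along the upper path $\outPath_j$ of the tile or a nearby out-path, then down some $Q_\ell$ (all vertical paths are oriented consistently) to reach the centre brick. Symmetrically, $\End{P}$ lies on the centre of $\tile_{i',j',d}$. We append a path inside that tile from $\End{P}$ to its right path $Q_{i'+2d+1}$, or, after adjusting, to $Q_b$, using the out-path on the lower side of the centre brick.

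\textbf{Constructing $J_2$.} We take $J_2$ to be a subpath of an in-path $\inPath_m$ from $Q_b$ to $Q_a$. We choose $m$ so that $\row_m$ is a row of the strip far from the rows used by $J_1$, namely $m=\min\Set{j,j'}-1$ or $m=\max\Set{j,j'}+d+1$. Then the row condition $\max\Set{a,a'}-\min\Set{b,b'}>1$ of \cref{def:cross_slice} holds. The condition $\Abs{i-i'}\ge d+1$ ensures that the two tiles do not overlap in columns. Hence the entry segment and the exit segment of $J_1$ use disjoint vertical ranges, and all the pieces stay inside $S$.

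\textbf{Main obstacle.} The hard part is disjointness. $P$ is an arbitrary path and may intersect the wall segments we chose for the extensions of $J_1$ and for $J_2$. To handle this, we shortcut: replace $P$ by the subpath between its last visit to the entry structure and its first visit afterwards to the exit structure. Any vertex of $P$ on $J_2$ would itself give a shorter cross or permit rerouting, so we may choose $J_2$'s row among the two candidate boundary rows to avoid $P$. We must also check that the shortened path still yields endpoints on $Q_a$ and $Q_b$ whose rows satisfy the row-gap condition. This bookkeeping is what requires the most care.
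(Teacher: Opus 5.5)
There is a genuine gap: $J_2$ cannot be a straight segment of a single in-path $\inPath_m$ at a boundary row.

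\textbf{Why the straight segment fails.} A cross has to be X-shaped. On $Q_a$ the vertex $\Start{J_1}$ must lie above $\End{J_2}$, and on $Q_b$ the vertex $\Start{J_2}$ must lie above $\End{J_1}$. This is exactly how crosses are used in \cref{lem:many_cross_slices_yield_minor}, where the part of the left column above $\Start{J^i_1}$ and the part below $\End{J^i_2}$ are contracted, and likewise on the right.

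Inside the wall you can only move right along out-paths, left along in-paths and down along vertical cycles. Every vertical cycle between $Q_a$ and $Q_b$ meets your segment of $\inPath_m$. So the wall pieces of $J_1$ can never pass from above row $m$ to below it without hitting $J_2$.
\begin{itemize}
\item For $m=\min\Set{j,j'}-1$ this forces $\End{J_2}$ to lie above $\Start{J_1}$ on $Q_a$. Your pair then has the same shape as the segments of $\inPath_m$ and $\outPath_{m+2}$, which exist in every wall. It may satisfy the literal inequality of \cref{def:cross_slice}, but so does that pair of wall segments. Under that reading every wall would contain cross slices, and \cref{lem:no_cross_slice_implies_flat} would be vacuous.
\item For $m=\max\Set{j,j'}+d+1$, already when $j=j'$, the in-path $\inPath_m$ contains the upper side of the start centre. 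The piece of $J_1$ that must enter this centre from above row $m$ therefore meets $J_2$.
\end{itemize}
Your ``main obstacle'' paragraph (shortcutting $P$, choosing the row that avoids $P$) does not address this. More fundamentally, the proposal never uses the one property of $P$ that creates non-planarity: $P$ avoids the wall between the two tiles.

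\textbf{The missing idea.} The second path must change rows \emph{between} the two tiles. One wall piece of $J_1$ runs from $Q_a$ above $\End{J_2}$ into the start centre; the other runs from the end centre to $Q_b$ below $\Start{J_2}$. A disjoint wall path $J_2$ with interleaved endpoints must therefore pass below the start centre and above the end centre, so it has to descend strictly between them.

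This is what $\Abs{i-i'}\ge d+1$ is used for: it gives a vertical cycle $Q$ of $W$ separating the two tiles. It does not make tiles spanning $2d+2$ columns column-disjoint, as you claim.

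In your case $i<i'$, let $J_2$ run along an in-path above the end centre from $Q_b$ to $Q$, then down $Q$, then along an in-path below the start centre to $Q_a$. Disjointness from $J_1$ then comes from $P$ being a jump, i.e.\ internally disjoint from $W$, as in every application. It does not come from a choice of row.

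This is the paper's proof in the mirrored case $i>i'$. There, $P$ is extended inside the tiles to the upper right corner of one tile and the lower left corner of the other. The second path $P''$ is a wall path along out-paths that switches rows on $Q$.

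\textbf{A smaller point.} The reduction between $i<i'$ and $i>i'$ is not ``reversing start and end'', since a directed path cannot be reversed. It is the reflection of the wall that swaps in- and out-paths, i.e.\ a change of parametrisation.
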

\begin{proof}
	Without loss of generality we fix the parametrisation of $W$ such that the centres of the tiles have an out-path as upper and an in-path as lower perimeter.
	The proof for the other case can be obtained by swapping in- and out-paths in the proof.
	Furthermore, we assume without loss of generality that $i> i',$ the other case work analogously.
	
	Let $\ell \coloneqq \max\Set{j,j'}+d$ and $u \coloneqq \min\Set{j,j'},$ that is, the \hyperref[def:strip]{strip} between row $\row_{\ell}$ and $\row_u$ contains all vertices of $\tile_{i,j,d}$ and $\tile_{i',j',d}.$
	Now we build a path $P'$ extending $P$ by starting in the upper right corner of $\tile_{i,j,d},$ then within $\tile_{i,j,d}$ reach $\Start{P},$ follow $P$ to $\tile_{i',j',d}$ and then, within $\tile_{i',j',d},$ reach the lower left corner of $\tile_{i',j',d}.$
	
	As $\Abs{i-i'} \geq d+1,$ there is a vertical cycle $Q$ separating the two tiles.
	
	Next, we build a path $P''$ by starting in the intersection of $Q_{i'}$ and $\outPath_{\ell - 1},$ following along $\outPath_{\ell - 1}$ until meeting $Q,$ then following along $Q$ until meeting $\outPath_{u + 1}$ and finally following $\outPath_{u + 1}$ until meeting $Q_{i+d}.$
	
	The two paths $P'$ and $P''$ yield the desired cross.
\end{proof}

\begin{lemma}
	\label{lem:perimeter_jump_yields_cross_slice}
	Let $W$ be a wall and $\mathcal{T}$ a \hyperref[def:tiling]{tiling} of $W.$
	If there is a perimeter jump $P$ over $W,$ then $W$ contains a cross slice.
\end{lemma}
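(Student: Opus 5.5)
The plan is to reduce to the situation of \cref{lem:local_jump_yields_cross_slice}. Close the perimeter jump up into a path that runs from deep inside the wall to far away from where it started. This path then plays the role of the long path $P$ in that lemma, and the cross comes from the same two-path construction.

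Let $P$ be a perimeter jump over $W$. By symmetry (reversing all edges and switching the parametrisation of $W$), assume that $\Start{P}$ lies in a tile $\tile_{i,j,d}$ at $W$-distance at least $2$ from $\per{W}$, and that $P$ is otherwise disjoint from $W$.

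\textbf{Step 1: where the path ends.} Since $P$ is a perimeter jump, $\End{P}$ is not in $W$, and $P$ has to be continued. I read the definition, together with its use alongside jumps over all of $W$, as saying that the relevant perimeter jumps reach $\per{W}$, that is, $C_1$ or $C_k$. Treated as a $\V{W}$-path, $P$ then runs from the interior tile to a vertex $v$ of the inner or outer perimeter cycle.

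\textbf{Step 2: pick a target far away horizontally.} The perimeter cycle meets every vertical path $Q_m$. Follow $C_1$ (respectively $C_k$) from $v$ in its direction until it reaches a vertical path $Q_{i'}$ with $\Abs{i-i'}\geq d+1$. Such a path exists because the cycle winds around the whole wall. This gives a path $P^{+}$ from the centre region of $\tile_{i,j,d}$ to a vertex of $Q_{i'}$ on a perimeter row.

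\textbf{Step 3: build the two crossing paths.} Mimic the proof of \cref{lem:local_jump_yields_cross_slice}. The first path $P'$ starts at the upper right corner of $\tile_{i,j,d}$, moves inside the tile to $\Start{P}$, and then follows $P^{+}$. Since the tile has $W$-distance at least $2$ from $\per{W}$, there are at least two rows separating it from $C_1$ and $C_k$. This gives the row gap required in \cref{def:cross_slice}. The second path $P''$ runs along a row near the tile, switches along a vertical cycle $Q$ separating columns $i$ and $i'$, and then runs along a row on the other side. The directions of $P''$ are chosen according to the parametrisation, as in \cref{lem:local_jump_yields_cross_slice}. The slice between $Q_{\min\{i,i'\}}$ and $Q_{\max\{i,i'\}+d}$ then contains the cross formed by $P'$ and $P''$, and we check the endpoint conditions of \cref{def:cross_slice}.

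\textbf{Main obstacle.} The hard part is the row-index condition $\max\{a,a'\}-\min\{b,b'\}>1$, together with getting the endpoints of $P'$ and $P''$ on the correct sides $Q_i$ and $Q_{i+\ell}$ with compatible directions. In particular, the perimeter segment used in Step 2 has a fixed orientation. I would first try handling the inner and outer perimeter separately. In each case, choose the row used by $P''$ on the side opposite to where $P^{+}$ ends, so that the required inequality follows directly from the distance-$2$ assumption.
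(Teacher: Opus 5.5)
\textbf{There is a genuine gap: your reduction to \cref{lem:local_jump_yields_cross_slice} breaks at Step~2, because it misreads the geometry of the cylindrical wall.}

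In this paper the vertical paths $Q_1,\dots,Q_k$ are the cut-open concentric cycles $C_1,\dots,C_k$. The perimeter $\per{W}=C_1\cup C_k$ consists of the two extreme ones. So the perimeter cycle does not meet every vertical path: $C_1$ is disjoint from $Q_2,\dots,Q_k$ and meets only the rows. Walking along $C_1$ from $v$ changes the row but never the column.

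You therefore never reach a $Q_{i'}$ with $\Abs{i-i'}\geq d+1$. Your endpoint also stays on the perimeter rather than in a tile centre. So neither hypothesis of \cref{lem:local_jump_yields_cross_slice} becomes available. The distance-$2$ assumption only places two vertical paths between $\tile$ and the perimeter, and $d$ may be much larger than that.

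Step~3 has the same confusion. Every row meets $C_1$ and $C_k$, so no row separates the tile from the perimeter; $W$-distance to $\per{W}$ counts vertical paths. The row gap must come from how you choose the two paths. That is exactly the part you leave as the ``main obstacle''.

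\textbf{The missing idea is to use the two separating vertical paths directly; no extension of $P$ is needed.} This is what the paper does.
\begin{enumerate}
\item Say $\End{P}\in Q_1$. Let $Q_i,Q_{i+1}$ separate $\tile$ from $Q_1$, where the left path $Q_\ell$ of $\tile$ satisfies $\ell>i+1$. Work in the slice between $Q_{i+1}$ and $Q_\ell$, and let $\inPath_j$ be the lower path of $\tile$.
\item The first path is the segment of $\outPath_{j+1}$ from $Q_{i+1}$ to $Q_\ell$.
\item The second path starts on $Q_\ell$ and runs inside $\tile$ to $\Start{P}$. It follows $P$ to $Q_1$, continues along $Q_1$ to $\outPath_{j+2}$, and finally follows $\outPath_{j+2}$ to $Q_{i+1}$.
\item Out-paths run from $C_1$ towards $C_k$, so every segment is traversed forwards.
\item The second path starts above row $j+1$ on the right side of the slice and ends below row $j+1$ on the left side. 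It gets across only by leaving the slice through $P$, which jumps over $Q_i$ and $Q_{i+1}$. This gives the cross.
\end{enumerate}
The case $\End{P}\in Q_k$ is the mirror image using in-paths. The case where $P$ starts on the perimeter follows by reversing all edges.
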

\begin{proof}
	We consider the case where $P$ ends on the perimeter of $W$ (the case where $P$ starts on the perimeter works similarly).
	Let $\tile \in\tiling$ be the \hyperref[def:tile]{tile} in which $P$ starts.
	There are two vertical paths, $Q_i$ and $Q_{i+1},$ separating $\tile$ from $\per{W}.$
	Without loss of generality, assume that the column index of $\tile$ is larger than $i+1$ and $\End{P}$ lies on $Q_1.$
	We consider the slice between $Q_{i+1}$ and the left path $Q_{\ell}$ of the perimeter of $\tile.$
	Let $\inPath_j$ be the lower path of the perimeter of $\tile.$
	Then, $\outPath_{j+1}$ contains a path $P_1$ from $Q_{i+1}$ to $Q_{\ell}$ that is disjoint from $P.$
	Next, we extend $P$ to a path $P_2$ by starting with the shortest path within $\tile$ from $Q_{\ell}$ to $\Start{P},$ then we add $P,$ from $\End{P}$ follow along $Q_1$ until meeting $\outPath_{j+2},$ and then follow $\outPath_{j+2}$ until reaching $Q_{i+1}.$
	Together, $P_1$ and $P_2,$ yield a cross over the slice between $Q_{i+1}$ and $Q_{\ell}.$
\end{proof}

Cross slices are the obstruction to flatness that we use in our proofs in order to construct the cross-row grid.
Thus, we prove next that walls without cross slices do not allow for certain non-planar behaviour.

\begin{lemma}
	\label{lem:no_jump_over_column_K5}
	Let $W$ be a wall of order $k\geq 5$ in a digraph $D$ that is free of cross slices.
	Then, there is no directed path that is internally disjoint from $W$ whose start- and end-vertex are separated by two vertical paths of $W.$
\end{lemma}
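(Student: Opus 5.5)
We argue by contraposition: assume there is a directed path $P$, internally disjoint from $W$, whose start- and end-vertex are separated in $W$ by two vertical paths $Q_a$ and $Q_b$. From $P$ we build a cross slice, contradicting the hypothesis. The model is the proof of \cref{lem:perimeter_jump_yields_cross_slice}. There we extend the jump inside the wall on one side and route a second path through the wall across the slice that the jump leaps over.

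\textbf{Setup.} Let $u \coloneqq \Start{P}$ and $v \coloneqq \End{P}$. Up to mirroring (switching the parametrisation) we may assume $u$ lies strictly to the right of $Q_b$ and $v$ strictly to the left of $Q_a$, with $a<b$. We may also replace $Q_a,Q_b$ by the two vertical paths separating $u$ from $v$ that lie closest to $v$ and $u$ respectively. The slice we will use is the one between $Q_a$ and $Q_b$, possibly widened by one column on each side.

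\textbf{Building $J_1$.} Let $\row_j$ be a row meeting a vertex near $u$. Extend $P$ backwards within $W$ by a path that starts on $Q_b$ and reaches $u$, travelling along a horizontal path of the row through $u$ (or an adjacent row). Extend $P$ forwards from $v$ inside $W$: follow $v$'s row or vertical path and then a horizontal path until reaching $Q_a$. The resulting path goes from $Q_b$ to $Q_a$. Its interior avoids the region strictly between $Q_a$ and $Q_b$, apart from the short horizontal stubs at its ends.

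\textbf{Building $J_2$.} Choose the connecting pieces so that both endpoints of $J_1$ lie in rows at one height, say around $\row_j$. Then take a second path going from $Q_a$ to $Q_b$, as in \cref{lem:local_jump_yields_cross_slice}:
\begin{itemize}
\item start on $Q_a$ in a row at least two rows away, using an out-path in the correct direction, possibly routed along $Q_a$ vertically;
\item stay strictly inside the slice;
\item end on $Q_b$.
\end{itemize}
Because the cylinder has $k\ge 5$ rows, there is room to choose rows $\row_{j+2}$ or $\row_{j+3}$ so that the condition $\max\Set{a,a'}-\min\Set{b,b'}>1$ of \cref{def:cross_slice} holds. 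The horizontal paths of these rows are disjoint from the stubs of $J_1$. The pair $J_1,J_2$ is then a cross in this slice, a contradiction.

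\textbf{Main obstacle.} The delicate step is directional. Depending on which row types $u$ and $v$ sit on, the available horizontal paths run in fixed directions, so one must pick in-paths versus out-paths correctly. One must also possibly use the cycle structure, going around the vertical paths, to reach the required corner. This is where the hypothesis $k\ge 5$ enters: it supplies enough rows of each type so that the stubs of $J_1$ and the route of $J_2$ can be chosen in properly oriented, row-separated positions. I would handle it by a short case analysis on whether $u$ and $v$ lie on an in-path, an out-path, or a vertical path only. In each case pick the nearest suitably oriented horizontal path, using the tile-free version of the routing in \cref{lem:local_jump_yields_cross_slice}.
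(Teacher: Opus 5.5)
Your proof has a genuine gap at the step \emph{Building $J_2$}: the pair $J_1,J_2$ you construct is not a cross. After your normalisation, $J_1$ leaves $Q_b$ and arrives at $Q_a$ at about the same row $\row_j$, while $J_2$ crosses the slice along $\row_{j+2}$ or $\row_{j+3}$. So both ends of $J_2$ lie on the same side of both ends of $J_1$, and no strip exists in which the endpoints alternate as a cross requires. What is required is this: on the left boundary, the start of the left-to-right path comes before the end of the right-to-left path in the direction of the cycles; on the right boundary, the start of the right-to-left path comes before the end of the left-to-right one. That is exactly the pattern that \cref{lem:many_cross_slices_yield_minor} contracts into the two crossing edges of a row of $\CrossRowGrid{t}$.

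All you verify is the row inequality of \cref{def:cross_slice}. But the same endpoint pattern occurs in every wall without any help from $P$: take the segment of $\inPath_j$ from $Q_b$ to $Q_a$ together with the segment of $\outPath_{j+2}$ from $Q_a$ to $Q_b$. If this counted as a cross, no wall of order at least $5$ would be free of cross slices, and the lemma would be vacuous. Put differently, your argument never uses that $P$ is non-planar with respect to $W$. $P$ only serves as some right-to-left connection, which an in-path already provides. The directional case analysis you flag as the main obstacle is not where the difficulty lies.

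What is missing is placing the second path so that its ends interleave with those of $P$. The paper does this by keeping the rows of $P$. Write $\row_s$ and $\row_e$ for the rows of the start of $P$ on $Q_r$ and of its end on $Q_\ell$. The second path $P'$ then goes as follows:
\begin{itemize}
\item it starts on $Q_\ell$ at $\outPath_{e-1}$, one row before the end of $P$;
\item it follows $\outPath_{e-1}$ to a separating vertical path $Q_i$;
\item it runs along $Q_i$ to $\outPath_{s+1}$;
\item it follows $\outPath_{s+1}$ to $Q_r$, ending one row after the start of $P$.
\end{itemize}
The separating path $Q_i$ lets $P'$ change rows without touching an endpoint of $P$, and this is the real use of the hypothesis that two vertical paths separate the endpoints.

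Your idea of normalising rows by running around a vertical cycle outside the slice is compatible with this. However, the rows must then be chosen so that the ends alternate. For example, $J_1$ should end on $Q_a$ at least two rows after (in the cycle direction) the row where it starts on $Q_b$, and $J_2$ should be the out-path segment of a row strictly in between. That choice of rows is precisely the step your proposal gets wrong.
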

\begin{proof}
	Suppose such a path $P$ exists and let $Q_i$ and $Q_{i+1}$ be the vertical paths of the wall separating the start- and the end-vertex of $P.$
	By possibly extending $P$ we can assume that $P$ starts and ends in branch vertices of $W.$
	Additionally, assume without loss of generality that $P$ starts on a vertical path $Q_r$ with $r \geq i+1.$
	
	Let $Q_{\ell}$ be the vertical path containing $\End{Q}$ and $\row_{u}$ be the row containing $\End{Q}.$
	Moreover, let $\row_{b}$ be the row containing $\Start{Q}.$
	We show that the slice between $Q_{\ell}$ and $Q_{r}$ contains a cross.
	To this end we construct a path $P'$ starting in the intersection of $\outPath_{u-1},$ from there following $\outPath_{u-1}$ until reaching $Q_i$ (this subpath might be empty as $i = \ell$ is possible), then we follow $Q_i$ until reaching $\outPath_{b+1},$ then we follow along $\outPath_{b+1}$ until reaching $Q_{r}$ (again this part might be empty as possibly $r = i+1$).
	Together, $P$ and $P'$ yield a cross in the slice between $Q_{\ell}$ and $Q_{r},$ a contradiction.
\end{proof}

\begin{lemma}
	\label{lem:no_jump_over_row_K5}
	Let $W$ be a wall of order $k\geq 5$ in a digraph $D$ that is free of cross slices.
	Then, there is no directed path that is internally disjoint from $W$ whose start- and end-vertex are separated by a row of $W$ and at least one of the endpoints does not lie on $\per{W}.$
\end{lemma}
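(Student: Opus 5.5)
We argue by contradiction, in the same way as \cref{lem:no_jump_over_column_K5}. Suppose $P$ is a directed path, internally disjoint from $W$, whose start- and end-vertex are separated by a row of $W$, and at least one endpoint of $P$ does not lie on $\per{W}$. Let $\row_s$ be a separating row.

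\textbf{Normalisation.} First I would extend $P$ inside $W$ so that it starts and ends at branch vertices. The extension steps only one brick away and stays off the separating row, so the endpoints remain separated by $\row_s$. If the two endpoints are also separated by two vertical paths, \cref{lem:no_jump_over_column_K5} already gives a contradiction. Hence we may assume that $\Start{P}$ and $\End{P}$ lie on vertical paths $Q_r$ and $Q_\ell$ with $\Abs{r-\ell}\leq 1$, in rows $\row_b$ and $\row_u$ on opposite sides of $\row_s$. Using the symmetry of parametrisations, and reversing all edges if necessary, we may assume that $\Start{P}$ lies above $\row_s$ and $\End{P}$ below it.

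\textbf{The second path.} Next I build a path $P'$ inside $W$ that goes from near $\End{P}$'s side to near $\Start{P}$'s side horizontally, using the vertical direction available to it. The natural choice is to enlarge the slice: take $Q_{\ell-1}$ and $Q_{r+1}$, or, if necessary, two vertical paths a few columns further out, which exist since $k\geq 5$. Then $P'$ runs along an out-path (or in-path) just beyond the rows of $\Start{P}$ and $\End{P}$, down a vertical path $Q$ between them, and along another horizontal path to the far side. As in \cref{lem:local_jump_yields_cross_slice}, the endpoints of $P$ and $P'$ are placed on the two boundary vertical paths of the slice. The row gap between $\Start{P}$ and $\End{P}$ is at least two because they are separated by the full row $\row_s$, so the condition $\max\Set{a,a'}-\min\Set{b,b'}>1$ of \cref{def:cross_slice} holds, giving a cross slice and a contradiction.

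\textbf{Main obstacle.} The hard step is the case where one endpoint lies on $\per{W}$, i.e.\ on $C_1$ or $C_k$ in $Q$-terms, so that no vertical path lies beyond it and $P'$ cannot wrap around that side of the slice. Here I would use the other endpoint, which is not on the perimeter, in the manner of \cref{lem:perimeter_jump_yields_cross_slice}. We choose the slice between the perimeter path and a vertical path on the far side of the interior endpoint. We extend $P$ along the perimeter cycle, which carries $P$ to a row on the correct side. Then $P'$ is a single horizontal path through the rows strictly between. The assumption that at least one endpoint avoids $\per{W}$ is exactly what guarantees that this slice has room for $P'$ and is disjoint from $P$. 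The remaining care is checking directions: the in- and out-paths alternate, so a horizontal path of the needed direction exists in every pair of consecutive rows.
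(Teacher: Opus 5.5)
Your frame matches the paper's: argue by contradiction, use \cref{lem:no_jump_over_column_K5} to reduce to endpoints on the same or adjacent vertical paths, and exhibit a cross. The gap is in the one step with real content, the construction of the cross. You borrow the second path from \cref{lem:local_jump_yields_cross_slice}: horizontal pieces joined by a vertical path $Q$ ``between'' the endpoints. That works there because a vertical path separates the two endpoints. After your own reduction to $\Abs{r-\ell}\leq 1$, no vertical path lies strictly between the endpoints' columns; what separates them is the row $\row_s$. Read as written, with horizontal pieces on rows beyond those of $\Start{P}$ and $\End{P}$, your $P'$ fails in one of two ways. If its vertical piece uses the column of $\Start{P}$ or $\End{P}$, it runs through that endpoint. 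Otherwise both endpoints of $P$ lie on the same side of $P'$ inside the slice. Then any extension of $P$ within $W$ to the slice boundary ends on that side, the ends of the two paths do not alternate, and there is no cross. Widening the slice ``a few columns further out'' does not help, because the obstruction is topological, not a lack of room. Your verification only uses the row gap between the ends of $P$. It never checks that the two paths you actually build are disjoint and have alternating ends, and that is exactly where it breaks.

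The missing idea is that the second path must pass between the endpoints, i.e.\ use a horizontal segment on a row strictly between them. The natural case split is by columns, as in the paper. If $\Start{P}\in C_i$ and $\End{P}\in C_{i-1}$ (resp.\ $C_{i+1}$), then $P$ already ends on the two boundary paths of the width-two slice. The segment of $\outPath_j$ (resp.\ $\inPath_j$) of the separating row between these two columns runs opposite to $P$ and forms the cross. This covers your ``perimeter'' case directly, with no rerouting along $C_1$. If both endpoints lie on $C_i$, in rows $m$ and $m'$ with $\Abs{m-m'}\geq 2$, the hypothesis that one endpoint avoids $\per{W}$ gives $1<i<k$. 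This is the only place the hypothesis is used, and it is needed: a jump with both ends on $C_1$ can be drawn in the face bounded by $C_1$. The paper then builds $P_1$ as follows: from $C_{i-1}$ along $\outPath_m$ to $C_i$, along $C_i$ to $\Start{P}$, through $P$, along $C_i$ to $\outPath_{m'+1}$, and along that to $C_{i+1}$. It takes $P_2$ to be the segment of $\inPath_{m+1}$ from $C_{i+1}$ to $C_{i-1}$, which avoids $P_1$ because the endpoint rows differ by at least two.
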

\begin{proof}
	Suppose there is such a path $P$ and let $\row_j$ be the row separating its start- and end-vertex.
	Let $Q_i$ be the vertical path of $W$ containing $\Start{P}.$
	By \cref{lem:no_jump_over_column_K5}, we obtain that $\End{P}$ lies on $C_{i+1},$ $C_i$ or $C_{i+1}.$
	
	Consider $\End{P}$ lying on $C_{i-1},$ then $P$ together with the subpath of $\outPath_j$ yield a cross in the slice between $C_{i-1}$ and $C_i,$ a contradiction.
	
	Similarly, in case that $\End{P}$ lies on $C_{i+1}$ we obtain a cross in the slice between $C_i$ and $C_{i+1}$ built by $P$ and the subpath of $\inPath_j$ starting on $C_{i+1}$ and ending on $C_i,$ which again yields a contradiction.
	
	Next, consider the case that $\End{P}$ lies on $C_{i}$ as well.
	By assumption, we have $1 < i < k.$
	Let $\row_m$ be the row containing $\Start{P}$ and $\row_{m'}$ be the row containing $\End{P},$ so we have $\Abs{m-m'} \geq 2.$
	We show that there is a cross in the slice between $C_{i-1}$ and $C_{i+1}.$
	First, we construct a path $P_1$ from $C_{i-1}$ to $C_{i+1}.$
	We start in the intersection of $C_{i-1}$ and $\outPath_m$ and follow $\outPath_m$ until reaching $C_i,$ we follow $C_i$ until reaching $\Start{P}$ (this subpath is empty if $\Start{P}$ lies on $\outPath_m$), then we append $P,$ next, from $\End{P}$ we follow $C_i$ again until reaching $\outPath_{m'+1},$ we then follow $\outPath_{m'+1}$ until reaching $C_{i+1}.$
	Second, we choose $P_2$ to be the subpath of $\inPath_{m+1}$ that starts in $C_{i+1}$ and ends in $C_{i-1}.$
	Together $P_1$ and $P_2$ yield a cross over the slice between $C_{i-1}$ and $C_{i+1},$ a contradiction.
\end{proof}

\begin{lemma}
	\label{lem:no_cross_slice_implies_flat}
	Let $D$ be a digraph and $W$ a wall in $D.$
	If $W$ does not contain any cross \hyperref[def:slice]{slices}, then $W$ is \hyperref[def:flat_wall]{flat}.
\end{lemma}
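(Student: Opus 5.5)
The plan is to verify the four conditions of \cref{def:flat_wall} with $A=\emptyset$, using \cref{lem:no_jump_over_column_K5,lem:no_jump_over_row_K5} as the main tools. Write $W$ as a wall of order $k+4$, with inner wall $W^{-}$ and border $\border{W}$. Let $D'$ be the strong component of $D$ containing $W$. Condition \ref{flat:apex_set} is trivial.

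For \ref{flat:separation}, let $X_1$ be $\border{W}$ together with all vertices reachable from $W^{-}$ in $D'-\border{W}$, and let $Y_1=\Brace{\V{D'}\setminus X_1}\cup\border{W}$. Symmetrically, let $X_2$ be $\border{W}$ together with all vertices that reach $W^{-}$ in $D'-\border{W}$, and let $Y_2=\Brace{\V{D'}\setminus X_2}\cup \border{W}$. By construction, no edge leaves $X_1\setminus Y_1$ into $Y_1\setminus X_1$, and no edge enters $X_2\setminus Y_2$ from $Y_2\setminus X_2$. So both pairs are directed separations with separator $\border{W}$ in the required orientation, and the reachability demands hold by definition.

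For \ref{flat:rendition}, start from the unique planar embedding $\Gamma$ of $W$ in a cylinder. $\Omega_1$ and $\Omega_2$ are read off from the boundary cycles $Q_1$ and $Q_{k+4}$. Each $W^{-}$-bridge $C$ in $D'-\border{W}$ has all of its attachments in $W$. The key claim is that all attachments of a bridge lie on a single brick. To see this, take an in-attachment and an out-attachment of $C$. By \cref{def:bridge}, every vertex of $C$ lies on an $I$-$O$-path, so we obtain a directed path internally disjoint from $W$ between any in-attachment and any out-attachment of $C$.
\begin{itemize}
\item \cref{lem:no_jump_over_column_K5} forces any two such endpoints to be separated by at most one vertical path.
\item \cref{lem:no_jump_over_row_K5} forbids separation by a row unless both endpoints lie on the perimeter.
\end{itemize}
Combining these, every in-attachment and every out-attachment of $C$ lie on a common brick boundary. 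Since any two attachments are linked via a third attachment of the opposite type, the common brick is the same for all of them. We then draw each bridge inside the face of its brick, allowing crossings inside the face, so the drawing $\Gamma^{+}$ is consistent with $\Gamma$.

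Next we check the $\sphere$-decomposition condition: for a noose $C$ through $W$, there is no directed path from $H_1^{+}$ to $H_2^{+}$. Such a path would leave the rigid part through bridge vertices drawn inside faces. Decompose it into $W$-paths through bridges. Each such segment stays within one brick by the claim above. A path that crosses the noose must therefore use a bridge whose attachments lie on both sides of $C$. This is impossible because the noose meets $W$ only in vertices and cuts out whole bricks. Any remaining crossing would produce one of the jumps forbidden by the two lemmas, or a cross slice directly, as in \cref{lem:perimeter_jump_yields_cross_slice}.

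Condition \ref{flat:torus} concerns the rest of $D'$, namely the part outside the compass. I would glue the two cylinder boundaries, which carry $\Omega_1$ and $\Omega_2$, through a handle. Parts of $D'$ attached only to the border are drawn in the complementary region, giving a torus drawing consistent with $\rho$. The separation property for nooses in the rigid part then follows from the separations in \ref{flat:separation}.

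I expect the main obstacle to be the noose condition in \ref{flat:rendition}. It requires carefully showing that any directed path that escapes through a bridge, or reenters via the border, yields a jump over two vertical paths or over a row. This needs a case analysis on where the noose crosses the wall relative to the border and perimeter, since \cref{lem:no_jump_over_row_K5} excludes paths with both endpoints on $\per{W}$. The border of width two is what should absorb those perimeter cases.
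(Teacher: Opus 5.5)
Your outline follows the paper's: the same reachability-based separations for \ref{flat:separation}, bridges placed into bricks, the two jump lemmas for the sphere-decomposition, and a separate argument for the torus. However, two steps do not go through.

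\textbf{The single-brick claim in \ref{flat:rendition}.} This claim rests on a false inference. \cref{def:bridge} only says that every vertex of $C$ lies on \emph{some} $I$-$O$-path. It does not say that every in-attachment reaches every out-attachment. Take two paths $i_1 x o_1$ and $i_2 y o_2$ joined by the single edge $(y,x)$. This is weakly connected and satisfies the definition, yet $i_1$ does not reach $o_2$. Chaining such pieces gives a single bridge whose attachments drift over many bricks, even though each of its $I$-$O$-paths is local. Your step ``linked via a third attachment of the opposite type'' has the same error.

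Even for a pair that is directly linked, \cref{lem:no_jump_over_column_K5,lem:no_jump_over_row_K5} only place the two ends in a slice between $Q_i$ and $Q_{i+2}$ with no separating row. A jump over one vertical path is not excluded, so this is weaker than lying on a common brick. Your noose argument uses ``each segment stays within one brick'', so it falls with this claim.

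The paper makes no structural claim about bridges. It takes an arbitrary directed path $P$ between $H_1^{+}$ and $H_2^{+}$ and extends it so both ends lie on $W$. It then places the ends in a slice between $Q_i$ and $Q_{i+2}$ via \cref{lem:no_jump_over_column_K5} and prolongs them onto rows. Finally it uses that crossing the noose forces a whole row between them, which contradicts \cref{lem:no_jump_over_row_K5}.

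\textbf{Deriving \ref{flat:torus} from \ref{flat:separation}.} This does not work. Your construction of $X_1,X_2$ uses nothing about cross slices, so \ref{flat:separation} holds for every wall. If it implied \ref{flat:torus}, the hypothesis would play no role there.

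Concretely, consider a path that leaves a deep brick of $W^{-}$ and ends on the border cycle $Q_1$. Suppose its inner vertices are reachable from $W^{-}$ but lie on no $I$-$O$-path, so they are outside the compass and end up in your complementary region. This path respects both separations, since it exits only through the border and nothing re-enters $X_2$ from outside. Yet it joins $H_1^{+}$ to $H_2^{+}$ for a noose around that brick.

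This is exactly where the hypothesis must be used. The paper takes a path from $H_1^{+}$ into $D'$ minus the compass and uses strong connectivity of $D'$ to extend it into a perimeter jump. It then gets a cross slice from \cref{lem:perimeter_jump_yields_cross_slice}. You mention that lemma only in passing for \ref{flat:rendition}, but it is the key ingredient for \ref{flat:torus}.
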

\begin{proof}
	In favour of readability we use $D$ to refer to the strongly connected component of the digraph $D$ containing $W.$
	Let $k+4$ be the order of $W$ and let $W^{-} \subseteq W$ be the subwall of order $k$ contained in $W-\border{W}.$
	
	Let $X'_1$ be the minimal set of vertices containing every vertex that is reachable from $W^{-}$ in $D- \border{W}$ and let $Y_1 \coloneqq \V{D} \setminus X'_1.$
	Then, $\Separation{Y_1}{X_1}{}{r}$ where $X_1 \coloneqq X'_1 \cup \V{\border{W}}$ is one of the required separations for \cref{flat:separation}.
	Let $X'_2$ be the minimal set of vertices containing every vertex that reaches $W^{-}$ in $D-\border{W}$ and let $Y_2 \coloneqq \V{D} \setminus X'_2$
	Then, $\Separation{X_2}{Y_2}{}{r}$ with $X_2 \coloneqq X'_2 \cup \V{\border{W}}$ is the other separation required for \cref{flat:separation}.
	
	Consider the cylindrical society $\Brace{\compass{W},\Omega_1,\Omega_2}$ with sets $\V{\Omega_1}=\V{Q_1}$ and $\V{\Omega_2}=\V{Q_{k+4}}.$
	Let $\Delta'$ be obtained from a closed disk $\Delta''$ by removing an open disk disjoint from the boundary of $\Delta''.$
	Fix an embedding $\Gamma$ of $W$ with $\V{\Omega_1}$ being exactly the vertices in $\boundary{\Delta''}$ and $\V{\Omega_2}$ being exactly the vertices in $\boundary{\Delta'},$ both consistent with the appearance of the vertices along the vertical cycles in $W.$
	Now, $\skeleton\coloneqq \Brace{\Gamma,W,\emptyset}$ is a \hyperref[def:skeleton]{skeleton}.
	We obtain $\Gamma^{+}$ from $\Gamma$ by embedding a \hyperref[def:bridge]{bridge} of a brick with in- and out-attachments in more than one perimeter into such a brick.
	Otherwise, embed bridges into any brick they have an attachment in.
	
	\begin{claim}
		\label{flat:claim_1}
		The tuple $\rho \coloneqq \Brace{\Gamma^{+},\skeleton}$ is a \hyperref[def:sphere_decomposition]{$\sphere$-decomposition}.
	\end{claim}
	\begin{claimproof}
		Suppose there is an undirected cycle $C$ in $\rigidgraph{\skeleton}$ such that there is a directed path $P$ between $H^{+}_1$ and $H^{+}_2.$
		By definition of $H^{+}_1$ and $H^{+}_2,$ $\Start{P}$ can be reached from a vertex of $H^{+}_1$ or $H^{+}_2$ and $\End{P}$ can reach a vertex of $H^{+}_1$ or $H^{+}_2.$
		Thus, we can assume that $\Start{P}$ and $\End{P}$ are vertices of $W.$
		If there are at least two vertical paths separating $\Start{P}$ and $\End{P},$ then by \cref{lem:no_jump_over_column_K5}, we obtain a cross slice, a contradiction.
		So both $\Start{P}$ and $\End{P}$ lie in the slice between $Q_{i}$ and $Q_{i+2}$ for some $i.$
		By possibly prolonging $P$ at both ends, we can also assume that both $\Start{P}$ and $\End{P}$ lie in a row of $W$ and still in the slice between $Q_{i}$ and $Q_{i+2}.$
		By definition of $H^{+}_1$ and $H^{+}_2$ there is a whole row $\row_j$ separating $\Start{P}$ and $\End{P}.$
		By \cref{lem:no_jump_over_row_K5}, this implies that there is a cross slice in $W,$ a contradiction.
	\end{claimproof}
	
	\Cref{flat:claim_1} ensures \cref{flat:rendition}.
	
	
	\begin{claim}
		\label{flat:claim_2}
		The strongly connected component $D'$ has a torus decomposition centred at $\rho.$
	\end{claim}
	\begin{claimproof}
		Suppose there is an undirected cycle $C$ in $\rigidgraph{\skeleton}$ such that there is a directed path $P$ between $H^{+}_1$ and $H^{+}_2.$
		Assume without loss of generality that $H^{+}_2$ contains $D-\compass{W}.$
		By, \cref{flat:claim_1}, there are no jumps between $H^{+}_1$ and $H^{+}_2 \cap \compass{W},$ so $P$ is a path between $H^{+}_1$ and $D - \compass{W}.$
		As $D$ is strongly connected it can be extended into a perimeter jump over $W$ and thus, by \cref{lem:perimeter_jump_yields_cross_slice}, $W$ contains a cross slice, a contradiction. 
	\end{claimproof}
	
	\Cref{flat:claim_2} ensures \cref{flat:torus}, so in the end we obtain that $W$ is flat.
\end{proof}

We need the following local version of Menger's Theorem in order to complete the proof of \cref{thm:directed_flat_wall}.

\begin{theorem}[Menger's Theorem~\cite{menger1927allgemeinen}]
	\label{thm:directedlocalmenger}
	Let $D$ be a digraph and $X,Y\subseteq\V{D}$ be two sets of vertices, then the maximum number of pairwise disjoint directed $X$-$Y$-paths in $D$ equals the minimum size of a set $S\subseteq\V{G}$ such that every directed $X$-$Y$-path in $D$ contains a vertex of $S.$
\end{theorem}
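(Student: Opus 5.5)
We prove the two inequalities separately. The inequality "maximum $\leq$ minimum" is immediate. If $\mathcal{P}$ is a family of pairwise disjoint directed $X$-$Y$-paths and $S$ meets every directed $X$-$Y$-path, then every $P\in\mathcal{P}$ contains a vertex of $S$. Since the paths are disjoint, distinct paths contain distinct vertices of $S$, so $\Abs{\mathcal{P}}\leq\Abs{S}$. All the work is in the reverse inequality. We allow paths of length zero, so that vertices of $X\cap Y$ count as trivial $X$-$Y$-paths.

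For the reverse inequality we use induction on $\Abs{\E{D}}$, following the classical short proof. Let $k$ be the minimum size of an $X$-$Y$-separator in $D$.
\begin{itemize}
\item If $D$ has no edges, then the $X$-$Y$-paths are exactly the trivial paths on the vertices of $X\cap Y$. Hence $k=\Abs{X\cap Y}$, and these $k$ trivial paths are pairwise disjoint.
\item Otherwise pick an edge $e=\Brace{u,v}$. If every $X$-$Y$-separator of $D-e$ still has size at least $k$, the induction hypothesis applied to $D-e$ gives $k$ disjoint $X$-$Y$-paths, which are also paths in $D$.
\item Hence we may assume $D-e$ has an $X$-$Y$-separator $S$ with $\Abs{S}\leq k-1$. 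Then both $S_u\coloneqq S\cup\Set{u}$ and $S_v\coloneqq S\cup\Set{v}$ are $X$-$Y$-separators of $D$: a path avoiding $S$ must use $e$, and so it meets both $u$ and $v$. Therefore $\Abs{S_u}=\Abs{S_v}=k$, and in particular $u,v\notin S$.
\end{itemize}

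Next we apply the induction hypothesis in $D-e$ twice: once to the pair $(X,S_u)$ and once to the pair $(S_v,Y)$. For this we check that every $X$-$S_u$-separator $T$ of $D-e$ has size at least $k$; the argument for $(S_v,Y)$ is symmetric. Take any $X$-$Y$-path $P$ in $D$. If $P$ avoids $e$, it meets $S$ because $S$ separates in $D-e$. If $P$ uses $e$, it reaches $u$. In either case the initial segment of $P$ up to its first vertex of $S_u$ is an $X$-$S_u$-path in $D-e$, so it meets $T$. Hence $T$ is an $X$-$Y$-separator of $D$, which forces $\Abs{T}\geq k$. The induction hypothesis now gives two families in $D-e$:
\begin{itemize}
\item $k$ disjoint $X$-$S_u$-paths $P_1,\dots,P_k$, ending one in each vertex of $S_u$, and
\item $k$ disjoint $S_v$-$Y$-paths $R_1,\dots,R_k$, starting one in each vertex of $S_v$.
\end{itemize}
By shortening, each $P_i$ meets $S_u$ only in its end-vertex and each $R_j$ meets $S_v$ only in its start-vertex.

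The key claim, and the step I expect to need the most care, is that a $P_i$ and an $R_j$ can intersect only in a vertex of $S$ that is the end of $P_i$ and the start of $R_j$. Suppose instead that $P_i$ and $R_j$ share a vertex $z\notin S$. Then the initial segment of $P_i$ up to $z$ followed by the final segment of $R_j$ from $z$ forms an $X$-$Y$-walk in $D-e$ that avoids $S$, because neither segment touches $S$ except possibly at the excluded endpoints. A walk contains a path with the same endpoints, so this contradicts $S$ being a separator of $D-e$. The case $z=u$ or $z=v$ is handled the same way, since the relevant segments lie in $D-e$ and avoid $S$.

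Finally we glue. For each $s\in S$, concatenate the path $P_i$ ending at $s$ with the path $R_j$ starting at $s$. Concatenate the path $P_i$ ending at $u$, the edge $e$, and the path $R_j$ starting at $v$. By the key claim the resulting $k$ walks are paths and are pairwise disjoint, which completes the induction.
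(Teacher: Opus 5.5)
Your proof is correct. The paper gives no proof of this statement: it quotes Menger's theorem as a classical black box and uses it only to extract the separators $Z_1,Z_2$ in Step~I of Phase~I. What you supply is the standard short induction on the number of edges by edge deletion (as in Göring's proof), and it carries over to digraphs without change. Your route buys self-containment; the citation buys brevity.

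A few points deserve to be said explicitly:
\begin{itemize}
\item Directedness matters in exactly one place: showing that an $X$-$S_u$-separator of $D-e$ separates $X$ from $Y$ in $D$. It works because the initial segment of $P$ up to its first vertex of $S_u$ ends at or before the tail $u$ of $e$, so this segment lies in $D-e$. Symmetrically, the final segment from the last vertex of $S_v$ starts at or after the head $v$.
\item Your ``shortening'' step is automatic. Each vertex of $S_u$ is already the end of one of the $k$ disjoint paths, so no $P_i$ can contain a second vertex of $S_u$.
\item Pairwise disjointness of the glued paths needs two facts. Each $s\in S$ is the start of exactly one $R_j$, and the path $P_i$ ending in $u$ misses $S$ entirely.
\item Admitting paths of length zero is essential, not cosmetic. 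With only paths of length at least one the statement fails. Take a directed triangle with $X=Y$ equal to its vertex set: at most one disjoint path exists, but every separator has size at least two.
\end{itemize}
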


Now we want state the proof of our main theorem.
Simply put, we consider two cases: either there are enough long jumps to build a cross-row grid or we can find a large slice in the wall that does not contain any long jumps.
Then, we divide this slice into smaller parts.
Either we find short jumps in enough such parts to build a cross-row grid again, or we find a part free of any jumps which then yields the flat wall.

Based on this idea the proof is split into two phases.
A vertex of a wall $W$ is said to be \defineSubindex{marked}{vertex of a wall} if it belongs to a separator obtained from \cref{thm:directedlocalmenger} and \cref{lem:longjumps1}, or through \cref{lem:longjumpsphase2}.
A tile is \emph{marked} if it contains a marked vertex or is replaced by a marked vertex in the construction of an auxiliary graph (type \rom{1} or \rom{2}).
That is, we \say{mark} every vertex or tile involved in a long jump.
A \hyperref[def:slice]{slice} of $W$ is said to be \define{clear} if it does not contain vertices that are marked.
This happens in two steps, one for jumps between tiles of different colour and one for jumps between tiles of the same colour.
In each of the two steps we introduce families of marked tiles and vertices.
In the end we can bound the number of columns containing marked tiles.
This gives us a clear slice of $W,$ which we then use in phase two of the proof.
In this second phase we split the clear slice into smaller parts, identify among those a part which does not contain any jumps or crossings, and prove that this part is flat.

\directedFlatWallThm*

\begin{proof}
	Let $r,t\in\N$ be positive integers, $D$ be a digraph and $W$ be a cylindrical wall of order $\WallFkt{t,r},$ where $\WallFkt{t,r}$ will be determined throughout the proof.
	So, we introduce constants $d_1$ and $d_2,$ for which we make more and more assumptions in the form of lower bounds.
	Let us assume
	\begin{equation*}
		\WallFkt{t,r}\geq 3d_1.
	\end{equation*}
	Then, $W$ is a cylindrical wall of order $3d_1$ with vertical paths $Q_1, \dots,$ $Q_{3k},$ in-paths $\inPath_1, \dots,$ $\inPath_{3k}$ and out-paths $\outPath_1, \dots,$ $\outPath_{3k}$ and the triadic partition $\mathcal{W} = (W, d_1, \widetilde{W}_1,$ $\widetilde{W}_2,$ $\widetilde{W}_3,$ $\widetilde{W}^1,$ $\widetilde{W}^2,$ $\widetilde{W}^3).$
	Throughout the proof let us fix
	\begin{equation*}
		w \coloneqq 2\Fkt{f_w}{\cliqueToCrossRowGridFunction{t}}+2^7\Fkt{f_P}{\cliqueToCrossRowGridFunction{t}}.
	\end{equation*}
	
	\hypertarget{phaseone}{\paragraph{Phase~\rom{1}}}
	\hyperlink{phaseone}{Phase~\rom{1}} is divided into $2^{11+8\Fkt{f_P}{\cliqueToCrossRowGridFunction{t}}}\Fkt{f_P}{\cliqueToCrossRowGridFunction{t}}$ \emph{rounds},
	each of which is divided into two steps, \emph{\hyperlink{stepone}{Step~\rom{1}}} and \emph{\hyperlink{stepone}{Step~\rom{2}}}.
	Let $i \in [2^{11+8\Fkt{f_P}{\cliqueToCrossRowGridFunction{t}}} \Fkt{f_P}{\cliqueToCrossRowGridFunction{t}}].$
	After round $i$ is complete we require the following sets and graphs as its \emph{output} which then can be used in round $i+1.$
	\begin{itemize}
		\item $F_{\rom{1},i}\subseteq\V{D}$ such that
		\begin{equation*}
			\Abs{F_{\rom{1},i}}\leq i\Brace{2^{11}\Brace{w+1}^2\Fkt{f_P}{\cliqueToCrossRowGridFunction{t}}+2^5\Brace{w+1}^2\Fkt{f_P}{\cliqueToCrossRowGridFunction{t}}},
		\end{equation*}
			which are the vertices marked in round $i$
		\item $D_{i}\coloneqq D-F_{\rom{1},i},$
		\item $\mathcal{F}_{\rom{1},i}\subseteq \bigcup_{\xi,\xi'\in[w+1]\tiling_{\widetilde{W}_2,d_1,w,\xi,\xi'}}$ of size at most
		\begin{equation*}
			i\Brace{2^{11}\Brace{w+1}^2\Fkt{f_P}{\cliqueToCrossRowGridFunction{t}}+2^5\Brace{w+1}^2\Fkt{f_P}{\cliqueToCrossRowGridFunction{t}}},
		\end{equation*}
			which are the tiles marked in round $i,$ and
		\item a slice $W_i$ of $W_{i-1}$ of width
		\begin{align*}
			\Big(&\Brace{2^{12}\Brace{w+1}^3\Fkt{f_P}{\cliqueToCrossRowGridFunction{t}}+1} \cdot \\
				&\Brace{2^6\Brace{w+1}^3\Fkt{f_P}{\cliqueToCrossRowGridFunction{t}}+1}\Big)^{2^{11+8\Fkt{f_P}{\cliqueToCrossRowGridFunction{t}}}\Fkt{f_P}{\cliqueToCrossRowGridFunction{t}}-i} \cdot d_2
		\end{align*}
		that is clear with respect to $F_{\rom{1},i}$ and $\mathcal{F}_{\rom{1},i},$ such that every long jump over $W_{i}$ in $D_{i}$ contains a vertex of some tile in $\mathcal{F}_{\rom{1},j}\setminus\mathcal{F}_{\rom{1},j-1}$ for every $j\in[i].$
	\end{itemize}
	For $i=0$ we define $D_0\coloneqq D,$ $W_0\coloneqq \widetilde{W}_2$ as a slice of itself, and $F_{\rom{1},i} \coloneqq \emptyset$ as well as $\mathcal{F}_{\rom{1},i} \coloneqq \emptyset.$
	In this context, whenever we ask for a clear slice of the current slice $W_{i}$ or $W'_i$ we ask for a slice $W'$ such that there do not exist $\xi,\xi'\in[w+1]$ whose corresponding \hyperref[def:tiling]{tiling} of $\widetilde{W}_2$ has a \hyperref[def:tile]{tile} $\tile$ that is marked or contains a vertex of any separator set found so far, which satisfies $\V{\tile}\cap\V{W'}\neq\emptyset.$

	To be able to find a slice of width $d_2$ after the last round we therefore must fix
	\begin{align*}
		d_1 \geq \Big(&\Brace{2^{12}\Brace{w+1}^3\Fkt{f_P}{\cliqueToCrossRowGridFunction{t}}+1} \cdot\\
		&\Brace{2^6\Brace{w+1}^3\Fkt{f_P}{\cliqueToCrossRowGridFunction{t}}+1}\Big)^{2^{11+8\Fkt{f_P}{\cliqueToCrossRowGridFunction{t}}}\Fkt{f_P}{\cliqueToCrossRowGridFunction{t}}} \cdot d_2,
	\end{align*}
	and we further assume $d_2\geq 2^{16}\Fkt{f_W}{\cliqueToCrossRowGridFunction{t}}^2$ to make sure we can apply \cref{lem:longjumps1} and \cref{lem:longjumpsphase2} in every round.
	Note that this is not yet the final lower bound on $d_2,$ just an intermediate assumption.
	
	Next we describe the steps we perform in every round.
	Let $i\in[2^{11+8\Fkt{f_P}{\cliqueToCrossRowGridFunction{t}}}\Fkt{f_P}{\cliqueToCrossRowGridFunction{t}}]$ and suppose we are given sets $F_{I,i-1},$ $\mathcal{F}_{I,i-1}$ and graphs $D_{i-1},$ $W_{i-1}$ as input that satisfy the required invariants.
	
	\textbf{\hypertarget{stepone}{Step~\rom{1}}:}
	Let $k^{\rom{1}}_i$ be defined as follows.
	\begin{align*}
		k^{\rom{1}}_i \coloneqq \Big( &\Brace{2^{12}\Brace{w+1}^3\Fkt{f_P}{\cliqueToCrossRowGridFunction{t}}+1} \cdot\\ &\Brace{2^6\Brace{w+1}^3\Fkt{f_P}{\cliqueToCrossRowGridFunction{t}}+1} \Big)^{2^{11+8\Fkt{f_P}{\cliqueToCrossRowGridFunction{t}}}\Fkt{f_P}{\cliqueToCrossRowGridFunction{t}}-\Brace{i-1}} \cdot d_2
	\end{align*}
	For each of the two possible parametrisations of $W,$ and for every possible choice of $\xi,$ $\xi'\in[w+1],$ we consider the \hyperref[def:tiling]{tiling} $\tiling\coloneqq\tiling_{W_{i-1},k^{\rom{1}}_i,w,\xi,\xi'}$ together with its four-colouring $\Set{\Class_1,\dots,\Class_4}.$
	For each $j\in[4],$ we consider the smallest slice $W'$ of $W$ that contains all vertices which belong to some tile of $\tiling.$
	Consider $\Fkt{D_j^{\rom{1}}}{W'}$ to be the auxiliary digraph of type \rom{1} obtained from $D_{i-1}$ with the sets
	\begin{align*}
		X_{\rom{1}}^{\text{out}}&\coloneqq\CondSet{x_T^{\text{out}}}{T\in\Class_j}\text{, and}\\
		X_{\rom{1}}^{\text{in}}&\coloneqq\CondSet{x_T^{\text{in}}}{T\in\Class_j}.
	\end{align*}
	Additionally, we construct the set $Y_{\rom{1}}$ as follows:
	Let $Q$ and $Q'$ be the two cycles of $\per{\widetilde{W}_2}.$
	For every $j\in\left[\frac{3d_1}{4} \right],$ $Y_{\rom{1}}$ contains exactly one vertex of $Q\cap \outPath_{4j},$ $Q\cap \inPath_{4j+2},$ $Q'\cap \outPath_{4j},$ and $Q'\cap \inPath_{4j+2}$ each.
	Then, remove all vertices of $Y_{\rom{1}}$ that do not belong to $D_{i-1}.$
	Note that, by choice of $d_1$ and the bound on $F_{\rom{1},i-1},$ this does not significantly decrease the size of $Y_{\rom{1}}.$
	
	Then, if there is a family of $2^7\Fkt{f_P}{\cliqueToCrossRowGridFunction{t}}$ pairwise disjoint directed $X_{\rom{1}}^{\text{out}}$-$Y_{\rom{1}}$-paths in $\Fkt{D_j^{\rom{1}}}{W'},$ \cref{lem:longjumps1} implies the existence of a $\CrossRowGrid{t}$ butterfly minor, a contradiction.
	So, we may assume that there does not exist such a family and thus, by \cref{thm:directedlocalmenger}, we find a set $Z_1\subseteq\V{\Fkt{D_j^{\rom{1}}}{W'}}$ of size at most $2^7\Fkt{f_P}{\cliqueToCrossRowGridFunction{t}}$ that meets all these paths.
	With a similar argument, we either find a $\CrossRowGrid{t}$ butterfly minor, which would yield a contradiction, or a set $Z_2\subseteq\V{\Fkt{D_j^1}{W'}}$ of size at most $2^7\Fkt{f_P}{\cliqueToCrossRowGridFunction{t}}$ that meets all directed $Y_{\rom{1}}$-$X_{\rom{1}}^{\text{in}}$-paths in $\Fkt{D_j^{\rom{1}}}{W'}.$
	Let $\pi\in[2]$ indicate which of the two parametrisations of $W$ we are currently considering.
	We define the following two sets:
	\begin{align*}
		Z_{\pi,\xi,\xi',j}\coloneqq& \Brace{Z_1\cup Z_2}\cap\V{D}\text{, and}\\
		\mathcal{Z}_{\pi,\xi,\xi',j}\coloneqq& \CondSet{\tile\in\tiling}{\Brace{\V{\tile}\cup\Set{x_\tile^{\text{out}},x_\tile^{\text{in}}}}\cap\Brace{Z_1\cup Z_2}\neq\emptyset}.
	\end{align*}
	Note that $\max\Set{\Abs{Z_{\pi,\xi,\xi',j}},\Abs{\mathcal{Z}_{\pi,\xi,\xi',j}}}\leq 2^8\Fkt{f_P}{\cliqueToCrossRowGridFunction{t}}.$
	
	As we do not find a $\CrossRowGrid{t}$ butterfly minor at any point, the sets $Z_{\pi,\xi,\xi',j}$ and $\mathcal{Z}_{\pi,\xi,\xi',j}$ are well defined for every possible choice of $\pi\in[2],$ $\xi,\xi'\in[w+1],$ and $j\in[4].$
	Using these we define the following two sets of marked vertices and tiles:
	\begin{align*}
		F'_{\rom{1},i}\coloneqq& \bigcup_{\pi\in[2]}\bigcup_{\xi,\xi'\in[w+1]}\bigcup_{j\in[4]}Z_{\pi,\xi,\xi',j}\text{, and}\\
		\mathcal{F}'_{\rom{1},i}\coloneqq& \bigcup_{\pi\in[2]}\bigcup_{\xi,\xi'\in[w+1]}\bigcup_{j\in[4]}\mathcal{Z}_{\pi,\xi,\xi',j}.
	\end{align*}
	Consequently, we have
	\begin{equation*}
		\max\Set{\Abs{F'_{\rom{1},i}},\Abs{\mathcal{F}'_{\rom{1},i}}}\leq 2^{11}\Brace{w+1}^2\Fkt{f_P}{\cliqueToCrossRowGridFunction{t}}.
	\end{equation*}
	
	Removing all vertices in $F'_{\rom{1},i}$ and tiles in $\mathcal{F}'_{\rom{1},i}$ yields a clear slice $W'_i\subseteq W_{i-1}$ of width 
	\begin{align*}
		&\Brace{2^{12}\Brace{w+1}^3\Fkt{f_P}{\cliqueToCrossRowGridFunction{t}}+1}^{2^{11+8\Fkt{f_P}{\cliqueToCrossRowGridFunction{t}}}\Fkt{f_P}{\cliqueToCrossRowGridFunction{t}}-i}\cdot\\
		&\Brace{2^6\Brace{w+1}^3\Fkt{f_P}{\cliqueToCrossRowGridFunction{t}}+1}^{2^{11+8\Fkt{f_P}{\cliqueToCrossRowGridFunction{t}}}\Fkt{f_P}{\cliqueToCrossRowGridFunction{t}}-i+1}\cdot d_2
	\end{align*}
	which does not contain a marked vertex.
	Note that we loose the additional factor of $2\Brace{w+1}$ because we remove whole tiles of width $w$ from $W_{i-1}.$
	Let $D'_i \coloneqq D_{i-1}-F'_{\rom{1},i}.$
	So now we have a preliminary version for all the structures we need to start the next round.
	This concludes \hyperlink{stepone}{Step~\rom{1}} of round $i.$
	
	\begin{claim}
		\label{claim:stepIclaim}
		Every long jump $J$ over $W'_i$ in $D'_i$ whose endpoints belong to tiles of different colour contains a vertex of a tile from $\mathcal{F}_{\rom{1},j}\setminus \mathcal{F}_{\rom{1},j-1}$ for every $j\in[i-1],$ and it contains a vertex of a tile from $\mathcal{F}'_{\rom{1},i}.$
	\end{claim}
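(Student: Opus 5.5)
The claim has two parts, and I would handle them separately. The first part is the statement that $J$ meets a tile of $\mathcal{F}_{\rom{1},j}\setminus\mathcal{F}_{\rom{1},j-1}$ for every $j\in[i-1]$. This is inherited from the invariant of the previous round. Because $W'_i\subseteq W_{i-1}$ is a slice and $D'_i\subseteq D_{i-1}$ is a subgraph, I would first check that a long jump over $W'_i$ in $D'_i$ is still a long jump over $W_{i-1}$ in $D_{i-1}$. It is a $\V{W_{i-1}}$-path after possibly extending or shortening it along $W_{i-1}-W'_i$, since its endpoints are in distinct tiles for every choice of $\xi,\xi'$. The invariant for round $i-1$ then applies directly. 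The subtle point is that $J$ may pass through $W_{i-1}-W'_i$. In that case I would take the subpath of $J$ between two consecutive visits to $W_{i-1}$ with endpoints in different tiles. If no such subpath exists, $J$ only touches tiles removed as marked, and those are already in the sets from earlier rounds.

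For the second part, fix a long jump $J$ whose endpoints $u,v$ lie in tiles $\tile_u,\tile_v$ of different colours. Since $J$ is $w$-long, for every choice of parametrisation and of $\xi,\xi'$ the endpoints lie in distinct tiles. Pick the parametrisation $\pi$ and the values $\xi,\xi'$ such that $u$ lies in the centre of a tile of $\tiling_{W_{i-1},k^{\rom{1}}_i,w,\xi,\xi'}$; this is possible by the remark after \cref{def:tiling} that every brick is a centre in one of the $2(w+1)^2$ tilings. Let $j$ be the colour of that tile, so $u$'s tile lies in $\Class_j$ and $v$'s tile does not. In $\Fkt{D_j^{\rom{1}}}{W'}$, prepend $x^{\text{out}}_{\tile_u}$ to $J$. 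From $v$, which lies in a tile of another colour and thus survives in the auxiliary digraph, route inside $W$ through that tile's region, avoiding tiles of $\Class_j$, and continue to $Y_{\rom{1}}$ on $\per{\widetilde{W}_2}$. Vertices of $Y_{\rom{1}}$ removed because they were previously marked are harmless, since only few vertices are removed. This gives an $X_{\rom{1}}^{\text{out}}$-$Y_{\rom{1}}$-path, so it meets $Z_1$. Symmetrically, if $v$ is the centre endpoint, append $x^{\text{in}}$ and prepend a route from $Y_{\rom{1}}$, which gives a path meeting $Z_2$. If the hit lies on $x^{\text{out}}_{\tile_u}$ or inside a tile, that tile is in $\mathcal{Z}_{\pi,\xi,\xi',j}\subseteq\mathcal{F}'_{\rom{1},i}$ and we are done. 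If the hit is a vertex of $J$ itself, then it lies in $F'_{\rom{1},i}$, which is impossible because $J\subseteq D'_i=D_{i-1}-F'_{\rom{1},i}$. A hit on the routing part inside $W$ lies in some tile of the tiling, and that tile is marked by the definition of $\mathcal{Z}$.

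The main obstacle is the last case. The routing path from $v$ to $Y_{\rom{1}}$ lies in $W$ and traverses many tiles, so a hit there marks a tile that $J$ itself need not touch. To get around this, I would make the route stay inside the tile of $v$ until it leaves $W'_i$; since $W'_i$ is clear, any marked tile reached there is either the tile of $v$, which $J$ meets, or lies outside $W'_i$. For the latter I would argue that a separator vertex outside the clear slice cannot block every such route, because many disjoint routes from $v$'s tile to $Y_{\rom{1}}$ exist and $|Z_1|$ is small. Hence some route avoids $Z_1\cup Z_2$, and the forced hit must occur on $J$, on $x^{\text{out}}_{\tile_u}$, or in the tile of $v$. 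The first is excluded as above, and the other two are tiles met by $J$.
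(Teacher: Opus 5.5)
\textbf{Part 2 has a genuine gap.} You prepend $x^{\text{out}}_{\tile_u}$ to $J$ and treat the result as a path of $\Fkt{D^{\rom{1}}_j}{W'}$. But $J$ is only a jump over $W'_i$: its interior avoids $W'_i$, not $W_{i-1}$. So $J$ may pass through internal vertices of colour-$j$ tiles of the tiling of $W_{i-1}$ that lie outside $W'_i$, and exactly these vertices are deleted in the type~\rom{1} auxiliary digraph.

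Moreover, the outcomes you count as success cannot occur. Since $u,v\in\V{W'_i}$, both $\tile_u$ and the tile of $v$ meet the clear slice $W'_i$, so neither is marked and neither lies in $\mathcal{F}'_{\rom{1},i}$. The paper uses exactly this when it says marked tiles ``do not belong to $W'_i$''. Hence, once the tail is rerouted around $Z_1\cup Z_2$, your argument does not yield a marked tile on $J$; it yields a contradiction. What it actually proves is the first step of the paper's proof: $J$ cannot lie in the auxiliary digraph, so $J$ must have internal vertices in $W_{i-1}\setminus W'_i$. That remaining case is where the claim has content, and your construction says nothing about it.

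\textbf{The missing idea is to apply the Step~\rom{1} separation to a sub-jump of $J$, not to $J$ itself.} This is how the paper proceeds. Having excluded the case that $J$ avoids $W_{i-1}$ internally, it follows the tiles of $W_{i-1}$ that $J$ visits, from $\tile_{\mathsf{s}}$ to $\tile_{\mathsf{t}}$. Because these two tiles have different colours, it extracts a subpath $J''\subseteq J$ that is a long jump over $W_{i-1}$ attaching to tiles of different colours, and applies the Step~\rom{1} separator of round $i$ to $J''$. The marked end-tile of $J''$ is then a tile of $\mathcal{F}'_{\rom{1},i}$ that $J$ meets.

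Your rerouting idea is what makes the step ``$J''$ has a marked end-tile'' rigorous, which the paper leaves implicit. A separator hit inside that end-tile marks it, and otherwise one of the many disjoint routes to $Y_{\rom{1}}$ avoids the small separator.

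\textbf{Part 1 follows the paper's route}: you apply the round-$(i-1)$ invariant to a sub-jump of $J$ between tiles of $W_{i-1}$. The fallback case you add is vacuous: if all consecutive visits stayed in one tile, then $u$ and $v$ would share a tile, contradicting longness. Your stated reason for that case would not work anyway. $W_{i-1}\setminus W'_i$ contains many unmarked tiles, and tiles removed in this round belong to $\mathcal{F}'_{\rom{1},i}$ rather than to the earlier layers $\mathcal{F}_{\rom{1},j}\setminus\mathcal{F}_{\rom{1},j-1}$, $j\in[i-1]$, which the claim needs.
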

	
	\begin{claimproof} 
		Suppose $J$ is also a long jump over $W_{i-1}$ in $D_{i-1},$ then, as $J$ still exists in $D'_{i},$ the tile in whose centre $J$ starts, or the tile in whose centre $J$ ends for some choices of $\pi \in[2],$ $\xi,\xi'\in[w+1],$ and $j\in[4],$ are marked and therefore do not belong to $W'_i.$
		
		Hence, $J$ contains some vertex of $W_{i-1}$ as an internal vertex.
		Let $\tile_{\mathsf{s}}$ be the \hyperref[def:tile]{tile} of $W'_i$ in whose centre $J$ starts, and let $\tile$ be the first tile from the same \hyperref[def:tiling]{tiling} of $W_{i-1},$ that $J$ meets after $\tile_{\mathsf{s}}.$
		Let $J'$ be the shortest subpath of $J$ with endpoints in $\tile_{\mathsf{s}}$ and $\tile.$
		Then, $J'$ is a long jump over $W_{i-1}$ in $D_{i-1}.$
		Therefore, by our assumptions on the input of round $i$ of \hyperlink{phaseone}{Phase~\rom{1}}, the first part of our claim is satisfied.
		Moreover, if $\tile$ has a different colour than $\tile_{\mathsf{s}},$ then $\tile$ is marked.
		So assume $\tile$ has the same colour as $\tile_{\mathsf{s}}.$
		Nonetheless, because $\tile_{\mathsf{s}}$ and the \hyperref[def:tile]{tile} $\tile_{\mathsf{t}}$ that contains the endpoint of $J$ in the current tiling have different colours, $J$ contains a directed subpath $J''$ which is a long jump over $W_{i-1}$ and attaches to tiles of different colour.
		Hence our claim follows.
	\end{claimproof}
	
	With this we are ready for \hyperlink{stepone}{Step~\rom{2}} of round $i.$
	
	\textbf{\hypertarget{steptwo}{Step~\rom{2}}:}
	For this step let
	\begin{align*}
		k^{\rom{2}}_i \coloneqq &\Brace{2^{12}\Brace{w+1}^3\Fkt{f_P}{\cliqueToCrossRowGridFunction{t}}+1}^{2^{11+8\Fkt{f_P}{\cliqueToCrossRowGridFunction{t}}}\Fkt{f_P}{\cliqueToCrossRowGridFunction{t}}-i} \cdot\\
		& \Brace{2^6\Brace{w+1}^3\Fkt{f_P}{\cliqueToCrossRowGridFunction{t}}+1}^{2^{11+8\Fkt{f_P}{\cliqueToCrossRowGridFunction{t}}}\Fkt{f_P}{\cliqueToCrossRowGridFunction{t}}-i+1} \cdot d_2.
	\end{align*}
	We are mainly concerned with the digraph $D'_i.$
	In \hyperlink{stepone}{Step~\rom{2}} it suffices to fix one parametrisation of $W$ because the construction of the \hyperref[def:auxiliarydigraphII]{type \rom{2} auxiliary digraph} leaves the complete interior of tiles that are in the same colour class intact instead of only their \hyperref[def:tile]{centres}.
	For every pair of $\xi,\xi'\in[w+1]$ we consider the \hyperref[def:tiling]{tiling} $\tiling\coloneqq\tiling_{W'_i,k^{\rom{2}}_i,w,\xi,\xi'}$ together with a four-colouring $\Set{\Class_1,\dots,\Class_4}.$
	Then, for every $j\in[4]$ we apply \cref{lem:longjumpsphase2}, which, as it cannot yield a $\CrossRowGrid{t},$ produces two sets
	\begin{align*}
		Z^{2}_{\xi,\xi',j}\subseteq{} &\V{D'_i}\text{ of size at most }2^3\Fkt{f_P}{\cliqueToCrossRowGridFunction{t}}\text{, and}\\
		\mathcal{Z}^{2}_{\xi,\xi',j}\subseteq{} &\tiling\text{ of size at most }2^3\Fkt{f_P}{\cliqueToCrossRowGridFunction{t}},
	\end{align*}
	such that every directed $\V{W'_i}$-path whose endpoints belong to different tiles of $\Class_j,$ contains a vertex of some tile in $\mathcal{Z}^{2}_{\xi,\xi',j}.$
	This allows us to form the two following sets of marked vertices and tiles:
	\begin{align*}
		F''_{\rom{1},i}\coloneqq& \bigcup_{\xi,\xi'\in[w+1]}\bigcup_{j\in[4]}Z^2_{\xi,\xi',j}\text{, and}\\
		\mathcal{F}''_{\rom{1},i}\coloneqq& \bigcup_{\xi,\xi'\in[w+1]}\bigcup_{j\in[4]}\mathcal{Z}^2_{\xi,\xi',j}.
	\end{align*}
	As a result we obtain $\max\Set{\Abs{F''_{\rom{1},i}},\Abs{\mathcal{F}''_{\rom{1},i}}}\leq 2^5\Brace{w+1}^2\Fkt{f_P}{\cliqueToCrossRowGridFunction{t}},$ and we are able to produce the two sets $F_{\rom{1},i}$ of marked vertices and $\mathcal{F}_{\rom{1},i}$ of marked tiles, which are passed on to the next round.
	\begin{align*}
		F_{\rom{1},i}\coloneqq{} & F'_{\rom{1},i}\cup F''_{\rom{1},i}\cup F_{\rom{1},i-1}\text{, and}\\
		\mathcal{F}_{\rom{1},i}\coloneqq{} & \mathcal{F}'_{\rom{1},i}\cup \mathcal{F}''_{\rom{1},i}\cup \mathcal{F}_{\rom{1},i-1}.
	\end{align*}
	The bounds on $F_{\rom{1},i}$ and $\mathcal{F}_{\rom{1},i}$ follow from the bounds on $F'_{\rom{1},i}$ and $F''_{\rom{1},i},$ $\mathcal{F}'_{\rom{1},i}$ and $\mathcal{F}''_{\rom{1},i},$ and the assumptions on the input of round $i$ respectively.
	
	The pigeon hole principle allows us to find a clear slice $W_i\subseteq W'_i$ of width
	\begin{align*}
		&\Big(\Brace{2^{12}\Brace{w+1}^3\Fkt{f_P}{\cliqueToCrossRowGridFunction{t}}+1} \cdot\\
		&\Brace{2^6\Brace{w+1}^3\Fkt{f_P}{\cliqueToCrossRowGridFunction{t}}+1}\Big)^{2^{11+8\Fkt{f_P}{\cliqueToCrossRowGridFunction{t}}}\Fkt{f_P}{\cliqueToCrossRowGridFunction{t}}-i} \cdot d_2,
	\end{align*}
	which does not contain a marked vertex, that is, no vertex from $F_{\rom{1},i}$ or $\mathcal{F}_{\rom{1},i}.$
	Similar to \hyperlink{stepone}{Step~\rom{1}}, we loose the additional factor of $2\Brace{w+1}$ because we remove whole tiles of width $w$ from $W'_i.$
	Finally, let $D_i \coloneqq D'_i-F''_{\rom{1},i},$ which concludes \hyperlink{stepone}{Step~\rom{2}} of round $i.$
	
	\begin{claim}
		\label{claim:stepIIclaim}
		Every long jump over $W_i$ in $D_i$ contains a vertex of some tile in $\mathcal{F}_{\rom{1},j}\setminus\mathcal{F}_{\rom{1},j-1}$ for every $j\in[i].$
	\end{claim}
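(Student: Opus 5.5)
We argue by contradiction, following the pattern of \cref{claim:stepIclaim}. Let $J$ be a long jump over $W_i$ in $D_i$. Since $W_i\subseteq W'_i$ and $D_i\subseteq D'_i$, the path $J$ also lives in $D'_i$, and it either is already a long jump over $W'_i$ or contains a subpath that is one. We split according to the colours of the tiles containing the endpoints of $J$. This is done with respect to a fixed tiling $\tiling_{W'_i,k^{\rom{2}}_i,w,\xi,\xi'}$ and its four-colouring; the choice of $(\xi,\xi')$ is free because $J$ is $w$-long for all such choices.

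\textbf{Different colours.} Suppose the start- and end-vertex of $J$ lie in tiles of different colour. Then \cref{claim:stepIclaim} applies directly, possibly after passing to the first subpath of $J$ that is a long jump over $W'_i$ between tiles of different colour. This gives a vertex of a tile from $\mathcal{F}_{\rom{1},j}\setminus\mathcal{F}_{\rom{1},j-1}$ for every $j\in[i-1]$, and also a vertex of a tile from $\mathcal{F}'_{\rom{1},i}\subseteq\mathcal{F}_{\rom{1},i}$. We still need that such a tile is not already in $\mathcal{F}_{\rom{1},i-1}$. If it were, it would be marked before round $i$ and hence disjoint from $W_{i-1}$. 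So we take the first tile met along $J$ from the marking of round $i$; this is exactly the subpath argument used in \cref{claim:stepIclaim}.

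\textbf{Same colour $j$.} Suppose both endpoints of $J$ lie in distinct tiles of the same class $\Class_j$. If $J$ is internally disjoint from $W'_i$, then it is a directed $\V{W'_i}$-path in $D'_i$. Since it survives in $D_i=D'_i-F''_{\rom{1},i}$, it avoids $Z^2_{\xi,\xi',j}$. Hence \cref{lem:longjumpsphase2} forces $J$ to contain a vertex of a tile in $\mathcal{Z}^2_{\xi,\xi',j}\subseteq\mathcal{F}''_{\rom{1},i}$. This tile lies outside $\mathcal{F}_{\rom{1},i-1}$ by the same first-tile argument as above.

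If instead $J$ meets $W'_i$ internally, we cut $J$ at its internal vertices in $W'_i$. Consider the first intermediate tile $\tile$ met after the start tile $\tile_{\mathsf{s}}$. If $\tile$ has a different colour, the initial segment is a long jump between different colours, and the first case applies. If $\tile$ has the same colour, the segment from $\tile_{\mathsf{s}}$ to $\tile$ is a same-colour $\V{W'_i}$-path, handled as just described. For the indices $j\in[i-1]$ we use the input invariant of round $i$, namely that every long jump over $W_{i-1}$ in $D_{i-1}$ meets every earlier marking level. We apply it to a shortest subpath of $J$ that is a long jump over $W_{i-1}$; such a subpath exists because $W_i\subseteq W_{i-1}$ and tiles of $W_i$ are tiles for the coarser tilings.

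\textbf{Main obstacle.} The delicate point is the bookkeeping in the case where $J$ re-enters $W'_i$. The subpath we extract must still be \emph{long}, meaning its endpoints lie in distinct tiles for every choice of $\xi,\xi'$. It must also be internally disjoint from the wall, so that \cref{lem:longjumpsphase2} or \cref{claim:stepIclaim} applies. Moreover, the marked tile it hits must be new to round $i$ rather than lying in $\mathcal{F}_{\rom{1},i-1}$. Clearness of $W_i$ ensures that any tile it touches is unmarked before round $i$. If a subpath ends in an adjacent tile and is therefore not long, we would try to replace it by a later re-entry point. Failing that, we would use clearness to argue that adjacent re-entries stay inside unmarked tiles, so one can always advance along $J$ until reaching a tile at $W$-distance exceeding $w$.
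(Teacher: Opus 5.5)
Your proposal takes essentially the same route as the paper, and it is correct at the same level of rigour as the paper's argument. Both split on the colours of the endpoint tiles, use \cref{claim:stepIclaim} for mixed colours, use the Step~\rom{2} separators from \cref{lem:longjumpsphase2} for same-colour endpoints after cutting $J$ at its first re-entry tile, and get the indices $j\in[i-1]$ from the round-$i$ input invariant applied to a subpath that is a long jump over $W_{i-1}$. The differences are cosmetic: you cut at $W'_i$ instead of $W_{i-1}$, and you obtain the round-$i$ tile directly rather than by contradiction with the clearness of $W_i$. The longness of the extracted subpath, which you flag as your main obstacle, is also simply asserted in the paper's proof.
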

	\begin{claimproof}
		Let $J$ be a long jump over $W_i$ in $D_i,$ and let $\tiling$ be a \hyperref[def:tiling]{tiling} of $\widetilde{W}_2$ defined by $w$ and some $\xi,\xi'\in[w+1]$ such that $J$ starts at the centre of some \hyperref[def:tile]{tile} $\tile_{\mathsf{s}}\in\tiling.$
		Suppose all tiles of $\tiling$ that contain vertices of $J$ belong to the same colour.
		Then, $J$ must have existed during the corresponding part of \hyperlink{stepone}{Step~\rom{2}} of round $i$ and thus either $\tile_{\mathsf{s}}$ or $\tile_{\mathsf{t}}\in\tiling,$ which is the tile that contains the endpoint of $J,$ must have been marked, a contradiction.
		Therefore $J$ must contain at least one tile of a colour different than the one of $\tile_{\mathsf{s}}.$
		Moreover, we may assume $\tile_{\mathsf{s}}$ and $\tile_{\mathsf{t}}$ to be of the same colour as otherwise we would be done by \cref{claim:stepIclaim}.
		Next, suppose $J$ is also a long jump over $W_{i-1},$ then again $J$ would have been considered during \hyperlink{stepone}{Step~\rom{2}} as a long jump connecting two tiles of the same colour and thus $\tile_{\mathsf{s}}$ or $\tile_{\mathsf{t}}$ would have been marked, a contradiction.
		Therefore, $J$ contains a vertex of some tile from $W_{i-1}.$
		Let $J'$ be a shortest subpath from $\tile_{\mathsf{s}}$ to some tile $\tile$ of $W_{i-1},$ then $J'$ is a long jump over $W_{i-1}$ and thus $J$ contains a vertex of some tile of $\mathcal{F}_{\rom{1},j}\setminus \mathcal{F}_{\rom{1},j-1}$ for every $j\in[i-1]$ by our assumptions on the input of round $i.$
		If $\tile$ has a different colour than $\tile_{\mathsf{s}},$ then $\tile$ would have been marked in \hyperlink{stepone}{Step~\rom{1}} of round $i,$ and if $\tile$ shares the colour of $\tile_{\mathsf{s}},$ then it must have been marked in \hyperlink{stepone}{Step~\rom{2}} of round $i.$
		Either way our claim follows.
	\end{claimproof}
	
	From \cref{claim:stepIIclaim} it follows that we satisfy all requirements for the output of round $i$ and thus, round $i$ is complete.
	We continue until we finish round $2^{11+8\Fkt{f_P}{\cliqueToCrossRowGridFunction{t}}}\Fkt{f_P}{\cliqueToCrossRowGridFunction{t}}$ and obtain the following four objects as its output:
	\begin{itemize}
		\item a slice $W_\rom{1}\coloneqq W_{2^{11+8\Fkt{f_P}{\cliqueToCrossRowGridFunction{t}}}\Fkt{f_P}{\cliqueToCrossRowGridFunction{t}}}$ of width $d_2,$
		\item a set $A_{\rom{1}} \coloneqq F_{\rom{1},2^{11+8\Fkt{f_P}{\cliqueToCrossRowGridFunction{t}}}\Fkt{f_P}{\cliqueToCrossRowGridFunction{t}}}$ with
		\begin{equation*}
			\Abs{A_{\rom{1}}} \leq (\cliqueToCrossRowGridFunction{t})^{28}2^{60+2^{10}(\cliqueToCrossRowGridFunction{t})^8},
		\end{equation*}
		\item a digraph $D_\rom{1}\coloneqq D_{2^{11+8\Fkt{f_P}{\cliqueToCrossRowGridFunction{t}}}\Fkt{f_P}{\cliqueToCrossRowGridFunction{t}}} = D - A_{\rom{1}},$ and
		\item a sequence $\mathcal{F}_{\rom{1},1}\subseteq \mathcal{F}_{\rom{1},2}\subseteq\dots\subseteq \mathcal{F}_{\rom{1},2^{11+8\Fkt{f_P}{\cliqueToCrossRowGridFunction{t}}}\Fkt{f_P}{\cliqueToCrossRowGridFunction{t}}}$ such that for every $i\in[2^{11+8\Fkt{f_P}{\cliqueToCrossRowGridFunction{t}}}\Fkt{f_P}{\cliqueToCrossRowGridFunction{t}}]$ every long jump over $W_\rom{1}$ in the graph  $D_\rom{1}$ contains a vertex of some tile in $\mathcal{F}_{\rom{1},i}\setminus\mathcal{F}_{\rom{1},i-1}.$
	\end{itemize}
	This brings us to the final claim of \hyperlink{phaseone}{Phase~\rom{1}}.
	
	\begin{claim}\label{claim:nolongjumps}
		If there is a long jump over $W_\rom{1}$ in $D_\rom{1},$ then there exists a $\CrossRowGrid{t}$ as a butterfly minor in $D.$
	\end{claim}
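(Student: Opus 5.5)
We use the invariant from the output of Phase~\rom{1}. Let $N \coloneqq 2^{11+8\Fkt{f_P}{\cliqueToCrossRowGridFunction{t}}}\Fkt{f_P}{\cliqueToCrossRowGridFunction{t}}$ be the number of rounds, and suppose $J$ is a long jump over $W_\rom{1}$ in $D_\rom{1}$. The final output says that for every $i\in[N]$ the path $J$ contains a vertex of some tile $\tile_i\in\mathcal{F}_{\rom{1},i}\setminus\mathcal{F}_{\rom{1},i-1}$. These difference sets are disjoint, so the $\tile_i$ are $N$ distinct marked tiles, all met by the single path $J$. The plan is to show that one path meeting so many distinct marked tiles, all lying outside the clear slice $W_\rom{1}$, produces many disjoint cross slices together with many untouched rows. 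Then \cref{lem:many_cross_slices_yield_minor} gives $\CrossRowGrid{t}$.

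\textbf{Step 1: pigeonhole the tiles.} Each $\tile_i$ belongs to one of the $2(w+1)^2$ tilings, indexed by parametrisation and $\xi,\xi'$. Within a tiling, all tiles in a fixed row-block lie in one strip of height $2w+2$. The number of possible strips is bounded in terms of $d_1$, which is too many to pigeonhole over. Instead, pigeonhole over the parametrisation and $\xi,\xi'$ only. This gives $N/(2(w+1)^2)$ tiles from a single tiling, still of order $2^{8\Fkt{f_P}{\cliqueToCrossRowGridFunction{t}}}$. Order these tiles as $J$ visits them. A tile is marked because it lies on a separator of long jumps. So between two consecutive visited tiles whose column indices differ by at least $w+1$, the subpath of $J$ between them, extended inside the two tiles to their centres, is a jump of the type handled by \cref{lem:local_jump_yields_cross_slice}. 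Each such jump yields a cross slice in the slice spanned by the two column positions.

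\textbf{Step 2: make the cross slices disjoint and keep $t$ rows free.} The tiles are pairwise distinct and $J$ is a path, so $J$ does not revisit a tile. Among $M$ tiles, either many have column indices pairwise far apart, or many share a bounded column window.
\begin{itemize}
\item In the first case, choose a monotone subsequence of column indices by Erd\H{o}s--Szekeres. This is why the exponential number of rounds is needed: a subsequence of length $\log M\gtrsim \Fkt{f_P}{\cliqueToCrossRowGridFunction{t}}\ge t$ survives. Consecutive pairs along a monotone sequence give column intervals that overlap only at endpoints, so every second interval gives $t$ pairwise disjoint cross slices. Their crosses lie in strips around the used tiles. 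The wall has height $3d_1\gg$ the number of tiles times $(2w+2)$, so $t$ rows avoiding a strip containing all the crosses can be found. If the crosses are spread over many rows, first restrict to tiles in a common window of rows by a further pigeonhole on row position modulo a block size, and lose a polynomial factor.
\item In the second case, the tiles lie in a narrow column window, and many of them are in distinct rows. Subpaths of $J$ between tiles in rows at distance at least $2$ that are internally disjoint from the wall give cross slices directly by \cref{lem:no_jump_over_row_K5} and \cref{lem:no_jump_over_column_K5}. Alternatively, pass to jumps ending in the clear slice $W_\rom{1}$ and apply \cref{lem:perimeter_jump_yields_cross_slice}-type reasoning.
\end{itemize}

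\textbf{Main obstacle.} The hardest step is Step 2: turning one long path through many marked tiles into $t$ \emph{disjoint} cross slices whose crosses all sit in a common strip that misses $t$ rows. Segments of $J$ may wander through the wall, so they are not internally disjoint from $W$. I would first take shortest subpaths of $J$ between consecutive visited tiles, as in the proofs of \cref{claim:stepIclaim,claim:stepIIclaim}, so that each piece is a jump of $W$ with endpoints in tiles. I would then argue disjointness using the chosen monotone order together with the separation given by \cref{lem:no_jump_over_column_K5}. Restricting the row range is the delicate bookkeeping point. The large margin between $d_1$ and the total number of marked tiles is what should make it work.
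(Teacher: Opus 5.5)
Your Step~2 is not a bookkeeping issue. It is the point where the argument has nothing driving it: nothing you use forces $J$ to contain the jumps you want to feed into \cref{lem:local_jump_yields_cross_slice}.

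\begin{itemize}
\item \emph{Marking does not create long hops.} A tile is marked only because it met a Menger separator in some auxiliary digraph, which says nothing about how $J$ enters or leaves it. What is actually known is that $J$ meets a tile from every difference set $\mathcal{F}_{\rom{1},i}\setminus\mathcal{F}_{\rom{1},i-1}$, and that its internal wall vertices lie in marked tiles. So $J$ may run along wall paths inside marked tiles and move between neighbouring ones by short hops whose ends lie on a common brick. Such hops produce no cross.
\item \emph{The cited lemmas do not apply to your subpaths.} \Cref{lem:local_jump_yields_cross_slice,lem:no_jump_over_column_K5,lem:no_jump_over_row_K5} complete the cross with a path through the wall, so they need a path internally disjoint from $W$. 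The subpath of $J$ between two consecutive terms of your Erd\H{o}s--Szekeres subsequence passes through the other visited tiles, so it is not such a path. The genuine $W$-path pieces of $J$ are, but nothing makes them long. Distinct tiles from different tilings may also overlap and be met in the same vertices of $J$, so distinctness alone does not give disjoint pieces.
\item \emph{Row localisation fails.} You control only about $N$ tiles against roughly $d_1/w$ row blocks. No pigeonhole can put them in a common window, and one on row position modulo a block size certainly does not. A hop between distant rows gives a cross whose strip contains both rows, so $t$ such crosses may leave no $t$ consecutive rows free.
\item \emph{Your second case lacks disjointness.} In the bounded-column-window case the crosses sit in overlapping slices, so the disjointness required by \cref{lem:many_cross_slices_yield_minor} is missing.
\end{itemize}

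The missing idea is to use $J$ only as a source of many pairwise disjoint paths, and to obtain the crosses from \cref{lem:directed_long_jumps}. That lemma is exactly the tool that yields column-separated crosses inside one strip of tile height. The paper's route is as follows.

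\begin{itemize}
\item It walks along $J$ and cuts out subpaths $L$ from the last vertex of a tile in $\mathcal{F}_{\rom{1},i}\setminus\mathcal{F}_{\rom{1},i-1}$ to a tile in $\mathcal{F}_{\rom{1},i'}\setminus\mathcal{F}_{\rom{1},i'-1}$, discarding both indices each time. This gives a large family $\mathcal{L}_0$ of pairwise disjoint paths between marked tiles.
\item It pigeonholes colour classes, moves the start tiles into centres of a tier~\rom{2} tiling, and turns half-integral families into linkages with \cref{thm:half_integral}.
\item The resulting linkage meets the hypotheses of \cref{lem:longjumps1} if the paths end in another colour class. If they end in the same class, it meets those of \cref{lem:longjumpsphase2} after discarding the few tiles of $\mathcal{X}$.
\item Those lemmas reroute paths that wander through the wall and pass a family of genuine jumps to \cref{lem:directed_long_jumps}. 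Its outcomes (ii) and (iii) give many tiles in a single strip, each with a jump into the separating neighbouring slice.
\item \Cref{claim:long_jumps_imply_minor} then produces $\CrossRowGrid{t}$ through \cref{lem:local_jump_yields_cross_slice,lem:many_cross_slices_yield_minor}, or via a clique minor.
\end{itemize}

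The exponentially many rounds pay for the index pairing and the repeated halvings along this route, not for an Erd\H{o}s--Szekeres bound.
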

	\begin{claimproof}
		Let $J$ be a long jump over $W_\rom{1}$ in $D_\rom{1}.$
		We fix a parametrisation of $W,$ $\xi,\xi'\in[w+1],$ and $c\in[4]$ such that there exists a \hyperref[def:tile]{tile} $\tile_{\mathsf{s}}\in\tiling\coloneqq\tiling_{W_0,d_1,w,\xi,\xi'}$ of colour $c$ whose centre contains the start-vertex of $J.$
		Let $\tile_{\mathsf{t}}\in\tiling$ be the tile that contains the end-vertex of $J.$
		As $J$ is a long jump, note that $\tile_{\mathsf{s}}\neq \tile_{\mathsf{t}}.$
		We claim that every internal vertex of $J$ that belongs to $W$ belongs to some tile from $\mathcal{F}_{\rom{1},2^{11+8\Fkt{f_P}{\cliqueToCrossRowGridFunction{t}}}\Fkt{f_P}{\cliqueToCrossRowGridFunction{t}}}.$
		This is because otherwise we could find a directed path from the centre of $\tile_{\mathsf{s}}$ to the perimeter of $\widetilde{W_2},$ contradicting the construction in \hyperlink{stepone}{Step~\rom{1}} of \hyperlink{phaseone}{Phase~\rom{1}}, or we would have a directed path between two tiles of the same colour, where both of them are unmarked.
		This second outcome contradicts the construction in \hyperlink{stepone}{Step~\rom{2}} of \hyperlink{phaseone}{Phase~\rom{1}}.
		
		\textbf{Constructing $\mathcal{L}_0.$}
		Now, we create a family $\mathcal{L}_0$ of $2^{9+8\Fkt{f_P}{\cliqueToCrossRowGridFunction{t}}} \cdot \Fkt{f_P}{\cliqueToCrossRowGridFunction{t}}$ pairwise disjoint subpaths of $J$ with the following properties:
		\begin{enumerate}
			\item for every $L\in\mathcal{L}_0,$ let $\tile_{L,1}$ be the tile of $\tiling$ containing $\Start{L}$ and $\tile_{L,2}$ be the tiles of $\tiling$ containing $\End{L},$ then there exist distinct $i_{L,1},$ $i_{L,2}\in [2^{11+8\Fkt{f_P}{\cliqueToCrossRowGridFunction{t}}} \Fkt{f_P}{\cliqueToCrossRowGridFunction{t}}]$ such that $\Start{L}$ is a vertex of a tile from $\mathcal{F}_{\rom{1},i_{L,1}}\setminus\mathcal{F}_{\rom{1},i_{L,1}-1},$ and $\End{L}$ is a vertex of some tile in $\mathcal{F}_{\rom{1},i_{L,2}}\setminus\mathcal{F}_{\rom{1},i_{L,2}-1},$ and
			\item if $L,L'\in\mathcal{L}_0$ are distinct, then $\Set{i_{L,1},i_{L,2}}\cap\Set{i_{L',1},i_{L',2}}=\emptyset.$
		\end{enumerate}
		We do so iteratively.
		We start by initialising $\mathcal{L}_0 = \emptyset$ and $\mathcal{I}_0\coloneqq[2^{11+8\Fkt{f_P}{\cliqueToCrossRowGridFunction{t}}}\Fkt{f_P}{\cliqueToCrossRowGridFunction{t}}]$ and for every subset $\mathcal{I}'\subseteq\mathcal{I}_0,$ we define the family $\mathcal{F}_{\mathcal{I}'}\coloneqq \bigcup_{i\in\mathcal{I}'}\mathcal{F}_{\rom{1},i}\setminus\mathcal{F}_{\rom{1},i-1}.$
		Next, we add new paths to $\mathcal{L}_0$ while taking smaller and smaller subsets of $\mathcal{I}_0.$
		Also, define $t_{L_{0}} \coloneqq \Start{J}.$
		
		Let $q\in[2^{9+8\Fkt{f_P}{\cliqueToCrossRowGridFunction{t}}}\Fkt{f_P}{\cliqueToCrossRowGridFunction{t}}]$ and assume that the paths $L_1,\dots,$ $L_{q-1}$ together with the tiles, indices and the set $\mathcal{I}_{q-1}$ have already been constructed.
		Follow along $J,$ starting from $t_{L_{q-1}},$ until the next time we encounter the last vertex $s_{L_q}$ of some tile from $\mathcal{F}_{\mathcal{I}_{q-1}}$ before $J$ leaves said tile again.
		Let $\tile_{L_q,1}\in\tiling$ be the tile that contains $s_{L_q},$ and let $i_{L_q,1}\in\mathcal{I}_{q-1}$ be the integer such that $s_{L_q}$ belongs to a tile of $\mathcal{F}_{\rom{1},i_{L_q,q}}\setminus \mathcal{F}_{\rom{1},i_{L_q,q}-1}.$
		Then, let $L_q$ be the shortest subpath of $J$ that starts in $s_{L_q}$ and ends in a vertex $t_{L_q}$ which belongs to a tile from $\mathcal{F}_{\rom{1},i_{L_q,2}}\setminus \mathcal{F}_{\rom{1},i_{L_q,2}-1},$ where $i_{L_q,2}\in\mathcal{I}_{q-1}\setminus\Set{i_{L-q,1}}.$
		We choose $\tile_{L_q,2}\in\tiling$ to be the tile that contains $t_{L_q}$ and set $\mathcal{I}_q\coloneqq \mathcal{I}_{q-1}\setminus\Set{i_{L_q,1},i_{L_q,2}}.$
		Note that $t_{L_q}J$ still contains a vertex from some tile in $\mathcal{F}_{\rom{1},j}\setminus\mathcal{F}_{\rom{1},j-1}$ for every $j\in\mathcal{I}_q.$
		Add $L_q$ to $\mathcal{L}_0.$
		
		With every iteration we remove exactly two members from $\mathcal{I}_0$ and, due to $\Abs{\mathcal{I}_0}=2^{11+8\Fkt{f_P}{\cliqueToCrossRowGridFunction{t}}}\Fkt{f_P}{\cliqueToCrossRowGridFunction{t}},$ this means that by the time we reach some $q$ for which $\mathcal{I}_q=\emptyset,$ we have indeed constructed $2^{10+8\Fkt{f_P}{\cliqueToCrossRowGridFunction{t}}}\Fkt{f_P}{\cliqueToCrossRowGridFunction{t}}$ paths as required.
		
		\textbf{Obtaining $\mathcal{L}_5.$}
		There exist $c'\in[4]$ and a linkage $\mathcal{L}_1\subseteq \mathcal{L}_0$ of size $2^{8+8\Fkt{f_P}{\cliqueToCrossRowGridFunction{t}}}\Fkt{f_P}{\cliqueToCrossRowGridFunction{t}}$ such that each path $L\in\mathcal{L}_1$ has at least its start- or end-vertex in $\Class_{c'}.$
		Thus, we can find a linkage $\mathcal{L}_2\subseteq\mathcal{L}_1$ of size $2^{7+8\Fkt{f_P}{\cliqueToCrossRowGridFunction{t}}}\Fkt{f_P}{\cliqueToCrossRowGridFunction{t}}$ such that every path in $\mathcal{L}_2$ starts in a tile of $\Class_{c'},$ or every path in $\mathcal{L}_2$ ends in a tile of $\Class_{c'}.$
		Without loss of generality, we may assume that every path in $\mathcal{L}_2$ starts in a tile of $\Class_{c'},$ because the other case follows with similar arguments.
		
		Let $\widetilde{W}'$ be the smallest slice of $W$ such that $\widetilde{W}'$ contains all tiles from $\Class_{c'},$ but no tile from $\Class_{c'}$ meets the perimeter of $\widetilde{W}'.$
		Then, let $\widetilde{\tiling} \coloneqq \TierIITiling{\tiling,c',w}{\widetilde{W}'}$ be the \hyperref[def:tilingII]{tier \rom{2} tiling} of $\widetilde{W}\coloneqq\InducedSubgraph{\widetilde{W}'}{\tiling,c',w}.$
		Since the paths in $\mathcal{L}_2$ are pairwise disjoint, we can extend each $L\in\mathcal{L}_2$ such that it starts on the centre of the tile of $\widetilde{\tiling}$ which encloses its endpoint in $W,$ while making sure that the resulting family of paths is still at least half-integral.
		Similarly, wherever necessary, we may extend the paths through $W$ such that each of them also ends in a tile of $\widetilde{\tiling}.$
		Indeed, we can even guarantee that the start- and end-vertices of the resulting paths are mutually at $\widetilde{W}$-distance at least four.
		Let $\mathcal{L}_3$ be the resulting half-integral linkage.
		
		Next, consider the four-colouring $\Set{\widetilde{\Class_1},\dots,\widetilde{\Class_4}}$ of $\widetilde{\tiling}.$
		There exists $\widetilde{c}\in[4]$ and a family $\mathcal{L}_4\subseteq\mathcal{L}_3$ of size $2^{5+8\Fkt{f_P}{\cliqueToCrossRowGridFunction{t}}}\Fkt{f_P}{\cliqueToCrossRowGridFunction{t}}$ such that every path in $\mathcal{L}_4$ starts at the centre of some tile from $\Class_{\widetilde{c}}.$
		It follows from the construction of $\mathcal{L}_0$ that no two paths in $\mathcal{L}_4$ start in the same tile.
		
		By a similar argument, there exists a family $\mathcal{L}_5\subseteq\mathcal{L}_4$ of size $2^{4+8\Fkt{f_P}{\cliqueToCrossRowGridFunction{t}}}\Fkt{f_P}{\cliqueToCrossRowGridFunction{t}}$ such that either none, or all paths in $\mathcal{L}_5$ end in tiles of $\widetilde{\Class_{\widetilde{c}}}.$
		
		\textbf{Obtaining $\CrossRowGrid{t}.$}
		If none of the paths in $\mathcal{L}_5$ end in tiles of $\widetilde{\Class_{\widetilde{c}}},$ we can extend every path in $\mathcal{L}_5$ towards the perimeter of $\widetilde{W}$ such that the resulting family $\mathcal{L}_6$ of paths remains at worst half-integral, and the start- and end-vertices of the resulting paths are mutually at $\widetilde{W}$-distance at least four.
		By \cref{thm:half_integral}, we obtain a family $\mathcal{L}_7$ of size $2^{3+8\Fkt{f_P}{\cliqueToCrossRowGridFunction{t}}}\Fkt{f_P}{\cliqueToCrossRowGridFunction{t}}$ such that $\V{\mathcal{L}_7}\subseteq\V{\mathcal{L}_6},$ and the paths in $\mathcal{L}_7$ are pairwise vertex disjoint.
		Hence, \cref{lem:longjumps1} yields the existence of a $\CrossRowGrid{t}$ butterfly minor and our claim follows.
		
		If all of the paths in $\mathcal{L}_5$ end in tiles of $\widetilde{\Class_{\widetilde{c}}},$ we consider two subcases.
		Let $\mathcal{X}$ be the family of all tiles of $\widetilde{\tiling}\setminus\widetilde{\Class_{\widetilde{c}}}$ that contain an internal vertex of some path in $\mathcal{L}_5$ but no start- or end-vertex of any path in $\mathcal{L}_5.$
		
		Recall the following two definitions:
		\begin{enumerate}
			\item If $L$ and $P$ are directed paths, we say that $P$ is a \emph{long jump of $L$} if $P$ is a $w$-long jump over $W$ and $P\subseteq L.$
			\item $P$ is a \emph{jump of $L$}, if $P$ is a directed $\V{W}$-path.
		\end{enumerate}
		
		If $\Abs{\mathcal{X}}\geq 2^8\Fkt{f_P}{\cliqueToCrossRowGridFunction{t}},$ then we can use the technique from the first part of the proof of \cref{lem:longjumps1} to construct a half-integral family $\mathcal{L}_6$ such that
		\begin{enumerate}
			\item $\Abs{\mathcal{L}_6}=2^{4+8\Fkt{f_P}{\cliqueToCrossRowGridFunction{t}}}\Fkt{f_P}{\cliqueToCrossRowGridFunction{t}},$ and
			\item for every $L\in\mathcal{L}_6,$ every endpoint $u$ of a jump of $L$ with $u\in\V{\widetilde{W}}$ belongs to a tile from $\widetilde{\Class_{\widetilde{c}}}\cup\mathcal{X}.$
		\end{enumerate}
		Then, we apply \cref{thm:half_integral} to obtain a linkage $\mathcal{L}_7$ of size $2^{3+8\Fkt{f_P}{\cliqueToCrossRowGridFunction{t}}}\Fkt{f_P}{\cliqueToCrossRowGridFunction{t}}$ with $\V{\mathcal{L}_7}\subseteq\V{\mathcal{L}_6}$ such that the paths in $\mathcal{L}_7$ link the same two sets of vertices as the paths in $\mathcal{L}_6$ do.
		Finally, \cref{lem:longjumps1} yields the existence of a $\CrossRowGrid{t}$ as a butterfly minor.
		
		If $\Abs{\mathcal{X}}<2^8\Fkt{f_P}{\cliqueToCrossRowGridFunction{t}},$ then we find a subwall $\widetilde{W}'$ of $\widetilde{W}$ of order $d_1-2^8\Fkt{f_P}{\cliqueToCrossRowGridFunction{t}}\Brace{2w+1}$ that does not contain a vertex of any tile in $\mathcal{X}.$
		We do this by removing, for every tile $\tile\in\mathcal{X},$ all edges and vertices of the horizontal and vertical paths of $\tile$ that are not used by other cycles or paths.
		For each tile we remove during this procedure, we remove a row and a column of tiles and thereby reduce the number of distinct tiles which contain start-vertices of paths in $\mathcal{L}_5$ by a factor of at most $\frac{1}{2}.$
		However, because $\Abs{\mathcal{X}}<2^8\Fkt{f_P}{\cliqueToCrossRowGridFunction{t}},$ we can still find, after potentially expanding the start and end sections of the paths in $\mathcal{L}_5$ in order to reach the slightly shifted perimeters of their tiles, a half-integral family $\mathcal{L}_6$ of size $2^4\Fkt{f_P}{\cliqueToCrossRowGridFunction{t}}$ of paths that start and end in tiles of $\widetilde{\Class_{\widetilde{c}}}$ and that are otherwise disjoint from $\widetilde{W}'.$
		By applying \cref{thm:half_integral} we can transform this family into a linkage $\mathcal{L}_7$ of size $2^3\Fkt{f_P}{\cliqueToCrossRowGridFunction{t}}$ and thus an application of \cref{lem:longjumpsphase2} yields a $\CrossRowGrid{t}$ as a butterfly~minor.\end{claimproof}
	
	Concluding \hyperlink{phaseone}{Phase~\rom{1}}, \cref{claim:nolongjumps} either yields $\CrossRowGrid{t}$ as a butterfly minor and therefore finishes the proof, or $W_\rom{1}$ is in fact clean, meaning that $W_\rom{1}$ has no long jump in $D_\rom{1}.$
	Consequently we may bound the function $\WallApexFkt{}$ from the statement of \cref{thm:directed_flat_wall} as follows:
	\begin{equation*}
		\WallApexFkt{t} \leq \Brace{\cliqueToCrossRowGridFunction{t}}^{28} \cdot
		2^{61+2^{10}\Brace{\cliqueToCrossRowGridFunction{t}}^8}.
	\end{equation*}
	
	\hypertarget{phasetwo}{\paragraph{Phase~\rom{2}}}
	
	With $W_\rom{1}$ we have found a wall of still sufficient size but without any long jumps.
	Therefore, me may now find $t$ slices of $W_\rom{1},$ mutually still far enough apart from each other within $W_\rom{1},$ and can ask if among them there is one that is flat.
	If so, then we have found the desired flat wall within $W.$
	If not, each of the $t$ slices contains a cross and an application of \cref{lem:many_cross_slices_yield_minor} yields the desired $\CrossRowGrid{t}$ butterfly minor.
	The only technical part that remains is to provide sufficient definitions for these slices and their mutual distance.
	
	To meet the requirements from \hyperlink{phaseone}{Phase~\rom{1}} and have enough space left in $W_\rom{1},$ let us make the following assumption:
	\begin{equation*}
		d_2\geq t\Brace{r+4+2^{32+(3t)^{30}}}.
	\end{equation*}
	
	We partition $W_\rom{1}$ further into smaller slices.
	First we partition $W_\rom{1}$ into $t$ slices $S_i$ of width $r+4+2^{32+(3t)^{30}}.$
	Each $S_i$ is then partitioned into a slice $H_i$ of width $r+4+2^{31+(3t)^{30}}$ that contains the left perimeter cycle of the slice $S_i,$ and a slice $G_i$ of width $2$ containing the right perimeter cycle of $S_i.$
	For every $i\in[t]$ we may now further partition $H_i.$
	Let $N_{i,L}\subseteq H_i$ be the slice of width $2^{30+t^{30}}$ containing the left cycle of $\per{H_i},$ let $N_{i,R}$ be the slice of width $2^{30+t^{30}}$ containing the right cycle of $\per{H_i},$ and let $N_i'\coloneqq H_i-N_{L,i}-N_{R,i}$ be the remaining slice of width $r+4.$
	Finally, let $N_i$ be the slice obtained from $N_i'$ by removing the two leftmost and the two rightmost vertical cycles.
	Then, $N_i$ is a slice of width $r.$
	
	For every $i \in [t]$ we show that if there exists a directed path $P_i$ with one endpoint in $N_i,$ the other endpoint in $W_\rom{1}-N_i',$ and which is internally disjoint from $W_\rom{1},$ then $H_i$ contains a cross slice such that the cross lies in a strip of size at most $2^{30+t^{30}} + 2.$
	In this case, there is a vertical path $Q$ in one of the two components of $N_i'-N_i$ which separates the start- and the end-vertex of $P_i$ in $W_\rom{1}.$
	As there are no long jumps over $W_\rom{1}$ in $D_\rom{1},$ we further know that there exists $Y\in\Set{L,R}$ such that the endpoint of $P_i$ not in $N_i$ lies in $N_{Y,i}$ within a strip of height at most $2^{30+t^{30}} + 2.$
	By \cref{lem:local_jump_yields_cross_slice}, this yields a cross slice within $H_i.$
	Let $J \subseteq [t]$ contain every $i \in [t]$ for that such a $P_i$ exists.
	
	Suppose $J = [t],$ then we can use \cref{lem:many_cross_slices_yield_minor} and obtain a $\CrossRowGrid{t}$ butterfly minor, a contradiction.
	The row condition is met as the strips are all of height at most $2^{30+t^{30}} + 2$ leaving a strip of height at least $t.$
	
	So, there is at least one $i \in [t]$ for which such a path does not exist, let $J' \coloneqq [t]\setminus J.$
	Suppose for every $i \in J'$ the strong component of $D_\rom{1}-\per{N_i'}$ that contains $N_i$ has a cross slice.
	Since there is no long jump over $W_\rom{1},$ these components are pairwise disjoint and also disjoint from the cross slices found in the $S_j,$ $j\in J,$ and thus we can again apply \cref{lem:many_cross_slices_yield_minor} to obtain a $\CrossRowGrid{t}$ butterfly minor, a contradiction.
	
	Thus, there must exist some $i\in J'$ for which $N_i'$ has no cross slice.
	In particular, this means that the component of $D_\rom{1}-\per{N_i}$ containing the remaining vertices of $N_i-\per{N_i}$ must be free of cross slices.
	Hence, by \cref{lem:no_cross_slice_implies_flat}, $N_i$ is a flat wall of order $r$ in $D_\rom{1}=D-A$ which completes the proof.
	
	Let us combine all assumptions on the $d_1$ and $d_2$ to obtain the following bound on $\WallFkt{t,r}$:
	\begin{equation*}
		\WallFkt{t,r}\leq\Brace{2^{140}t^{72}}^{2^{10}t^8+2^{12}}\Brace{r+4+2^{32+\Brace{3t}^{30}}}.\qedhere
	\end{equation*} 
\end{proof}

\section{Conclusion}

The presented proof is held close to the ones of the other directed flat wall theorems~\cite{giannopoulou2020directed,giannopoulou2021flat} and thus, it often obtains clique minors, when, with a probably better function, a cross-row grid could be obtained instead.
Therefore, it might be possible to obtain overall better functions by choosing a procedure more specific to the structure of $\CrossRowGrid{t}.$

\bibliographystyle{alphaurl}
\bibliography{literature}
\end{document}